\theoremstyle{plain}
\newtheorem{theorem}{Theorem}[section]
\newtheorem{lemma}[theorem]{Lemma}
\newtheorem{proposition}[theorem]{Proposition}
\newtheorem{corollary}[theorem]{Corollary}
\newtheorem{conjecture}[theorem]{Conjecture}
\theoremstyle{definition}
\newtheorem{definition}[theorem]{Definition}
\theoremstyle{remark}
\newtheorem{remark}[theorem]{Remark}
\numberwithin{equation}{section}
\numberwithin{figure}{section}
\numberwithin{table}{section}
\newcommand{\diff}{\mathop{}\!\mathrm{d}}
\newcommand{\dist}{\mathrm{dist}}
\newcommand{\torus}{\mathbb{T}}
\newcommand{\restrict}{{\restriction}}
\newcommand{\lattice}{\mathbb{L}}
\newcommand{\affine}{\mathbb{A}}
\newcommand{\hamiltonian}{\mathcal{H}}
\newcommand{\schrodinger}{\mathcal{U}}
\newcommand{\galilean}{\mathcal{G}}
\newcommand{\trig}{\mathscr{T}}
\newcommand{\distribution}{\mathscr{D}}
\DeclareMathOperator{\diam}{diam}
\DeclareMathOperator{\proj}{proj}
\DeclareMathOperator{\lsp}{span}
\DeclareMathOperator{\supp}{supp}
\begin{document}

\title[Observability of Schrödinger Propagators]{Observability of Schrödinger Propagators \\
on Tori in Rough Settings}

\author[N.~Burq]{Nicolas Burq}
\address[Nicolas Burq]{Laboratoire de Mathématiques d'Orsay, Université Paris-Saclay}
\email{nicolas.burq@universite-paris-saclay.fr}

\author[H.~Zhu]{Hui Zhu}
\address[Hui Zhu]{New York University Abu Dhabi, Division of Science}
\email{hui.zhu@nyu.edu}

\begin{abstract}
    On tori of arbitrary dimensions, Schrödinger propagators with bounded potentials are conjectured to be observable from space-time domains of positive Lebesgue measure. 
    We reduce this conjecture to certain integrability bounds for free Schrödinger waves, thereby proving the conjecture on the one-dimensional torus and producing new examples of observation domains.
    These bounds are far weaker than Bourgain's conjectured periodic Strichartz estimates, yet remain highly nontrivial.
\end{abstract}

\maketitle


\section{Introduction}

On the $d$-dimensional torus $\torus^d = \mathbb{R}^d / (2\pi \mathbb{Z})^d$, let $V$ be a time-independent real potential and define the Schrödinger operator $\hamiltonian_V = -\Delta + V$, which is assumed to be essentially self-adjoint with domain $C^\infty(\torus^d)$.
Our principal objective is to investigate the observability of the Schrödinger propagator
$\schrodinger_V(t) = e^{-it \hamiltonian_V}$.

\subsection{Observability and Applications}

Recall that, the Schrödinger propagator $\schrodinger_V$ is \emph{observable} from a Lebesgue measurable domain $\Omega \subset \mathbb{R} \times \torus^d$ if for some constant $C_{d,V,\Omega} > 0$ and for all $u_0 \in L^2(\torus^d)$, there holds
\begin{equation}
    \label{eq::observability}
    \|u_0\|_{L^2(\torus^d)}
    \le C_{d,V,\Omega} \|\schrodinger_V u_0\|_{L^2(\Omega)}.
\end{equation}
When $V \in L^\infty(\torus^d)$ and $\Omega$ is nonempty and open, the observability is conjectured to hold \cite{BurqZworski2012-2d-smooth,BourgainBurqZworski2013rough-potential,Anantharaman-Macia2014measure}.
We extend this conjecture to domains with positive Lebesgue measure.

\begin{conjecture}
    \label{conj::L-infty-observability}
    On a torus of arbitrary dimension, Schrödinger propagators with bounded potentials are observable from space-time domains of positive measure.
\end{conjecture}

Our main result is a criterion for observability (Theorem~\ref{thm::main-general}), which we view as a significant step toward fully resolving this conjecture.
Before stating this criterion in its full generality, we give one of its consequences in the following theorem.

\begin{theorem}
    \label{thm::main-cartesian}
    Let $A$ be the \emph{closed subalgebra} of $L^\infty(\mathbb{R} \times \torus^d)$ (i.e., a closed subspace stable under multiplication) generated by all elements that are either
    \begin{enumerate}[label=(\roman*)]
        \item locally Riemann integrable (in particular, continuous), or
        \item dependent on at most two of the space-time variables $t,x_1,\ldots,x_d$.
    \end{enumerate}
    The Schrödinger propagator $\schrodinger_V$ is observable from $\Omega$ if
    \begin{equation}
    \label{eq::cdt-V-Omega-A}
        V \in A,
        \quad
        \bm{1}_\Omega \in A \setminus \{0\}.
    \end{equation}
    In particular, Conjecture~\ref{conj::L-infty-observability} holds on the one-dimensional torus.
\end{theorem}

Throughout the paper, including in the assumption \eqref{eq::cdt-V-Omega-A}, we regard spatial functions such as $V$ as time-independent space-time functions.

An example of $\Omega$ satisfying $\bm{1}_\Omega \in A \setminus \{ 0 \}$ is a \emph{Cartesian product} of subsets that are either open or at most two-dimensional.
Precisely, write $\mathbb{R} \times \torus^d = \prod_{\ell = 0}^m E_\ell$ with $E_\ell$ being products of factors among $\mathbb{R}_t$ and $\torus_{x_j}$ ($1 \le j \le d$) and choose $\omega_\ell \subset E_\ell$ with $|\omega_\ell| > 0$.
Assume that in addition $\omega_0$ is open and $\dim E_\ell \le 2$ for $\ell \ge 1$.
Then one may take
\begin{equation*}
    \Omega = \prod_{0 \le \ell \le m} \omega_\ell.
\end{equation*}

\subsubsection{Exact Controllability}

An immediate application of Theorem~\ref{thm::main-cartesian} arises in the control theory.
Recall that, the propagator $\mathcal{U}_V$ is called \emph{exactly controllable} from $\Omega$ within time $T>0$ if for all initial and final states $u_0,u_1 \in L^2(\torus^d)$, there exists an exterior force $F \in L^2(\mathbb{R} \times \torus^d)$ which is supported in $\overline{\Omega}$, such that it sends $u\restrict_{t=0} = u_0$ to $u\restrict_{t=T} = u_1$ via the forced (i.e., inhomogeneous) Schrödinger equation
\begin{equation*}
    i\partial_t u = \hamiltonian_V u + F.
\end{equation*}

By the Hilbert uniqueness method (Lions \cite{Lions1988control-v1,Lions1988control-v2,Lions1988control-english}), the exact controllability of $\mathcal{U}_V$ from $\Omega$ within time $T$ is equivalent to its observability from $\Omega \cap ([0,T] \times \torus^d)$.
As a consequence of Theorem~\ref{thm::main-cartesian}, one has the following corollary.

\begin{corollary}
\label{cor::control-examples}
    Let $T>0$. The following controllability results hold:
    \begin{enumerate}
        \item If $d = 1$, then Schrödinger propagators of bounded potentials are exactly controllable from $\Omega$ within time $T$ provided that $|\Omega \cap ([0,T] \times \torus^d)| > 0$.
        \item If $d = 2$, then Schrödinger propagators of bounded potentials are exactly controllable from $\Omega$ within time $T$ provided that $\Omega = J \times \omega$ where $J \subset \mathbb{R}$ and $\omega \subset \torus^2$ are measurable subsets satisfying $|J \cap [0,T]| > 0$ and $|\omega| > 0$.
        \item If $d \ge 3$ and $V \in A$, then the Schrödinger propagator $\schrodinger_V$ is exactly controllable from $\Omega$ within time $T$ provided that $\Omega = J \times \omega$ where $J \subset \mathbb{R}$ is a measurable subset satisfying $|J \cap [0,T]| > 0$ and $\omega \subset \torus^2$ is a nonempty open subset.
    \end{enumerate}
\end{corollary}

These statements, while not fully exploiting the strength of our criterion, provide the first controllability results for Schrödinger propagators in general dimensions from space-time measurable domains.
We will extend it to broader geometric settings in a forthcoming work \cite{BZ-control-temporal}.
Earlier studies in this direction either considered only Cartesian products of time intervals with measurable spatial sets \cite{BourgainBurqZworski2013rough-potential}, or focused primarily on the heat equation (see, e.g., \cite{AEWZ2014heat-measurable,BurqMoyano2023heat}).
A recent and independent work \cite{NWX2025kdv-control} addresses a related problem for dispersive equations on the one-dimensional torus.

\subsubsection{Eigenfunctions and Quantum Limits}

A closely related topic is the observability for eigenfunctions.
We say that the Schrödinger operator $\hamiltonian_V$ is \emph{observable} from a domain $\omega \subset \torus^d$ if for some $C_{d,V,\omega} > 0$ and for every its eigenfunction $f \in L^2(\torus^d)$, there holds
\begin{equation}
\label{eq::observability-eigen}
    \|f\|_{L^2(\torus^d)} \le C_{d,V,\omega} \|f\|_{L^2(\omega)}.
\end{equation}

Notice that, if the propagator $\schrodinger_V$ is observable from $J \times \omega$ for some $J \subset \mathbb{R}$ of positive measure, then the operator $\hamiltonian_V$ is observable from $\omega$.
The following conjecture is thus weaker than Conjecture~\ref{conj::L-infty-observability}, yet remains open.

\begin{conjecture}
\label{conj::L-infty-observability-eigen}
    On tori of arbitrary dimensions, Schrödinger operators with bounded potentials are observable from domains of positive measure.
\end{conjecture}

Also notice that, if $V=0$ and $J$ is an interval of length $2\pi$, then an orthogonality argument establishes the equivalence between these two types of observability.
Indeed, letting $\Pi_n$ denote the projection on the eigenspace of the free Schrödinger operator $\mathcal{H}_0 = -\Delta$ corresponding to eigenvalue $n \in \mathbb{N}$, Parseval's identity yields
\begin{equation*}
    \|u\|_{L^2(J \times \omega)}^2
    = \int_\omega \int_J \biggl| \sum_{n \ge 0} e^{int} \Pi_n u_0 \biggr|^2 \diff t\diff x
    = 2\pi \sum_{n \ge 0} \|\Pi_n u_0\|_{L^2(\omega)}^2.
\end{equation*}
Surprisingly, this fact seems to have been overlooked in the literature.
    
The following corollary of Theorem~\ref{thm::main-cartesian} extends the results of Zygmund \cite{Zygmund1972cantor-lebesgue} (for $d\le 2$ and $V=0$) and of Connes \cite{Connes1976} (for $V=0$ and open $\omega$).

\begin{corollary}
\label{cor::main-cartesian-eigen}
    The Schrödinger operator $\hamiltonian_V$ is observable from $\omega$ if
    \begin{equation}
    \label{eq::cdt-V-omega-cartesian}
        V \in A, \quad \bm{1}_\omega \in A \setminus \{0\}.
    \end{equation}
    In particular, Conjecture~\ref{conj::L-infty-observability-eigen} holds on tori of dimension at most two.
\end{corollary}

A byproduct of our proof is an observability result for quantum limits.
By definition, a quantum limit associated with $\hamiltonian_V$ is a Radon measure $\mu$ on $\torus^d$ obtained as a weak-$*$ limit of measures $|f_j|^2 \diff x$, where $(f_j)_{j \ge 0}$ is an $L^2$-normalized sequence of eigenfunctions to $\hamiltonian_V$ with increasing eigenvalues.
This means, for all $\phi \in C^0(\torus^d)$, there holds
\begin{equation*}
    \lim_{j \to \infty} \int_{\torus^d} \phi |f_j|^2 \diff x = \int_{\torus^d} \phi \diff \mu.
\end{equation*}
Recall that, by Bourgain \cite{Bourgain1997lattice} (see also \cite{Jakobson1997quantum,Aissiou2013QL}), any quantum limit associated with the free Schrödinger operator is absolutely continuous with respect to the Lebesgue measure on the corresponding torus.
An abstract argument by Burq \cite{Burq2013inhomogeneous} extends this result to Schrödinger operators with bounded potentials.
Therefore, if $V \in L^\infty(\torus^d)$ and if $\mu$ is a quantum measure associated with $\hamiltonian_V$, then $\diff \mu = f \diff x$ for some density $f \in L^1(\torus^d)$.

We show that, in the above limit, one may replace $\phi$ with any time-independent element in the algebra $A$.
Consequently, Corollary~\ref{cor::main-cartesian-eigen} implies the following observability estimate for quantum limits.

\begin{theorem}
    If $V$ and $\omega$ satisfy \eqref{eq::cdt-V-omega-cartesian}, then there exists $C_{d,V,\omega} > 0$ such that, for any quantum limit $\mu$ with density $f$ associated with $\hamiltonian_V$, there holds
    \begin{equation*}
        \mu(\omega) 
        = \int_{\torus^d} f \bm{1}_\Omega \diff x
        \ge C_{d,V,\omega}.
    \end{equation*}
    In particular, when $d \le 2$, this estimate holds for all bounded potentials and for all domains with positive measure.
\end{theorem}

One naturally asks if the above estimate remains valid when $\bm{1}_\Omega \diff x$ is replaced with a Borel measure $\nu$ on $\torus^d$.
In our recent collaboration with Germain and Sorella \cite{BGSZ-trace-observability}, this question is given an affirmative answer when $\nu$ admits certain Fourier decay estimates.

\subsubsection{Cantor--Lebesgue Theorems}

Another motivation of studying observability estimates lies in their connection with \emph{Cantor--Lebesgue} type theorems.

One general formulation of such theorems states that, for a suitable sequence $(f^n)_{n \ge 0}$ of square-integrable functions on a finite measure space (the simplest example being a sequence of Laplacian eigenfunctions on tori), its local pointwise convergence to zero implies its strong $L^2$-convergence to zero.
It turns out that, simple application of Egorov's theorem, this statement is equivalent to the observability of the sequence from subsets of positive measure (Cooke \cite{Cooke1979CL-review}).
Specifically, the following statements are equivalent :
\begin{enumerate}[label=(\roman*)]
    \item For every subset $\omega$ of positive measure, the sequence is observable from $\omega$.
    This means, for some $C_\omega > 0$ and for all $n \ge 0$, there holds
    \begin{equation*}
        \|f^n\|_{L^2} \le C_\omega \|f^n\|_{L^2(\omega)}.
    \end{equation*}
    \item For every subset $\omega$ of positive measure, if the sequence converges pointwisely to zero on $\omega$, then it converges strongly to zero in $L^2$.
\end{enumerate}
The following corollary follows from this equivalence and Theorem~\ref{thm::main-cartesian}.

\begin{corollary}
    The following Cantor--Lebesgue type statements hold:
    \begin{enumerate}
        \item Suppose that $d\le 2$ and $V \in L^\infty(\torus^d)$.
        If $(f^n)_{n \ge 0}$ is a sequence of eigenfunctions to $\hamiltonian_V$ which pointwisely converges to zero on a domain of positive measure, then it converges to zero strongly in $L^2(\torus^d)$.
        \item Suppose that $d=1$ and $V \in L^\infty(\torus)$.
        If $(f^n)_{n \ge 0}$ is a sequence in $L^2(\torus)$ such that $(\schrodinger_V f^n)_{n \ge 0}$ converges to zero on a space-time domain of positive measure, then $(f^n)_{n \ge 0}$ converges to zero strongly in $L^2(\torus)$.
    \end{enumerate}
\end{corollary}

In this corollary, the first statement extends Zygmund's on the Cantor--Lebesgue theorem for Laplacian eigenfunctions on the two-dimensional torus \cite{Zygmund1972cantor-lebesgue} to Schrödinger operators with bounded potentials.
In fact, our main criterion shows that it suffices for the potentials to be slightly better than square integrable.

The second statement, when applied to the free Schrödinger propagator, can also be deduced from Zygmund's proof, though this observation appears to be absent from the literature.
It also yields a uniqueness result, which states that a complex-valued sequence $(c_k)_{k \in \mathbb{Z}}$ vanishes identically if the set of all $(t,x) \in \mathbb{R} \times \mathbb{T}$ where the following limit of partial sums holds, has positive measure:
\begin{equation*}
    \lim_{n \to \infty} \sum_{|k| \le n} c_k e^{i (kx + t k^2)} = 0.
\end{equation*}

\subsection{Criterion for Observability}
\label{sec::main-result-intro-criteria}

We now present stronger criteria than Theorem~\ref{thm::main-cartesian} and Corollary~\ref{cor::main-cartesian-eigen}.
The most general theorem is deferred to \S\ref{sec::main-results} since it involves further definitions and notations.

\subsubsection{Criterion for Schrödinger Waves}

Replacing $\bm{1}_\Omega \diff x \diff t$ with $|\chi|^2 \diff x \diff t$ where $\chi \in L^2(\mathbb{R} \times \torus^d) \setminus \{0\}$, we extend the observability \eqref{eq::observability} to a more general estimate:
\begin{equation}
    \label{eq::observability-generalized}
    \|u_0\|_{L^2(\torus^d)} \le C_{d,V,\chi} \bigl\|\chi \schrodinger_V u_0\bigr\|_{L^2(\mathbb{R} \times \torus^d)}.
\end{equation}
We say that the Schrödinger propagator $\schrodinger_V$ is \emph{$\chi$-observable} if this estimate holds.

Since we aim to establish the observability from all domains of positive measure, we may assume, without losing generality, that $\chi$ satisfies the support condition
\begin{equation}
\label{eq::cdt-chi-supp}
    \supp \chi \subset (-\pi,+\pi) \times \torus^d.
\end{equation}
This assumption allows us to extend $\chi$ periodically to a function in $L^2(\torus_t \times \torus^d_x)$, and we will identify $\chi$ with this periodic extension when it does not create ambiguity.
Here and in the remaining of the paper, for the sake of clarity, we use the notations $\torus_t$ for the temporal torus, and $\torus^d_x$ for the spatial torus $\torus^d$ used previously.

From existing results (which will be reviewed in \S\ref{sec::history}), the observability estimate \eqref{eq::observability-generalized} holds when $V$ and $\chi$ are both continuous.
We will extend this statement to cases where $V$ and $\chi$ are well-approximated by continuous functions.

To quantify this approximation, let
\begin{equation*}
    \mathscr{S}
    = \schrodinger_0 (L^2(\torus^d_x))
    = \{ u : t \mapsto e^{it\Delta} u_0 : u_0 \in L^2(\torus^d_x) \}
\end{equation*}
be the space of square-integrable free Schrödinger waves.
Since free Schrödinger waves are time-periodic, we may further regard $\mathscr{S}$ as a subspace of $L^2(\torus_t \times \torus^d_x)$ from which it inherits the complex Hilbert space structure.
Regarding elements of $L^\infty(\torus_t \times \torus^d_x)$ as bounded multiplication operators on $L^2(\torus_t \times \torus^d_x)$, one has continuous embeddings
\begin{equation*}
    C^0(\torus_t \times \torus^d_x) 
    \hookrightarrow L^\infty(\torus_t \times \torus^d_x) 
    \hookrightarrow Y \coloneqq \mathfrak{L}(\mathscr{S},L^2(\torus_t \times\torus^d_x)),
\end{equation*}
where $\mathfrak{L}(\cdot,\cdot)$ denotes spaces of bounded linear operators between complex Banach spaces.
This allows to define the closure of $C^0(\torus_t \times \torus^d_x)$ in $Y$, which is denoted by $Y_0$, i.e.,
\begin{equation*}
    Y_0 = \overline{C^0(\torus_t \times \torus^d_x)}^Y.
\end{equation*}
The space $Y_0$ will replace the role of $A$ in our criterion.

Notice that, if $u_0 \equiv 1$, then $\schrodinger_0 u_0 \equiv 1$.
Therefore $C_d \|\phi \|_Y \ge \|\phi\|_{L^2(\torus_t \times \torus^d_x)}$ for some $C_d > 0$ and for all $\phi \in C^0(\torus_t \times \torus^d_x)$.
This yields the embedding
\begin{equation*}
    Y_0 \hookrightarrow L^2(\torus_t \times \torus^d_x).
\end{equation*}
Also notice that, the spaces of smooth functions $C^\infty(\torus_t \times \torus^d_x)$ and trigonometric polynomials $\mathscr{T}(\torus_t \times \torus^d_x)$ share the same closure as $C^0(\torus_t \times \torus^d_x)$ in $Y$, i.e., $Y_0$.
Indeed, by the Stone--Weierstrass theorem, they are both dense in $C^0(\torus_t \times \torus^d_x)$ under the $L^\infty$ topology.
Due to the embedding $L^\infty(\torus_t \times \torus^d_x) \hookrightarrow Y$, the same holds true under the $Y$-topology.

We will also relax the integrability condition for the potential $V$.
For this purpose, we define the intervals of Lebesgue exponents:
\begin{equation*}
    I_1 = [1,\infty];
    \quad
    I_2 = (1,\infty];
    \quad
    I_d = [d/2,\infty],\ \forall d \ge 3.
\end{equation*}
Recall from classical elliptic theory that taking $V \in L^q(\torus^d_x)$ with $q \in I_d$ ensures the Schrödinger operator $\hamiltonian_V$ to be essentially self-adjoint.
This condition also guarantees that $\schrodinger_V$ satisfies a unique continuation property from subsets of positive measure (Theorem~\ref{thm::unique-continuation}), a key ingredient in our proof.

Our criterion for the observability of Schrödinger operators states as follows.

\begin{theorem}
    \label{thm::main-general-bis}
    The Schrödinger propagator $\schrodinger_V$ is $\chi$-observable if, for some $q \in I_d$,
    \begin{equation}
    \label{eq::V-chi-condition-almost}
        V \in L^q(\torus^d_x) \cap Y_0,
        \quad
        \chi \in Y_0 \setminus \{0\}.
    \end{equation}
\end{theorem}

By this theorem, all the aforementioned results, including the applications to control theory and quantum limits, remain valid when the assumptions \eqref{eq::cdt-V-Omega-A} are relaxed to \eqref{eq::V-chi-condition-almost} (i.e., when $A$ is replaced with $Y_0$).
In view of the assumption \eqref{eq::V-chi-condition-almost}, we propose the following conjecture, which is stronger than Conjecture~\ref{conj::L-infty-observability} and hence also Conjecture~\ref{conj::L-infty-observability-eigen}.

\begin{conjecture}
    \label{conj::Y-sup-L-infty}
    For all $d \ge 1$, there holds
    \begin{equation}
    \label{eq::Y_0-contains-L-infty}
        Y_0 \supset L^\infty(\torus_t \times \torus^d_x).
    \end{equation}
\end{conjecture}

It follows from Zygmund \cite{Zygmund1974Fourier} that if $d=1$ then $\mathscr{S} \hookrightarrow L^4(\torus_t \times \torus_x)$, which then implies $L^4(\torus_t \times \torus_x) \hookrightarrow Y_0$ by Hölder's inequality.
Consequently, this conjecture holds on the one-dimensional torus.
Even though it remains open in higher dimensions, we have the following theorem, which immediately implies Theorem~\ref{thm::main-cartesian}.

\begin{theorem}
\label{thm::Y_0-algebra-subset}
    The space $Y_0 \cap L^\infty(\torus_t \times \torus^d_x)$ is a closed subalgebra of $L^\infty(\torus_t \times \torus^d_x)$ and contains all bounded function on $\torus_t \times \torus^d_x$ that are either Riemann integrable or dependent on at  most two of the space-time variables $t, x_1,\ldots,x_d$.
\end{theorem}

The fact that bounded functions depending on at most two variables belong to $Y_0$ is a natural extension of Zygmund's estimate \cite{Zygmund1974Fourier} to higher dimensions (Lemma~\ref{lem::Xb-embedding-one-two}), while functions depending on three or more variables remain unaccounted for, due to the absence of a similar estimate when the number of variables is at least three.

To show that Riemann integrable functions are also included (see \S\ref{sec::riemann}), we utilize the fact that any semiclassical limit of free Schrödinger waves is absolutely continuous with respect to the Lebesgue measure on the corresponding torus \cite{Bourgain1997lattice,Anantharaman-Macia2014measure,Burq2013inhomogeneous}.

\subsubsection{Criterion for Eigenfunctions}

In parallel to the Schrödinger propagator $\schrodinger_V$, we also define the $\chi$-observability for the Schrödinger operator $\hamiltonian_V$ when $\chi \in L^2(\torus^d_x)$.
This is simply the following estimate which generalizes \eqref{eq::observability-eigen}:
\begin{equation}
    \|f\|_{L^2(\torus^d_x)} \le C_{d,V,\chi} \|\chi f\|_{L^2(\torus^d_x)}.
\end{equation}
By Theorem~\ref{thm::main-general-bis}, if $V$ and $\chi$ meet the assumption \eqref{eq::V-chi-condition-almost}, then $\hamiltonian_V$ is $\chi$-observable.
This leads to the following weaker conjecture than Conjecture~\ref{conj::L-infty-observability-eigen}:
\begin{equation}
\label{eq::Y_0-contains-L-infty-no-time}
    Y_0 \supset L^\infty(\torus^d_x).
\end{equation}

\subsection{Equivalent Formulations}

There exist several equivalent formulations of the conjectured relations \eqref{eq::Y_0-contains-L-infty} and \eqref{eq::Y_0-contains-L-infty-no-time}.
For this purpose, 
\begin{itemize}
    \item let $\mathscr{S}_*$ be the set of all $L^2$-normalized free Schrödinger waves; and
    \item let $\mathscr{E}_*$ be the set of all $L^2$-normalized eigenfunctions of $\Delta$.
\end{itemize}
For any set of functions $S$, we denote $|S|^2 = \{|f|^2 : f \in S\}$.
Also recall that a convex, even, and lower semicontinuous function $\Phi : \mathbb{R} \to [0,\infty)$ is a \emph{strict Young function} if its Legendre transform admits finite values everywhere or, equivalently, if there holds
\begin{equation*}
    \lim_{s\to \infty} |s|^{-1} \Phi(s) = \infty.
\end{equation*}

\subsubsection{Relations with Uniform Integrability}
\label{sec::intro-relation-UI}

We now state these equivalent formulations.

\begin{theorem}
    \label{thm::equivalence-UI-schrodinger}
    The following statements are equivalent:
    \begin{enumerate}
        \item $Y_0 \supset L^\infty(\torus_t \times \torus^d_x)$;
        \item $\mathscr{S}_*$ is uniformly square integrable;
        \item $|\mathscr{S}_*|^2$ is relatively compact with respect to the weak topology $\sigma(L^1,L^\infty)$;
        \item There exists a strict Young function $\Phi$ such that $\Phi(|\mathscr{S}_*|^2) = \{\Phi(|u|^2) : u \in \mathscr{S}_*\}$ is a bounded family in $L^1(\torus_t \times \torus^d_x)$.
    \end{enumerate}
In parallel, the following statements are also equivalent:
    \begin{enumerate}
        \item $Y_0 \supset L^\infty(\torus^d_x)$;
        \item $\mathscr{E}_*$ is uniformly square integrable;
        \item $|\mathscr{E}_*|^2$ is relatively compact with respect to the weak topology $\sigma(L^1,L^\infty)$.
        \item There exists a strict Young function $\Phi$ such that $\Phi(|\mathscr{E}_*|^2) = \{\Phi(|f|^2) : f \in \mathscr{E}_*\}$ is a bounded family in $L^1(\torus^d_x)$.
    \end{enumerate}
\end{theorem}

In each set of formulations, the equivalence between the last three statements follows from the Dunford--Pettis theorem (see e.g., \cite{Bogachev:Book}) and the De La Vallée--Poussin theorem \cite{DeLaValleePoussin1915UI}.
Recall that, by Bourgain \cite{Bourgain1997lattice} and Jakobson \cite{Jakobson1997quantum} (which is essentially also due to Bourgain) (see also \cite{Anantharaman-Macia2014measure,Burq2013inhomogeneous,AissiouJakobsonMacia2012} for cases with potentials), the families $|\mathscr{S}_*|^2$ and $|\mathscr{E}_*|^2$ are relatively compact in with respect to the weak-$*$ topologies (on the spaces of Radon measures), which are weaker than the weak topologies.

\subsubsection{Relations with Classical Integrability Bounds}

The boundedness of $\Phi(|\mathscr{S}_*|^2)$ in $L^1(\torus_t \times \torus^d_x)$ can be regarded as a weaker version of Bourgain's conjectured periodic Strichartz estimate \cite{Bourgain1997lattice}, which claim that, if $0 < p < \frac{2(d+2)}{d}$, then for some $C_{d,p} > 0$ and for all $u_0 \in L^2(\torus^d_x)$, there holds
\begin{equation*}
    \|e^{it\Delta} u_0\|_{L^p(\torus_t \times \torus^d_x)} \le C_{d,p} \|u_0\|_{L^2(\torus^d_x)}.
\end{equation*}
On the other hand, the boundedness of $\Phi(|\mathscr{E}_*|^2)$ in $L^1( \torus^d_x)$ can be regarded as a weaker version of Bourgain's conjectured eigenfunction bounds \cite{Bourgain1993eigenfunction,Bourgain1997lattice} (see also \cite{BourgainDemeter2015decoupling} for later development), which claim that, if $d\ge 2$ and $0 < p < \frac{2d}{d-2}$, then for all every eigenfunction $f$ to the toral Laplacian, there holds
\begin{equation*}
    \|f\|_{L^p(\torus^d_x)} \lesssim_{d,p} \|f\|_{L^2(\torus^d_x)}.
\end{equation*}

Indeed, if any of these estimates hold for some $p > 2$, then the corresponding uniform square integrability follows by choosing $\Phi(s)=|s|^{p/2}$.
Recall that, by Zygmund \cite{Zygmund1974Fourier}, the Strichartz estimate holds holds for $d=1$ and $p=4$ whereas the eigenfunction bound holds for $d=2$ and $p=4$.

\subsection{Strategy of Proof}
\label{sec::strategy-of-proof}

In \S\ref{sec::main-results}, we state our main result (Theorem~\ref{thm::main-general}), which further generalizes Theorem~\ref{thm::main-general-bis}.
Its proof is based on \emph{cluster decomposition} and \emph{dimensional reduction}, inspired by the works of Connes \cite{Connes1976} and Bourgain \cite{Bourgain2014control-3D}.

\subsubsection{Cluster Decomposition}

For eigenfunctions, one uses the cluster structure for integer points on spheres.
This structure dates back to Jarník \cite{Jarnik1925konvexen} and Connes \cite{Connes1976} with later development such as \cite{CillerueloCordoba1992trigo,Bourgain1997lattice,BourgainRudnick2011nodal,BGSZ-trace-observability}.
For Schrödinger waves, one uses the cluster structure for integer points on the parabolic characteristic set, i.e., one considers
\begin{equation}
    \label{eq::def-characteristic-set}
    \Sigma = \{(-|k|^2, k) : k \in \mathbb{Z}^d\} \subset \mathbb{Z} \times \mathbb{Z}^d.
\end{equation}
When $V\ne 0$, one consider a neighborhood of this set.

This structure, due to Granville and Spencer (see e.g., \cite{Bourgain1997lattice,Bourgain1998quasi-periodic,Bourgain2005lattice}), allows us to use the Fourier method.
Nevertheless, our proof is deeply connected to the propagation approach in \cite{Macia2010propagation,Macia2011observability,BurqZworski2012-2d-smooth,BourgainBurqZworski2013rough-potential,Anantharaman-Macia2014measure,AnantharamanMacia2012dynamics}.
We believe that the cluster structure is closely linked to the second microlocal characterization of wave propagation used by Anantharaman and Macià \cite{Anantharaman-Macia2014measure}.
This connection will be further revealed in our forthcoming work in collaboration with Camps and Sun \cite{BCSZ-cluster-microlocalization}.

\subsubsection{Mathematical Induction on Dimensions}

Although inspired by Bourgain's clustering approach \cite{Bourgain2014control-3D} on $\mathbb{T}^3$, we pursue a \emph{different strategy}: we exploit the fact that every cluster, except for a finite number of them, projects onto a lower-dimensional affine sublattice of $\mathbb{Z}^d$.  
This geometric insight enables a dimensional reduction argument and an induction-based proof.

Due to the large separation between clusters, the observability estimate \eqref{eq::observability-generalized} effectively \emph{decouples} when $V$ and $\chi$ are smooth or well-approximated by smooth functions, reducing to estimates on cluster-localized Fourier modes.  

\subsubsection{Modified Uniqueness-Compactness Argument}

The above scheme only yields a \emph{weak observability} since, as previously mentioned, the projections of some clusters may have full space dimension.
To recover the strong observability, we apply the \emph{uniqueness-compactness argument} of Bardos, Lebeau, and Rauch \cite{BardosLebeauRauch1992}.  

Precisely, when $\chi$ is a product of a continuous temporal function and a measurable spatial function, this classical argument applies directly.
Otherwise, \emph{two additional ingredients} are required:
\begin{enumerate}[label=(\roman*)]
    \item To ensure uniformity across affine sublattices, we use \emph{Galilean transformations} to shift high-frequency clusters toward the low-frequency region.
    \item When $\Omega$ is not a product of a time interval and a spatial domain, we appeal to a \emph{temporal holomorphic extension} and the Lusin--Privalov uniqueness theorem which reduce the problem to a classical setting.
\end{enumerate}

\subsection{Historical Review and Comparison}
\label{sec::history}

Let us review previous and recent results on observability and related problems.
We also compare these developments with some results obtained from our criterion (which will be given in \S\ref{sec::main-results}).
See Tables~\ref{table::past-results-eigenfunction} and~\ref{table::past-results-schrodinger}.

\subsubsection{Toral Eigenfunctions}

For observability and Cantor--Lebesgue theorems concerning eigenfunctions of the Laplacian on tori, early results trace back to Cantor \cite{Cantor1870trigono}, Cooke \cite{Cooke1971}, Zygmund \cite{Zygmund1972cantor-lebesgue,Zygmund1974Fourier}, and Connes \cite{Connes1976}.

Bourgain and Rudnick \cite{BourgainRudnick2012restriction} investigated observability from curved hypersurfaces.
With Germain and Sorella \cite{BGSZ-trace-observability}, we extend their results to Borel measures with Fourier decay.

See also \cite{Nazarov1993,FontesMerz2006,Egidi-Veselic2020,Tao2021small,GermainMoyanoZhu2024vanishing} for quantitative results.

\begin{table}[htb!]\small
    \begin{tabular}{c|c|c|c}
        Authors \& References & Dimension $d$  & Potential $V$ & Domain $\omega$ \\
        \hline
        Cantor \cite{Cantor1870trigono} & $=1$ & $= 0$ & $|\omega| > 0$ \\
        Zygmund \cite{Zygmund1972cantor-lebesgue} & $=2$ & $= 0$ & $|\omega| > 0$ \\
        Connes \cite{Connes1976} & $\ge 1$ & $= 0$ & open\\
        Bourgain--Rudnick \cite{BourgainRudnick2012restriction} & $=2,3$ & $= 0$ &  curved hypersurface \\
        Burq--Germain--Sorella--Zhu \cite{BGSZ-trace-observability} & $\ge 1$ & $=0$ &  Fourier decay \\
        \hline
         & $=1$ & $\in L^2$ &  $|\omega| > 0$ \\
        Burq--Zhu & $=2$ & $\in L^{2+}$ &  $|\omega| > 0$ \\
         & $\ge 1$ & $\in L^q \cap Y_0$ &  $\bm{1}_\omega \in Y_0 \setminus \{ 0 \}$ \\
        \hline
    \end{tabular}
    \caption{Observability for toral eigenfunctions.}
    \label{table::past-results-eigenfunction}
\end{table}

\subsubsection{Toral Schrödinger Waves}

For toral Schrödinger propagators, known results are mostly within the context of control theory.

Briefly, Haraux \cite{Haraux1989lacunaires}, Jaffard \cite{Jaffard1990plaque} and Komornik \cite{Komornik1992Petrowsky} used Kahane's theory of lacunary series \cite{Kahane1962lacunary}, while Macià and Anantharaman \cite{Macia2010propagation,Macia2011observability,Anantharaman-Macia2014measure} used microlocal methods.

For bounded $V$ and measurable $\Omega$, known results are restricted to $d \le 3$ and were obtained in collaborated and/or individual works by Bourgain, Burq, and Zworski \cite{BurqZworski2012-2d-smooth,BourgainBurqZworski2013rough-potential,Bourgain2014control-3D,BurqZworski2019rough}, via a combination of the Fourier method and microlocal analysis.

See also \cite{Tao2021small,SuSunYuan2025-1d} for quantitative results.

From Table~\ref{table::past-results-schrodinger}, one sees that our results have covered and further extended all existing results, with two exceptions:
\begin{enumerate}[label=(\roman*)]
    \item In \cite{BourgainBurqZworski2013rough-potential} where $d=2$ and $\Omega$ is open, the observability was established for $V \in L^2_x$, which is weaker than our assumption $V \in L^{2+}_x$.
    While we are considering more general domains that are not necessarily open, we believe it should be possible to relax the integrability condition on the potential to require only square integrability.
    
    One potential approach to addressing this problem is to use the $U^p$ and $V^p$ spaces introduced by Koch and Tataru \cite{KochTataru2007spaces}, rather than Bourgain's Fourier restriction spaces, which is the approach we adopt in this paper.
    However, for the sake of simplicity, we refrain from pursuing this alternative here.
    
    \item In \cite{Bourgain2014control-3D} where $d=3$, the observability was established when $V$ is bounded and $\Omega$ is open.
    Although this case falls within Conjecture~\ref{conj::Y-sup-L-infty}, we do not yet know how to derive it from our criterion.
\end{enumerate}

\begin{table}[htb!]\small
    \begin{tabular}{c|c|c|c}
        Authors \& References & Dimension $d$ & Potential $V$ & Domain $\Omega$ \\
        \hline
        Haraux \cite{Haraux1989lacunaires}, Jaffard \cite{Jaffard1990plaque} & $=2$ & $=0$ & open\\
        Komornik \cite{Komornik1992Petrowsky}, Macià \cite{Macia2010propagation} & $\ge 1$ & $=0$ & open  \\
        Burq--Zworski \cite{BurqZworski2012-2d-smooth} & $=1,2$ & $\in C^\infty$ & open\\
        Anantharaman--Macià \cite{Anantharaman-Macia2014measure} & $\ge 1$ & Riemann integrable & open\\
        Bourgain--Burq--Zworski \cite{BourgainBurqZworski2013rough-potential} & $=1,2$ & $\in L^2$ & open\\
        Bourgain \cite{Bourgain2014control-3D} & $=3$ & $\in L^\infty$ & open \\
        Burq--Zworski \cite{BurqZworski2019rough} & $=1,2$ & $=0$ & $[0,T] \times \omega$, $|\omega| > 0$\\
        \hline
         & $=1$ & $\in L^2$ & $|\Omega| > 0$ \\
        Burq--Zhu & $= 2$ & $\in L^{2+}$ &  $J \times \omega$, $|J| > 0$, $|\omega| > 0$ \\
         & $\ge 1$ & $\in L^q \cap Y_0$ &  $\bm{1}_\Omega \in Y_0 \setminus \{ 0 \}$ \\
        \hline
    \end{tabular}
    \caption{Observability for toral Schrödinger waves.}
    \label{table::past-results-schrodinger}
\end{table}

All the above-mentioned works rely on the following deeply interconnected ingredients which fundamentally stems from the topology of the torus:
\begin{enumerate}
    \item \emph{Lacunarity.} 
    Fourier modes of Schrödinger waves exhibit lacunarity. 
    Estimates akin to Ingham's one-dimensional inequality \cite{Ingham1936inequality} are expected to hold in higher dimensions.
    One of such theories was developed by Kahane \cite{Kahane1962lacunary} and employed in early studies of the free propagator \cite{Haraux1989lacunaires,Jaffard1990plaque}.

    \item \emph{Wave propagation.}
    The use of wave propagation in observability dates back to Rauch and Taylor \cite{RauchTaylor1975decay} and Bardos, Lebeau and Rauch \cite{BardosLebeauRauch1992}.
    Second microlocal versions of this idea have successful applications \cite{Macia2011observability,Anantharaman-Macia2014measure,AFKM2015integrable,BurqGerard2020damping}.

    \item \emph{Dimensional reduction}.
    The reduction of estimates in arbitrary dimensions to the ones in lower dimensions is a particularly useful technique on tori and has been used in (almost) all previously mentioned works.
\end{enumerate}

\subsubsection{General Settings}

The observability was broadly investigated in various dispersive and geometric settings where $V=0$ and $\Omega = [0,T] \times \omega$ with $T > 0$ and $\omega$ open.

On general manifolds, a \emph{geometric control condition} (GCC), which requires that all geodesics intersect $\omega$, is a sufficient and nearly necessary condition for observability of the wave equation; see e.g., \cite{RauchTaylor1975decay,BardosLebeauRauch1992,BurqGerard1997wave,BurqGerard2020damping}.

For the Schrödinger equation, it is sufficient but unnecessary on tori (Table~\ref{table::past-results-schrodinger}) or hyperbolic surfaces (Jin \cite{Jin2018control}); it is almost necessary in cases such as spheres (Lebeau \cite{Lebeau1992schrodinger}).
For fractional Schrödinger equations, GCC is sufficient (Macià \cite{Macia2021fractional}) and seems necessary on tori (see Zhu \cite{Zhu2020controlWW} for $d=2$).

In Euclidean spaces, observability estimates of the Schrödinger propagator hold from certain domains violating GCC (Wunsch~\cite{Wunsch2017periodic} and Täufer \cite{Taeufer2023control-nonGCC}).
When $d=2$, these results were extended to a rough setting (Le Balch'H and Martin \cite{BalchMartin2023obs}).
For related works on nonlinear Schrödinger equations, see e.g., \cite{Laurent2010control,Laurent2010manifold,BeauchardLaurent2010bilinear,IandoliNiu2023control,SilvaEtAl2024control}.

\subsection{Paper Organization and Notations}

In \S\ref{sec::main-results}, we state our criterion for observability in full generality and present several examples derived from it.
In \S\ref{sec::proof-free-propagator} and \S\ref{sec::proof-general-propagator}, we establish the criterion for the free propagator and for the general propagator, respectively.
Finally, in \S\ref{sec::unique-continuation}, we prove a unique continuation result from measurable subsets.

The following notations are used throughout the paper for the simplicity of presentation.
For function spaces on tori, we write
\begin{equation*}
    Z_t = Z(\torus_t),
    \quad
    Z_x = Z(\torus^d_x),
    \quad
    Z_{t,x} = Z(\torus_t \times \torus^d_x),
\end{equation*}
where $Z$ varies from
\begin{enumerate*}[label=(\roman*)]
    \item $C^0$, the space of continuous functions,
    \item $C^\infty$, the space of smooth functions,
    \item $\mathscr{T}$, the space of trigonometric polynomials,
    \item $\mathscr{D}'$, the space of distributions,
    \item $L^p$, the Lebesgue spaces,
    \item $H^b$, the Sobolev spaces, etc.
\end{enumerate*}

We also denote by $Z_{t,+}$, $Z_{x,+}$, and $Z_{t,x,+}$ the corresponding subsets that consist of all nonnegative functions, whenever the notion of nonnegativity is well-defined. 

For any set of parameters $\mathscr{A}$, we write $f \lesssim_{\mathscr{A}} g$ resp.\ $f \propto_{\mathscr{A}} g$ if $f \le C_{\mathscr{A}} g$ resp.\ $f = C_{\mathscr{A}} g$ for some positive and finite constant $C_{\mathscr{A}}$ depending solely on $\mathscr{A}$.
We also write $f \asymp_{\mathscr{A}} g$ if $f \lesssim_{\mathscr{A}} g$ and $g \lesssim_{\mathscr{A}} f$ holds simultaneously.

For any set $S$, we denote by $\sharp S$ its cardinality.

For any Banach spaces $E$ and $F$, we write $E \hookrightarrow F$ if $E$ embeds continuously into $F$.

\subsection{Acknowledgement}

The research of the authors has received funding from the European Research Council (ERC) under the
European Union's Horizon 2020
research and innovation programme (Grant agreement 101097172 -- GEOEDP). 

The authors would like to thank Pierre Germain, Massimo Sorella, Chenmin Sun, Nikolay Tzvetkov, and Claude Zuily for helpful discussions.

\section{Main Results and Examples}
\label{sec::main-results}

In this section, we state our main theorems for observability and their applications.

First, we fix the definition for the space-time Fourier transform:
\begin{equation*}
    \widehat{f}(n,k) = (2\pi)^{-(1+d)/2} \iint e^{-i(nt+k\cdot x)} f(t,x) \diff t \diff x.
\end{equation*}
If $f$ only depends on $t$ resp.\ $x$, then $\widehat{f}$ is identified with its temporal resp.\ spatial Fourier transform.
For $b \in \mathbb{R}$, the (periodic) Bourgain space  $X^b \coloneqq X^{0,b}$ (see e.g., \cite{Bourgain1993fourier-schrodinger,Ginibre1996Bourbaki,Tao2006dispersive}) is the space of all distributions $f \in \distribution_{t,x}'$ such that
\begin{equation*}
    \|f\|_{X^{b}} 
    \coloneqq \|e^{-it \Delta} f(t)\|_{H^b_t L^2_x}
    = \|\langle n + |k|^2 \rangle^b  \widehat{f}(n,k)\|_{\ell^2_n \ell^2_k} < \infty.
\end{equation*}
One clearly has the duality $(X^b)^* \simeq X^{-b}$.
To simplify the presentation, we will also denote $X^\infty = \mathscr{S}$ and denote by $X^{-\infty}$ its dual space.

We also consider bounded linear operators between Bourgain spaces, and denote
\begin{equation*}
    Y^{b,b'} = \mathfrak{L}(X^b,X^{b'}).
\end{equation*}
When $\infty \ge b \ge 0 \ge b' \ge -\infty$, there holds the continuous embeddings
\begin{equation*}
    C^0_{t,x} \hookrightarrow L^\infty_{t,x} \hookrightarrow \mathfrak{L}(L^2_{t,x},L^2_{t,x}) = Y^{0,0} \hookrightarrow Y^{b,b'}.
\end{equation*}
This allows us to define $Y^{b,b'}_0$ and $Y^{b,b'}_{0,+}$ as closures of $C^0_{t,x}$ and $C^0_{t,x,+}$ in $Y^{b,b'}$, that is
\begin{equation*}
    Y^{b,b'}_0 = \overline{C^0_{t,x}}^{Y^{b,b'}},
    \quad
    Y^{b,b'}_{0,+} = \overline{C^0_{t,x,+}}^{Y^{b,b'}}.
\end{equation*}

\subsection{Main Criterion}

We are now ready to state our result in its fullest generality.

\begin{theorem}
    \label{thm::main-general}
    The Schrödinger propagator $\schrodinger_V$ is $\chi$-observable if, for some $q \in I_d$, $b \in (1/2,1)$, and $\epsilon > 0$, there holds
    \begin{equation}
        \label{eq::V-chi-property}
        V \in L^q_x \cap Y^{b,b-1+\epsilon}_0,
        \quad
        |\chi|^2 \in Y^{b,-b}_{0,+} \setminus \{0\}.
    \end{equation}
\end{theorem}

The assumptions \eqref{eq::V-chi-property} are \emph{closed} conditions for they are preserved under limits.
In contrast, the validity of the observability estimate \eqref{eq::observability-generalized} is an \emph{open} condition since it is stable under suitable perturbations of $(V,\chi)$ (particularly in $L^\infty_x \times L^\infty_{t,x}$).
Bridging this gap poses is an important question for future investigation.

In the proof of this theorem, we only require a (slightly) weakened condition for $V$:
\begin{equation}
\label{eq::cdt-V-weaker}
    V \in L^q_x \cap Y^{b,b-1}_0 \cap Y^{b,b-1+\epsilon}.
\end{equation}
As mentioned in \S\ref{sec::main-result-intro-criteria}, one needs $V \in L^q_x$ for a unique continuation result.
The mapping property $V \in Y^{b,b-1+\epsilon}$ is used in a contraction estimate (Lemma~\ref{lem::Duhamel-contraction}).

For the free propagator, we may further relax the condition for $\chi$.

\begin{theorem}
\label{thm::main-free}
    The free Schrödinger propagator $\schrodinger_0$ is $\chi$-observable if
    \begin{equation*}
        |\chi|^2 \in Y^{\infty,-\infty}_{0,+} \setminus \{0\}.
    \end{equation*}
\end{theorem}

When $b \in (1/2,\infty]$, the assumption $|\chi|^2 \in Y^{b,-b}_{0,+}$ (which is used in both of the above theorems) is weaker than the assumption $\chi \in Y_0$ (which is used in Theorem~\ref{thm::main-general-bis}).
More accurately, the former is at most as strong as the latter, with no known examples distinguishing the two conditions.
To see this, we show that, if $b > 1/2$, then
\begin{equation}
\label{eq::Yb0=Y}
    Y^{b,0} = Y,
    \quad
    Y^{b,0}_0 = Y_0.
\end{equation}
Indeed, we use $\| e^{it\Delta} u_0 \|_{X^b} \lesssim_b \| u_0 \|_{L^2_x}$ to derive $Y^{b,0} \subset Y$, and then use a classical transfer argument (detailed in Lemma~\ref{lem::ginibre}) to derive that $Y^{b,0} \supset Y$.
Next, from a $T^*T$ argument, we deduce that, for all $\chi \in L^2_{t,x}$, there holds
\begin{equation*}
    \chi \in Y_0 \iff \chi \in Y^{b,0}_0 \implies |\chi|^2 \in Y^{b,-b}_{0,+}.
\end{equation*}

Next, we show that $Y_0 \cap L^\infty_{t,x}$ is a closed subalgebra of $L^\infty_{t,x}$.
The closeness follows from the embedding $L^\infty_{t,x} \hookrightarrow Y$.
To show the stability under multiplication, let $f,g \in Y_0 \cap L^\infty_{t,x}$, and let $(f_j)_{j \ge 0}, (g_j)_{j \ge 0}$ be sequences in $C^0_{t,x}$ that converge to them with respect to the $Y$-norm.
Then, as $j \to \infty$, there holds
\begin{align*}
    \|f_j g_j - fg\|_Y
    & \le \|(f_j - f) g_j\|_Y + \|f (g_j - g)\|_Y \\
    & \le \|g_j\|_{L^\infty_{t,x}} \|f_j-f\|_Y + \|f\|_{L^\infty_{t,x}} \|g_j - g\|_Y \to 0.
\end{align*}
This shows that that $(f_jg_j)_{j \ge 0}$, as a sequence in $C^0_{t,x}$, converges to $fg$ in $Y$.

In the following sections, we will identify certain subsets of $Y_0$ and prove Theorem~\ref{thm::Y_0-algebra-subset}.

\subsection{Convenability}

By Bourgain \cite{Bourgain1993fourier-schrodinger}, the continuous embedding $\mathscr{S} \hookrightarrow L^\infty_t L^2_x$ extends
to another continuous embedding $X^b \hookrightarrow L^\infty_t L^2_x$ for all $b > 1/2$.
This fact was generalized by Ginibre \cite{Ginibre1996Bourbaki} and will be further generalized below.

\begin{definition}
    A bounded linear mapping $\mathcal{A} : \mathscr{S} \to E \subset\distribution'$ with $E$ being a Banach space is called \emph{convenable} if the following conditions are satisfied:
    \begin{enumerate}
        \item  For all $n \in \mathbb{N}$, there holds $[\mathcal{A},e^{int}] = 0$, with $e^{int}$ regarded as multipliers.
        \item For all $n \in \mathbb{N}$, there holds $\|e^{int}\|_{\mathfrak{L}(E,E)} \lesssim_{d,E} 1$.
    \end{enumerate}
    If $\mathscr{S} \hookrightarrow E$ is convenable, then we also call $E$ convenable.
\end{definition}

In \cite{Ginibre1996Bourbaki}, Ginibre defined the convenability via a stronger condition: $\|\eta\|_{\mathfrak{L}(E,E)} \lesssim_E \|\eta\|_{L^\infty_t}$ for all $\eta \in L^\infty_t$ and proved $X^b \hookrightarrow E$ for $b > 1/2$.
Now we extends this claim.

\begin{lemma}
    \label{lem::ginibre}
    If $\mathcal{A} : \mathscr{S} \to E$ is convenable and $b > 1/2$, then $\mathcal{A}$ extends uniquely to a bounded linear operator $\mathcal{A}_b \in \mathfrak{L}(X^b,E)$ with comparable norm.
    Particularly, if $E$ is a convenable, then $X^b \hookrightarrow E$ for $b > 1/2$.
\end{lemma}

\begin{proof}
    For $u \in X^b$, let $v(t) = e^{-it\Delta} u(t)$ and let $v^n(t) = e^{it\Delta} v^n_0$ where $\sqrt{2\pi} v^n_0(x)$ is the temporal Fourier transform $\mathcal{F}_{t\to n}$ of $v(t,x)$, then
    \begin{equation*}
        u(t,x) = \sum_{n \in \mathbb{Z}} e^{int} v^n(t,x),
        \quad
        v(t,x) = \sum_{n \in \mathbb{Z}} e^{int} v^n_0(x).
    \end{equation*}
    To define $\mathcal{A}_b$, by linearity and the commutator relation $[\mathcal{A},e^{int}] = 0$, the only way of formally defining $\mathcal{A}_b u$ when $u \in X^b$ is by setting
    \begin{equation}
        \label{eq::ginibre-A-extension-def}
        \mathcal{A}_b u
        = \sum_{n \in \mathbb{Z}} e^{int} \mathcal{A} v^n.
    \end{equation}
    To show that this is well-defined, we prove the absolute summability of the series.
    Indeed, using $b > 1/2$ and the definition of Bourgain spaces, we have:
    \begin{align*}
        \sum_{n \in \mathbb{Z}} \|e^{int} \mathcal{A} v^n\|_E
        & \lesssim \|\mathcal{A}\|_{\mathfrak{L}(\mathscr{S}, E)} \sum_{n \in \mathbb{Z}} \|v^n\|_{L^2_{t,x}}
        \asymp \|\mathcal{A}\|_{\mathfrak{L}(\mathscr{S}, E)} \sum_{n \in \mathbb{Z}} \|v^n_0\|_{L^2_x} \\
        & \lesssim_b  \|\mathcal{A}\|_{\mathfrak{L}(\mathscr{S}, E)} \Bigl(\sum_{n \in \mathbb{Z}} \langle n \rangle^{2b} \|v^n_0\|_{L^2_x}^2\Bigr)^{1/2}
        \asymp \|\mathcal{A}\|_{\mathfrak{L}(\mathscr{S}, E)}  \|u\|_{X^b}.
    \end{align*}
    This shows that the extension is well-defined with estimate $\|\mathcal{A}\|_{\mathfrak{L}(X^b,E)} \lesssim_b \|\mathcal{A}\|_{\mathfrak{L}(\mathscr{S}, E)}$.
    Since $\mathscr{S} \hookrightarrow X^b$, the reverse direction of the estimate is trivial.

    Finally, one sees immediately from \eqref{eq::ginibre-A-extension-def} that if $\mathcal{A}: \mathscr{S} \to E$ is an embedding, then the extension $\mathcal{A}_b : X^b \to E$ is also an embedding.
\end{proof}

A motivation of finding convenable spaces is that they yield subspaces of $Y_0$.
In fact, if $E$ is convenable, then $\mathscr{S} \hookrightarrow E$.
Therefore,
\begin{equation*}
    \mathfrak{L}(E,L^2_{t,x})
    \hookrightarrow Y = \mathfrak{L}(\mathscr{S},L^2_{t,x}),
    \quad
    \overline{C^0_{t,x}}^{\mathfrak{L}(E,L^2_{t,x})}
    \hookrightarrow Y_0 = \overline{C^0_{t,x}}^{\mathfrak{L}(\mathscr{S},L^2_{t,x})}.
\end{equation*}
Known examples of convenable spaces include the following:
\begin{enumerate}[label=(\roman*)]
    \item $L^\infty_t L^2_x$ for all $d \ge 1$ (conservation of energy);
    \item $L^4_{t,x}$ for $d=1$ (Zygmund's $L^4$ estimate \cite{Zygmund1974Fourier}), and
    \item $L^4_x L^2_t$ for $d=2$ (Bourgain, Burq, and Zworski \cite{BourgainBurqZworski2013rough-potential}).
\end{enumerate} 
These examples extends to higher dimensions.
To sate these extensions, write
\begin{equation*}
    z = (z_0,z_1,\ldots,z_d) = (t,x_1,\ldots,x_d).
\end{equation*}
For $\alpha \subset \{0,1,\dots,d\}$, let $z_\alpha = (z_j)_{j \in \alpha} \in \torus^\alpha$ and let $\zeta_\alpha = (\zeta_j)_{j \in \alpha} \in \mathbb{Z}^\alpha$ be the corresponding dual (Fourier) variables.

\begin{lemma}
    \label{lem::Xb-embedding-one-two}
    Suppose that $(\sharp \alpha,p,q) = (1,\infty,2)$ or $(\sharp \alpha,p,q) = (2,4,4)$, so that
    $ \frac{1}{p} + \frac{1}{q} = \frac{1}{2}$.
    Next, let $\beta = \{0,1,\ldots,d\} \backslash \alpha$, then $L^p(\torus^\alpha_{z_\alpha};L^2(\torus^{\beta}_{z_\beta}))$ is convenable, and consequently
    \begin{equation}
        \label{eq::Y-embedding-examples}
        L^q(\torus^\alpha_{z_\alpha}; L^\infty(\torus^\beta_{z_\beta}))
        \hookrightarrow Y,
        \quad
        L^q(\torus^\alpha_{z_\alpha}; C^0(\torus^\beta_{z_\beta}))
        \hookrightarrow Y_0.
    \end{equation}
    Therefore, the space $Y_0$ contains all bounded functions that are dependent on at most two of the space-time variables $t,x_1,\ldots,x_d$
\end{lemma}

\begin{proof}
    The first embedding follows from the convenability whereas the second follows from approximation.
    To prove the convenability, we identify
    \begin{equation*}
        \torus^{1+d} = \torus^\alpha \times \torus^\beta, 
        \ z = (z_\alpha,z_\beta);
        \quad
        \mathbb{Z}^{1+d} = \mathbb{Z}^\alpha \times \mathbb{Z}^\beta, 
        \ \zeta = (\zeta_\alpha,\zeta_\beta)
    \end{equation*}
    Recalling the definition of $\Sigma$ in \eqref{eq::def-characteristic-set}, for all $\zeta_\beta \in \mathbb{Z}^\beta$, we let
    \begin{equation*}
        \Sigma(\zeta_\beta) = \{\zeta_\alpha \in \mathbb{Z}^\alpha : (\zeta_\alpha,\zeta_\beta) \in \Sigma\}.
    \end{equation*}

    Let $u \in \mathscr{S}$.
    Since $\supp \widehat{u} \subset \Sigma$, we rearrange its Fourier series and write
    \begin{equation*}
        u(z_\alpha,z_\beta) 
        \propto_d \sum_{\zeta_\beta \in \mathbb{Z}^\beta} e^{i \zeta_\beta \cdot z_\beta} F_{\zeta_\beta}(z_\alpha),
        \quad
        F_{\zeta_\beta}(z_\alpha) 
        = \sum_{\zeta_\alpha \in \Sigma(\zeta_\beta)} e^{i \zeta_\alpha \cdot z_\alpha} \widehat{u}(\zeta_\alpha,\zeta_\beta).
    \end{equation*}
    We claim that 
    $\| F_{\zeta_\beta}\|_{L^p_{z_\alpha}}
    \lesssim \|\widehat{u}(\cdot,\zeta_\beta)\|_{\ell^2(\Sigma(\zeta_\beta))}$.
    Indeed, if $(\sharp \alpha,p) = (1,\infty)$, then this is trivial since $\sharp \Sigma(\zeta_\beta) \le 2$.
    If $(\sharp \alpha,p) = (2,4)$, then this follows from Zygmund's $L^4$ estimate.
    Consequently, by Parseval's theorem and the Minkowski inequality:
    \begin{align*}
        \|u\|_{L^p_{z_\alpha} L^2_{z_\beta}}^2
        & \propto_d \|F_{\zeta_\beta}(z_\alpha)\|_{L^p_{z_\alpha} \ell^2_{\xi_\beta}}^2
        \le \|F_{\zeta_\beta}(z_\alpha)\|_{\ell^2_{\xi_\beta} L^p_{z_\alpha}}^2 \\
        & \lesssim \sum_{\zeta_\beta \in \mathbb{Z}^\beta} \sum_{\zeta_\alpha \in \Sigma(\zeta_\beta)} |\widehat{u}(\zeta_\alpha,\zeta_\beta)|^2
        = \sum_{\zeta \in \mathbb{Z}^{d+1}} |\widehat{u}(\zeta)|^2
        \propto_d \|u\|_{L^2_z}^2.
        \qedhere
    \end{align*}
\end{proof}

\subsection{Lower Dimensional Examples}

In lower dimensions, we have the following examples of subspaces of $Y^{b,b'}$ where $b,b'$ may be contained in the interval $(-1/2,1/2)$.

\begin{lemma}
    If $d=1$, then $L^2_{t,x} \hookrightarrow Y^{3/8,-3/8}_0$.
\end{lemma}
\begin{proof}
    By Bourgain \cite{Bourgain1993fourier-schrodinger} and duality, one has $X^{3/8} \hookrightarrow L^4_{t,x}$ and $L^{4/3}_{t,x} \hookrightarrow X^{-3/8}$.
    Therefore, by Hölder's inequality, for all $\phi \in L^2_{t,x}$, there holds
    \begin{equation*}
        \|\phi\|_{Y^{3/8,-3/8}}
        \lesssim \|\phi\|_{\mathfrak{L}(L^4_{t,x}, L^{4/3}_{t,x})}
        \lesssim \|\phi\|_{L^2_{t,x}}.
        \qedhere
    \end{equation*}
\end{proof}

\begin{corollary}
    When $d=1$, the Schrödinger propagator $\schrodinger_V$ is $\chi$-observable if
    \begin{equation*}
        V \in L^2_x,
        \quad
        \chi \in L^2_{t,x} \setminus \{0\}.
    \end{equation*}
\end{corollary}

\begin{lemma}
    If $d=2$, then for all $b > 1/2$ and $\theta, \theta' \in [0,1]$, there holds
    \begin{equation*}
        L^{4/(\theta+\theta')}_x L^\infty_t \hookrightarrow Y^{b\theta,-b\theta'}.
    \end{equation*}
    Consequently, for all $p > 2$, there exists $b > 1/2$ and $\epsilon > 0$ such that
    \begin{equation*}
        L^p_x \hookrightarrow Y^{b,b-1+\epsilon}_0.
    \end{equation*}
\end{lemma}

\begin{proof}
    Since $b > 1/2$, interpolating between the embedding $X^b \hookrightarrow L^4_x L^2_t$ (by \cite{BourgainBurqZworski2013rough-potential}) and the identity $X^0 = L^2_{t,x}$, and then using duality, one obtains
    \begin{equation*}
        X^{b\theta} \hookrightarrow L^{4/(2-\theta)}_x L^2_t,
        \quad
        L^{4/(2+\theta)}_x  L^2_t \hookrightarrow X^{-b\theta}.
    \end{equation*}
    By Hölder's inequality, for all $\phi \in L^{4/(\theta+\theta')}_x L^\infty_t$, there holds
    \begin{equation*}
        \|\phi\|_{Y^{b\theta,-b\theta'}}
        \lesssim \|\phi\|_{\mathfrak{L}(L^{4/(2-\theta)}_x L^2_t, L^{4/(2+\theta')}_x L^2_t)}
        \lesssim \|\phi\|_{L^{4/(\theta+\theta')}_x L^\infty_t}.
        \qedhere
    \end{equation*}
\end{proof}

\begin{corollary}
    When $d=2$, the Schrödinger propagator $\schrodinger_V$ is $\chi$-observable if
    \begin{equation*}
        V \in L^{2+}_x,
        \quad
        \chi \in L^\infty_t \otimes L^4_x \setminus \{0\}.
    \end{equation*}
\end{corollary}

\subsection{Weak Topologies and Weak Limits}

We now prove Theorem~\ref{thm::equivalence-UI-schrodinger}.
Since the proofs for both sets of statements are essentially the same, we shall only consider the first half of the theorem.
Although the arguments in this section extend to general manifolds, we restrict our discussion to tori for simplicity.

Recall that, a bounded subset $S \subset L^2_{t,x}$ is uniformly square integrable if the family $|S|^2 = \{|f|^2 : f \in S\}$ is uniformly integrable.
Precisely, this means, for all $\epsilon > 0$, there exists $\delta > 0$ such that, for all $f \in S$ and for any $\omega \subset \torus_t \times \torus^d_x$ with $|\omega| < \delta$, there holds
\begin{equation*}
    \||f|^2\|_{L^1(\omega)} = \|f\|_{L^2(\omega)}^2 < \epsilon.
\end{equation*}

\begin{proof}[Proof of Theorem~\ref{thm::equivalence-UI-schrodinger}]
As already mentioned below Theorem~\ref{thm::equivalence-UI-schrodinger}, it remains to show that $\mathscr{S}_*$ is uniformly square integrable if and only if $Y_0 \supset L^\infty_{t,x}$.

First, by a density argument and the definition of the space $Y_0$, if $Y_0 \supset L^\infty_{t,x}$, then any weak-$*$ limit of $|\mathscr{S}_*|^2$ is also its weak limit.
By the Dunford--Pettis theorem, this implies the relative compactness of $K_*$ with respect to the weak topology $\sigma(L^1,L^\infty)$ and thus the uniform integrability of $\mathscr{S}_*$.

Conversely, by the De La Vallée--Poussin theorem, there exists a strict Young function $\Psi$ such that $\|\Psi(|u|^2)\|_{L^1_{t,x}} \le M$ for some finite $M > 0$ and for all $u \in \mathscr{S}_*$.
Let $\chi \in L^\infty_{t,x}$ and let $(\chi_j)_{j \ge 0}$ be a bounded sequence in $C^0_{t,x}$ such that $\chi_j \to \chi$ in $L^2_{t,x}$ as $j \to \infty$.
We claim that $\chi^n \to \chi$ in $Y$ which would conclude the proof.

To show  this, for $R > 0$, let $\delta(R)$ be the supremum of $s/\Psi(s)$ among all $s \ge R$.
The hypothesis on $\Psi$ implies that $\delta(R) = o(1)_{R \to \infty}$.
For all $u \in \mathscr{S}_*$, we have
    \begin{align*}
        \|(\chi_j - \chi) u\|_{L^2_{t,x}}^2
        & = \||\chi_j-\chi|^2 |u|^2 \bm{1}_{|u|^2 \ge R} \|_{L^1_{t,x}} + \||\chi_j-\chi|^2 |u|^2 \bm{1}_{|u|^2 \le R} \|_{L^1_{t,x}} \\
        & \le M \delta(R) \|\chi_j - \chi\|_{L^\infty_{t,x}}^2 + R \|\chi_j - \chi\|_{L^2_{t,x}}^2.
    \end{align*}
    For all $\epsilon > 0$, first fix $R > 0$ such that the first term is $< \epsilon/2$ and choose $N > 0$ such that the second term is also $< \epsilon/2$ when $j > N$, we obtain $\|(\chi_j - \chi) u\|_{L^2_{t,x}}^2 < \epsilon$.
\end{proof}

\subsection{Riemann Integrable Scenario}
\label{sec::riemann}

We now show that $Y_0$ contains all Riemann integrable functions, finishing the proof of Theorem~\ref{thm::Y_0-algebra-subset}.
We need the following approximation lemma.

\begin{lemma}
    If $\chi$ is a Riemann integrable function on $\torus_t \times \torus^d_x$, then there exists a sequence $(\chi_j)_{j \ge 0}$ in $C^0_{t,x}$ and a sequence $(Q_j)_{j \ge 0}$ of compact subsets of $\torus_t \times \torus^d_x$ which decreases to a zero measure set, such that
    \begin{equation*}
        \lim_{j \to \infty} \|\chi_j - \chi\|_{L^\infty_{t,x}(Q_j^c)} = 0.
    \end{equation*}
\end{lemma}
\begin{proof}
    Let $F_{\delta,\epsilon} = \{x \in M : \omega_\delta(x) < \epsilon\}$ for sufficiently small $\delta > 0$ and $\epsilon > 0$, where $\omega_\delta(x)$ denotes the supremum of $|\chi(y) - \chi(x)|$ when $\dist(x,y) < \delta$.
    There exists an open set $G_{\delta,\epsilon}$ such that $F_{4\delta,\epsilon} \subset G_{\delta,\epsilon} \subset F_{\delta,2 \epsilon}$ and $\dist(G_{\delta,\epsilon},F_{\delta,2\epsilon}^c) \ge \delta$.
    Indeed, let $B_x(\delta)$ be the (geodesic) ball of radius $\delta$ centered at $x$.
    Then it suffices to put
    \begin{equation*}
        G_{\delta,\epsilon} = \bigcup_{x \in F_{4\delta,\epsilon}} B_x(\delta).
    \end{equation*}
    Since $\chi$ is Riemann integrable, for all $\epsilon > 0$, there holds $|F_{\delta,\epsilon}^c| \to 0$ as $\delta \to 0$.
    Hence, for $j \ge 0$, there exists $\delta_j > 0$ such that $|F_{4\delta_j,2^{-j}}^c| < 2^{-j}$.
    Let
    \begin{equation*}
        Q_j = \bigcap_{1 \le \ell \le j} G_{\delta_\ell,2^{-\ell}}^c \subset G_{\delta_j,2^{-j}}^c,
    \end{equation*}
    then $(Q_j)_{j \ge 0}$ is a decreasing sequence of compact subsets satisfying $|Q_j| < 2^{-j}$.

    Let $\rho$ be a smooth density (so that it is nonnegative and its integral equals to one) on $\mathbb{R} \times \mathbb{R}^d$ which is in addition supported inside the unit ball.
    Let $\lambda_j > 1/\delta_j$ and define
    \begin{equation*}
        \chi_j(t,x) = \lambda_j^{1+d} \int_{\mathbb{R}} \int_{\mathbb{R}^d} \rho(\lambda_j s,\lambda_j v) \chi \bigl( t-s, x-v \bigr) \diff v \diff s.
    \end{equation*}
    Then clearly $(\chi_j)_{j \ge 0}$ is a bounded sequence in $C^\infty_{t,x}$ which satisfies $\|\chi_j\|_{L^\infty_{t,x}} \le \|\chi\|_{L^\infty_{t,x}}$ and $|\chi_j(x) - \chi(x)| \le 2^{1-j}$ for all $x \in G_{\delta_j,2^{-j}} \supset Q_j^c$.
\end{proof}
    
Now we proceed with the proof.
Let $\chi$ be a Riemann integrable function on $\torus_t \times \torus^d_x$.
Choose $(\chi_j)_{j \ge 0}$ and $(Q_j)_{j \ge 0}$ as in the above lemma.
We claim that $\chi_j \to \chi$ in $Y$ and thus $\chi \in Y_0$.
Indeed, if this is untrue, then there exists $\delta > 0$ a sequence $(u_j)_{j \ge 0}$ in $\mathscr{S}_*$ such that, up to the extraction of subsequence, there holds
\begin{align*}
   \delta 
   \le \|(\chi_j - \chi) u_j\|_{L^2_{t,x}}^2
   &  = \||\chi_j - \chi|^2 |u_j|^2 \bm{1}_{Q_j^c}\|_{L^1_{t,x}} + \||\chi_j-\chi|^2 |u_j|^2 \bm{1}_{Q_j}\|_{L^1_{t,x}}  \\
   & \le \|\chi_j - \chi\|_{L^\infty_{t,x}(Q_j^c)}^2 \|u_j\|_{L^2_{t,x}}^2 + \|\chi_j - \chi\|_{L^\infty_{t,x}}^2 \|u_j\|_{L^2_{t,x}(Q_j)}^2.
\end{align*}
Taking the limit $j \to \infty$, one obtains that, for some $C > 0$, 
\begin{equation*}
    0 < \delta/C 
    \le \liminf_{j \to \infty} \|u_j\|_{L^2_{t,x}(Q_j)}^2.
\end{equation*}

Since $Q_j$ is compact, there exists $\phi_j \in C^0_{t,x}$ such that $\bm{1}_{Q_j} \le \phi_j \le 1$.
One may further assume that $\supp \phi_j$ decreases to a zero measure set as $j \to \infty$.
We further assume that $\phi_j(x)$ is decreasing as $j \to \infty$ for each fixed $x$.
By Bourgain \cite{Bourgain1997lattice}, we may also assume that $|u_j|^2$ converges with respect to the weak-$*$ topology to some $f \in L^1_{t,x}$.

We will now hit a contradiction and conclude the proof.
Indeed, 
\begin{align*}
    0 < \liminf_{j \to \infty} \|u_j\|_{L^2_{t,x}(Q_j)}^2
    & \le \liminf_{j \to \infty} \|\phi_j |u_j|^2\|_{L^1_{t,x}} \\
    & \le \liminf_{\ell \to \infty} \liminf_{j \to \infty} \|\phi_\ell |u_j|^2\|_{L^1_{t,x}}
    = \liminf_{\ell \to \infty} \|\phi_\ell f\|_{L^1} = 0.
\end{align*}

\section{Observability for the Free Propagator}
\label{sec::proof-free-propagator}

In this section, we prove Theorem~\ref{thm::main-free}.
We assume that $\chi \in Y^{\infty,-\infty}_{0,+} \setminus \{0\}$ without further specifications.
First, let us discuss two necessary ingredients described in \S\ref{sec::strategy-of-proof}.

\subsection{Galilean Invariance}
\label{sec::galilean-V=0}

For $p \in \mathbb{R}^d$, define the \emph{Galilean transform} $\mathcal{G}_p$ by
\begin{equation*}
    (\mathcal{G}_p u)(t,x) = e^{i(p\cdot x - |p|^2 t)} u(t,x - 2pt).
\end{equation*}
If $p \in \mathbb{Z}^d$, then $\mathcal{G}_p$ preserves the space-time periodicity and defines an isometry on  $L^2_{t,x}$.
It also restricts to an isometry on $\mathscr{S}$.
In fact, if $u(t) = e^{it\Delta} u_0$, then
\begin{equation}
    \label{eq::galilean-free-propagator-relation}
    e^{it\Delta} (e^{ip\cdot x} u_0(x)) = (\galilean_p u)(t,x).
\end{equation}

For $S \subset \mathbb{Z}^{1+d}$, let $\Pi_S$ be the Fourier projector onto the frequency set $S$.
If $\bm{1}_S$ depends only on $n$ resp.\ $k$ (or simply if $S$ is a subset of $\mathbb{Z}$ resp.\ $\mathbb{Z}^d$), then $\Pi_S$ is identified with a temporal resp.\ spatial Fourier projector.

For $S \subset \mathbb{Z}^{1+d}$, Let $L^2_S = \Pi_S(L^2_{t,x})$.
If $p \in \mathbb{Z}^d$, then the multiplier $e^{i p \cdot x}$ is an isometry between $L^2_S$ and $L^2_{-p + S}$.
Let $\Lambda$ be an sublattice of $\mathbb{Z}^d$.
For any affine sublattice $\Gamma \in \affine_\Lambda \coloneqq \{q + \Lambda : q \in \mathbb{Z}^d\}$ and any $p \in \Lambda^\perp \coloneqq \{k \in \mathbb{Z}^d : k \perp \Lambda\}$, the observation $\|\chi u\|_{L^2_{t,x}}$ is invariant by $\mathcal{G}_p$ provided that $u \in L^2_{\mathbb{Z} \times \Gamma}$.

\begin{lemma}
\label{lem::observation-integral-invariance}
    If $\Gamma \in \affine_\Lambda$, $p \in \Lambda^\perp$, and $u \in L^2_{\mathbb{Z} \times \Gamma}$, then
    \begin{equation*}
        \|\chi u\|_{L^2_{t,x}} = \|\chi \mathcal{G}_p u\|_{L^2_{t,x}}.
    \end{equation*}
\end{lemma}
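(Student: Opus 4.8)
The plan is to reduce the claimed invariance to the Fourier-support statement that $\galilean_p$ acts as a permutation of the relevant frequencies combined with the multiplier $e^{i(p\cdot x - |p|^2 t)}$, so that the values of $u$ and $\galilean_p u$ agree up to a unimodular, \emph{$x$-independent-on-each-cluster} reshuffling — and therefore $\chi$, viewed as a multiplication operator, ``does not see'' the transformation. Concretely, write $u = \sum_{(n,k)} \widehat{u}(n,k) e^{i(nt + k\cdot x)}$ with $\widehat{u}(n,k)$ supported in $\mathbb{Z}\times\Gamma$, i.e.\ $k \in \Gamma = q + \Lambda$ for a fixed coset. From the definition of $\galilean_p$ one computes directly that
\begin{equation*}
    \galilean_p\bigl(e^{i(nt+k\cdot x)}\bigr) = e^{i\bigl((n - 2p\cdot k - |p|^2)t + (k+p)\cdot x\bigr)},
\end{equation*}
so $\galilean_p u$ has Fourier coefficients supported in $\mathbb{Z}\times(p+\Gamma)$, and the map $(n,k)\mapsto (n - 2p\cdot k - |p|^2, k+p)$ is a bijection of $\mathbb{Z}\times\Gamma$ onto $\mathbb{Z}\times(p+\Gamma)$ preserving the $\ell^2$ norm of the coefficients. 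This already shows $\galilean_p$ is an $L^2_{t,x}$-isometry (consistent with the text), but we need the \emph{weighted} identity with $\chi$.

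The key observation is that for $k \in \Gamma$ and $p \in \Lambda^\perp$ one has $p\cdot k = p\cdot q$, a constant independent of $k$ (since $k - q \in \Lambda \perp p$). Hence, for every frequency $(n,k)$ in the support of $\widehat u$, the ``time shift'' $2p\cdot k + |p|^2$ equals the fixed number $c \coloneqq 2p\cdot q + |p|^2$. Therefore, restricted to $L^2_{\mathbb{Z}\times\Gamma}$, the Galilean transform factors as
\begin{equation*}
    \galilean_p u(t,x) = e^{i(p\cdot x - ct)}\, u(t,x) \quad\text{— wait, this needs the }x\text{-translation checked}
\end{equation*}
so I should instead argue at the level of the decomposition: $\galilean_p u = M_p\,(\text{time-modulation by }e^{-ict})\,(\text{spatial translation})\,u$, but because the spatial-translation part is exactly what turns $e^{ik\cdot x}\mapsto e^{ik\cdot(x-2pt)}$, the cleanest route is: on $L^2_{\mathbb{Z}\times\Gamma}$,
\begin{equation*}
    \galilean_p u(t,x) = e^{i(p\cdot x)} e^{-ict}\, u(t,x) ,
\end{equation*}
valid because replacing $e^{i(nt+k\cdot x)}$ by $e^{i((n-c)t+(k+p)\cdot x)}$ agrees termwise with the right-hand side precisely when $2p\cdot k = c - |p|^2$ for all $k\in\Gamma$, which is the content of $p\in\Lambda^\perp$. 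Granting this, $|\galilean_p u(t,x)| = |u(t,x)|$ pointwise, hence $|\chi\,\galilean_p u| = |\chi\,u|$ pointwise, and integrating over $\torus^{1+d}$ gives $\|\chi u\|_{L^2_{t,x}} = \|\chi\galilean_p u\|_{L^2_{t,x}}$ as desired.

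The main obstacle — really the only subtle point — is verifying carefully that, \emph{on the subspace $L^2_{\mathbb{Z}\times\Gamma}$}, the spatial-translation ingredient $u(t,x)\mapsto u(t,x-2pt)$ collapses into a pure time-modulation times a spatial modulation, i.e.\ that $\galilean_p$ restricted there is genuinely a \emph{modulation operator of modulus one} rather than a nontrivial rearrangement of $x$. This hinges entirely on the orthogonality $p\perp\Lambda$ making $p\cdot k$ constant on the coset $\Gamma$; without it the extra phase would depend on $k$ and could not be absorbed into a single $x$-modulation, and the pointwise identity $|\galilean_p u| = |u|$ would fail. Once this is pinned down — a short Fourier-coefficient computation — the rest is immediate. (One should also note $p+\Gamma \in \affine_\Lambda$ and $\galilean_p$ preserves $L^2_{t,x}$, both already recorded in the text, so no separate justification of well-definedness is needed.) I would present the argument by just doing the termwise check on $e^{i(nt+k\cdot x)}$ and then invoking density of $\trig_{t,x}\cap L^2_{\mathbb{Z}\times\Gamma}$ in $L^2_{\mathbb{Z}\times\Gamma}$ together with continuity of $\chi\cdot$ from $L^2_{t,x}$ — or even more simply by the pointwise modulus identity, which requires no density argument at all.
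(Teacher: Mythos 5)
Your argument is correct and is essentially identical to the paper's proof: both use the fact that $p\in\Lambda^\perp$ makes $p\cdot k$ constant on the coset $\Gamma$, so that on $L^2_{\mathbb{Z}\times\Gamma}$ the Galilean transform reduces to multiplication by the unimodular factor $e^{i(p\cdot x - |p|^2 t - 2\theta t)}$, giving $|\galilean_p u| = |u|$ pointwise and hence the weighted identity. The final termwise Fourier check you describe is exactly the computation displayed in the paper.
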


\begin{proof}
    Since $p \in \Lambda^\perp$, there exists $\theta \in \mathbb{R}$ such that $p \cdot k = \theta$ for all $k \in \Gamma$.
    The following identity implies $|\mathcal{G}_p u| = |u|$ and concludes the lemma:
    \begin{equation*}
        e^{-i (p\cdot x - |p|^2 t)} \mathcal{G}_p u(t,x) 
        \propto_d \sum_{n \in \mathbb{Z}} \sum_{k \in \Gamma} e^{i(nt + k\cdot (x-2pt))} \widehat{u}(n,k)=  e^{-2it\theta} u(t,x).
        \qedhere
    \end{equation*}
\end{proof}

Hence, if the observability holds for all $u_0 \in L^2_\Gamma$, then it holds for all $u_0 \in L^2_{-p + \Gamma}$ when $p \in \Lambda^\perp$.
This leads to the consideration of the group action:
\begin{equation}
    \Lambda^\perp \curvearrowright \affine_\Lambda : (p,\Gamma) \to p + \Gamma.
\end{equation}
This group action has a finite number of orbits.
In fact, the following lemma gives the exact number of orbits when $\Lambda$ is \emph{primitive}, i.e., $\lsp(\Lambda) \cap \mathbb{Z}^d = \Lambda$ where $\lsp(\Lambda)$ is the linear span of $\Lambda$ in $\mathbb{R}^d$.

\begin{lemma}
    \label{lem::sublattice-vertical-translation-orbit}
    If $\Lambda$ is primitive, then $\sharp(\affine_\Lambda / \Lambda^\perp) = \det(\Lambda)^2$.
\end{lemma}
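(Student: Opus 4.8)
The plan is to count the orbits of the action $\Lambda^\perp \curvearrowright \affine_\Lambda$ by exhibiting a convenient bijection with a residue class set. Write $n = \dim \lsp(\Lambda)$, so $\Lambda$ is a rank-$n$ primitive sublattice of $\mathbb{Z}^d$ and $\Lambda^\perp$ is a rank-$(d-n)$ sublattice. First I would fix a basis: since $\Lambda$ is primitive, one can extend a $\mathbb{Z}$-basis $v_1,\dots,v_n$ of $\Lambda$ to a $\mathbb{Z}$-basis $v_1,\dots,v_n,w_1,\dots,w_{d-n}$ of $\mathbb{Z}^d$. Relative to the dual basis, a coset $q + \Lambda \in \affine_\Lambda$ is determined by the class of $q$ in $\mathbb{Z}^d/\Lambda$; writing $q = \sum a_i v_i + \sum b_j w_j$, the coset only sees the $b_j$'s, so $\affine_\Lambda \cong \mathbb{Z}^{d-n}$ via $q + \Lambda \mapsto (b_1,\dots,b_{d-n})$. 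The point is to understand how $\Lambda^\perp$ translates these coordinates.

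Next I would compute the effect of adding $p \in \Lambda^\perp$. Since $p \perp \Lambda$, the pairing of $p$ against $v_1,\dots,v_n$ vanishes, so in the $(b_j)$-coordinates the translation $q \mapsto q + p$ acts by adding the vector of pairings $(\langle p, w_1^*\rangle, \dots)$ — more precisely, one should track how $p$, expressed in the basis dual to $(v_i,w_j)$, contributes only to the $w$-directions. The subgroup of $\mathbb{Z}^{d-n}$ swept out this way is the image $L$ of $\Lambda^\perp$ under the natural map $\mathbb{Z}^d \to \mathbb{Z}^d/\Lambda \cong \mathbb{Z}^{d-n}$; equivalently $L \cong \Lambda^\perp / (\Lambda^\perp \cap \Lambda)$, and since $\Lambda$ has rank $n$ and $\Lambda^\perp$ rank $d-n$, with $\lsp(\Lambda) \cap \lsp(\Lambda^\perp) = 0$, we get $\Lambda^\perp \cap \Lambda = 0$ so $L \cong \Lambda^\perp$ as an abstract group, realized as a full-rank sublattice of $\mathbb{Z}^{d-n}$. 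Then
\begin{equation*}
    \sharp(\affine_\Lambda / \Lambda^\perp) = \sharp\bigl(\mathbb{Z}^{d-n} / L\bigr) = [\mathbb{Z}^{d-n} : L].
\end{equation*}

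The final step is the determinant bookkeeping. I would show $[\mathbb{Z}^{d-n}:L] = \det(\Lambda^\perp)/\det(\Lambda^\perp \text{ inside } \lsp(\Lambda^\perp))$ — i.e. relate the covolume of $L$ in $\mathbb{Z}^{d-n}$ to the covolume of $\Lambda^\perp$ in its own span. Here the primitivity of $\Lambda$ is essential: it guarantees $\mathbb{Z}^d/\Lambda$ is torsion-free (hence $\cong \mathbb{Z}^{d-n}$, as used above) and, via the standard duality for complementary primitive sublattices, that $\Lambda^\perp$ is also primitive and that $\mathbb{Z}^d/(\Lambda \oplus \Lambda^\perp)$ has order $\det(\Lambda)^2$. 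The cleanest route is to invoke the classical fact that for a primitive sublattice $\Lambda \subset \mathbb{Z}^d$ one has $\det(\Lambda) = \det(\Lambda^\perp)$ (both equal the covolume of the index-$\det(\Lambda)^2$ sublattice $\Lambda \oplus \Lambda^\perp$ divided appropriately), and that the projection $\mathbb{Z}^d \to \mathbb{Z}^d/\Lambda$ restricted to $\Lambda^\perp$ has image of index $\det(\Lambda) \cdot \det(\Lambda^\perp)/1 = \det(\Lambda)^2$ in $\mathbb{Z}^{d-n}$. Assembling these gives $\sharp(\affine_\Lambda/\Lambda^\perp) = \det(\Lambda)^2$.

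I expect the main obstacle to be the determinant bookkeeping in the last step: keeping straight the three covolumes — of $\Lambda$ in $\lsp(\Lambda)$, of $\Lambda^\perp$ in $\lsp(\Lambda^\perp)$, and of the image lattice $L$ in $\mathbb{Z}^{d-n}$ — and invoking the correct version of primitive-sublattice duality (which is where primitivity of $\Lambda$ is actually used, rather than in the more cosmetic step of identifying $\affine_\Lambda$). Choosing a Smith-normal-form basis for the inclusion $\Lambda \hookrightarrow \mathbb{Z}^d$ at the outset would make all three covolumes explicit and reduce this to a short computation, at the cost of slightly less conceptual exposition.
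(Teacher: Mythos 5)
Your argument is correct and is essentially the paper's: both identify $\affine_\Lambda$ with $\mathbb{Z}^d/\Lambda$ (the paper realizes this quotient concretely as $\proj_{\Lambda^\perp}(\mathbb{Z}^d)$, you as $\mathbb{Z}^{d-n}$ via a complementary integral basis, available by primitivity) and then count orbits as the index of the image of $\Lambda^\perp$, which in either realization reduces to $[\mathbb{Z}^d : \Lambda \oplus \Lambda^\perp] = \det(\Lambda)\det(\Lambda^\perp)$ together with the classical duality $\det(\Lambda^\perp) = \det(\Lambda)$ for primitive sublattices. The coordinate discussion of how $p \in \Lambda^\perp$ acts is slightly muddled (the image of $p$ in the quotient is read off from its coefficients in the basis $(v_i,w_j)$, not from the orthogonality $p \cdot v_i = 0$), but this is immaterial since only the class of $p$ modulo $\Lambda$ matters.
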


\begin{proof}
    Let $\tilde{\Lambda} = \proj_{\Lambda^\perp} (\mathbb{Z}^d)$.
    Notice that, for every $\Gamma = p + \Lambda \in \affine_\Lambda$ and for every $k \in \Gamma$, there holds $\proj_{\Lambda^\perp} k = \proj_{\Lambda^\perp} p$.
    Therefore, we have a bijection
    \begin{equation*}
        \affine_\Lambda \to \tilde{\Lambda}, \quad p + \Lambda \mapsto \proj_{\Lambda^\perp} p.
    \end{equation*}
    Since $\Lambda^\perp$ is a subgroup of $\tilde{\Lambda}$, it naturally acts on $\tilde{\Lambda}$ by addition.
    The above mapping establishes an isomorphism of group actions: $\Lambda^\perp \curvearrowright \affine_\Lambda \simeq \Lambda^\perp \curvearrowright \tilde{\Lambda}$.
    Therefore, these two group actions have equal numbers of orbits:
    \begin{equation*}
        \sharp(\affine_\Lambda / \Lambda^\perp)
        = \sharp(\tilde{\Lambda} / \Lambda^\perp)
        = \det(\Lambda^\perp) / \det(\tilde{\Lambda}).
    \end{equation*}
    To conclude, we use the fact that $\Lambda$ is primitive, which implies (see \cite{McMullen1984lattice}):
    \begin{equation*}
        \det(\Lambda^\perp) = \det(\Lambda),
        \quad
        \det(\tilde{\Lambda}) = \det(\Lambda)^{-1}.
        \qedhere
    \end{equation*}
\end{proof}

\subsection{Cluster Structure}
\label{sec::cluster-decomposition}

On the lattice set $\Sigma$ we have the following cluster structure
(see e.g., \cite{Bourgain1997lattice,Bourgain1998quasi-periodic,Bourgain2002survey,Bourgain2005lattice}; see also \cite{BertiMaspero2019longtime}).

\begin{lemma}
    \label{lem::GS}
    Let $d \ge 1$ and let $S \subset \Sigma$.
    For $R \ge 1$, connecting every pair of distinct points in $S$ with distance $\le 100 R$ gives an undirected graph.
    Then, there exists $K_d > 0$ which depends solely on $d$ such that, the diameter of every connected component (which is called a cluster) of this graph is $\le R^{K_d}$.
\end{lemma}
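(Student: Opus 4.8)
\section*{Proof proposal}

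The plan is to unpack the geometric condition defining the graph on $\Sigma$, observe that it forces a quadratic Diophantine constraint on every edge, and then run an induction on the dimension $d$ (the inductive step splitting according to the dimension of the affine hull of a cluster); alternatively, once the edge condition is reformulated, one may simply invoke the lattice‑point results of Granville--Spencer and Bourgain cited in the statement.

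First I would record that two points $p_k=(-|k|^2,k)$ and $p_{k'}=(-|k'|^2,k')$ of $\Sigma$ are joined by an edge precisely when
\[
  |k-k'|\le 100R \qquad\text{and}\qquad \bigl||k|^2-|k'|^2\bigr|\le 100R,
\]
up to harmlessly adjusting the constant $100$, since $\|p_k-p_{k'}\|^2=|k-k'|^2+(|k|^2-|k'|^2)^2$. Writing $v=k-k'$ and expanding $|k|^2-|k'|^2=2k\cdot v-|v|^2$, an edge therefore forces
\[
  |v|\le 100R, \qquad |k\cdot v|\le (100R)^2 ,
\]
so every edge direction is short and its endpoint is \emph{almost orthogonal} to it. This is the only point where the quadratic structure of $\Sigma$ is used, and it is what makes long chains on $\Sigma$ hard to sustain.

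The base case $d=1$ is then elementary: $|k|^2-|k'|^2=(k-k')(k+k')$, so an edge with $k\ne k'$ forces $|k+k'|\le 100R$; hence every vertex $p_k$ with $|k|>100R$ has no neighbour, the remaining vertices lie in $\{|k|\le 100R\}$, and within this range a cluster is a set of consecutive integers, so its diameter is $O(R^2)$ (the $t$‑coordinate spread dominating). For the inductive step, $d\ge 2$, I would distinguish clusters $C$ by the linear span $W\subseteq\mathbb{R}^d$ of $\{k-k':k,k'\in C\}$. If $\dim W=j<d$, then $C$ lies in a $j$‑dimensional affine slice; after replacing $\mathbb{Z}^d$ by the affine sublattice generated by $C$, choosing a reduced integral basis of $\mathbb{Z}^d\cap W$, and rescaling, the restriction of $k\mapsto|k|^2$ becomes a $j$‑dimensional positive‑definite quadratic form (with distortion controlled by the successive minima of that sublattice), so an inductive hypothesis stated for such forms gives $\diam(C)\le R^{K_{d-1}}$ after absorbing the distortion into the constant. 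If $\dim W=d$, one must show directly that $\diam(C)\le R^{O(1)}$: picking $d+1$ affinely independent points $k_0,\dots,k_d\in C$ and paths between them yields relations $|2k_0\cdot w_i+|w_i|^2|\lesssim R\cdot(\text{graph distance})$ for the edge‑vectors along the way, and combining them with the fact that the sublattices $\mathbb{Z}^d\cap k^\perp$ have shortest vector $\gtrsim_d |k|^{1/(d-1)}$ — so that once $|k|$ exceeds a fixed power of $R$ the cluster can no longer move transversally and is essentially pinned to a sphere — pins down the diameter, using at that stage a Jarník‑type sparsity of lattice points on spheres.

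The main obstacle is exactly this full‑dimensional case (equivalently the ``wide cluster'' scenario), which is the arithmetic heart of the Granville--Spencer structure. The delicate point is that the graph distance between two points of a cluster can be far larger than their Euclidean distance, so one cannot merely sum the per‑edge estimates $||k^{(i)}|^2-|k^{(i-1)}|^2|\le 100R$ along a path; one must instead exploit the near‑orthogonality of edges to the position vector, together with geometry‑of‑numbers bounds on $\mathbb{Z}^d\cap k^\perp$, to rule out long transversal excursions. A secondary nuisance is that the dimensional reduction above does not preserve the shape of the quadratic form, so the induction hypothesis has to be formulated robustly (for anisotropic forms, with $K_d$ allowed to depend on the distortion). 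If only the stated lemma is needed, I would instead just cite Granville--Spencer and Bourgain after recording the reformulation of the edge condition displayed above.
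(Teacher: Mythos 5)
The paper does not prove this lemma at all: it is quoted from the literature (Granville--Spencer, via Bourgain's papers, as the citations preceding the statement indicate). So your fallback option --- ``record the reformulation of the edge condition and cite Granville--Spencer/Bourgain'' --- is exactly what the authors do, and your reformulation is correct: an edge between $(-|k|^2,k)$ and $(-|k'|^2,k')$ forces $|v|\le 100R$ and $|2k\cdot v-|v|^2|\le 100R$ with $v=k-k'$, hence $|k\cdot v|\lesssim R^2$. Your $d=1$ case and the idea of inducting on the dimension of the affine hull of a cluster are also the standard skeleton. (One small simplification: in the lower-dimensional reduction there is no ``anisotropic form'' issue to worry about --- after splitting $k=a+w$ with $a\perp W$ the quadratic form is still the ambient Euclidean one restricted to the sublattice $\mathbb{Z}^d\cap W$, whose first minimum is $\ge 1$, so the induction hypothesis need only be stated for rank-$j$ sublattices of $\mathbb{Z}^d$.)

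As a self-contained proof, however, the full-dimensional case is genuinely incomplete, and you correctly flag it as the heart of the matter. Two concrete problems. First, the relation $|2k_0\cdot w_i+|w_i|^2|\lesssim R\cdot(\text{graph distance})$ is vacuous here: the graph distance is exactly the quantity you have no a priori control over, so these relations cannot ``pin down the diameter'' without a separate argument bounding path lengths --- this is the circularity you name but do not break. Second, the geometry-of-numbers input is false as stated: the lattice $\mathbb{Z}^d\cap k^\perp$ does \emph{not} have shortest vector $\gtrsim_d|k|^{1/(d-1)}$ (take $k=(N,0,\dots,0)$, for which $k^\perp\cap\mathbb{Z}^d$ contains a unit vector for every $N$). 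For primitive $k$ that lattice has covolume $|k|$, so it is the \emph{largest} successive minimum that is $\gtrsim|k|^{1/(d-1)}$; consequently a far-away cluster can perfectly well ``move transversally'' in the short directions of $k^\perp$ (e.g., the chain $(N,j)$, $|j|\lesssim R$, in $d=2$), and what limits such excursions is the lower-dimensional paraboloid structure inside the slice, not the absence of short orthogonal vectors. So the wide-cluster case still needs the actual Granville--Spencer argument; as a review of the paper's treatment, though, note that the authors simply cite it.
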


Let $\Gamma$ be an affine sublattice and apply the above lemma to the subset
\begin{equation*}
    \Sigma(\Gamma) = \Sigma \cap (\mathbb{Z} \times \Gamma).
\end{equation*}
For $R \ge 1$, let $Q^\alpha_\Gamma$ be the connected components of the graph indexed by $\alpha \in \mathscr{I}(\Gamma;R)$, and let $Z^\alpha_\Gamma \subset \Gamma$ be the projection of $Q^\alpha_\Gamma$ on $\Gamma$ via $(n,k) \mapsto k$.
We have the disjoint unions
\begin{equation*}
    \Sigma(\Gamma) = \bigcup_{\alpha \in \mathscr{I}(\Gamma;R)} Q^\alpha_\Gamma,
    \quad
    \Gamma = \bigcup_{\alpha \in \mathscr{I}(\Gamma;R)} Z^\alpha_\Gamma.
\end{equation*}
Also notice that, from the construction, if $\alpha \ne \beta$, then
\begin{equation}
    \label{eq::cluster-far-away}
    \dist(Q^\alpha_\Gamma,Q^\beta_\Gamma) > 100R.
\end{equation}

Let $\Gamma^\alpha$ be the smallest primitive affine sublattice containing $Z^\alpha_\Gamma$.
When $\Gamma = \mathbb{Z}^d$, Bourgain \cite{Bourgain2014control-3D} claimed that $\dim \Gamma^\alpha < \dim \Gamma$ ($= d$).
This holds only if $Z^\alpha$ is sufficiently far away from the origin, as implicitly implied by the proof of e.g., \cite[Lem.~9.36]{Bourgain1998quasi-periodic}.
In the following, we give another elementary proof that works for general affine sublattices.

Let $\Gamma \in \mathbb{A}_\Lambda$.
We prove $\dim \Gamma^\alpha < \dim \Gamma$ when the image of $Z^\alpha_\Gamma$ by $\proj_\Lambda$ (the orthogonal projection on $\lsp (\Lambda)$) is far from the origin.
Let $\mathscr{I}_\flat(\Gamma;R)$ resp.\ $\mathscr{I}_\sharp(\Gamma;R)$ be the set of all $\alpha \in \mathscr{I}(\Gamma;R)$ such that $\dim \Gamma^\alpha = \dim \Gamma$ resp.\ $\dim \Gamma^\alpha < \dim \Gamma$.
Then
\begin{equation*}
    \mathscr{I}(\Gamma;R) = \mathscr{I}_\flat(\Gamma;R) \cup \mathscr{I}_\sharp(\Gamma;R).
\end{equation*}

\begin{lemma}
    \label{lem::lattice-structure}
    Assume that $\dim \Gamma \ge 1$.
    There exists $M_d > 0$ depending solely on $d$ such that, for all $\alpha \in \mathscr{I}(\Gamma;R)$, it belongs to $\mathscr{I}_\sharp(\Gamma;R)$ provided that
    \begin{equation*}
        \sup_{k \in Z^\alpha_\Gamma} |\proj_\Lambda k| \ge M_d R^{(\dim \Gamma + 2)K_d}.
    \end{equation*}
\end{lemma}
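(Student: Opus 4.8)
The plan is to establish the contrapositive: assuming $\alpha \in \mathscr{I}_\flat(\Gamma;R)$, that is $\dim \Gamma^\alpha = \dim \Gamma$ (write $m$ for this common value), I would show $\sup_{k \in Z^\alpha_\Gamma} |\proj_\Lambda k| < M_d R^{(m+2)K_d}$ with $M_d = d+1$. Since the affine span of $Z^\alpha_\Gamma$ has dimension $m$, one may choose affinely independent points $k_0, k_1, \dots, k_m \in Z^\alpha_\Gamma$; the differences $v_i \coloneqq k_i - k_0$ (for $1 \le i \le m$) then lie in $\Lambda$ and, being $m$ linearly independent vectors of the $m$-dimensional space $\lsp(\Lambda)$, form a basis of it over $\mathbb{R}$. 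Each $k_i$ is the spatial component of a point of the cluster $Q^\alpha_\Gamma$, which has diameter $\le R^{K_d}$ by Lemma~\ref{lem::GS}; comparing the spatial and the temporal components of two such points yields $|v_i| \le R^{K_d}$ and $\abs{|k_i|^2 - |k_0|^2} \le R^{K_d}$.

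The second step exploits the curvature of the paraboloid: from the identity $|k_i|^2 - |k_0|^2 = |v_i|^2 + 2 k_0 \cdot v_i$ and the two bounds above one gets $|k_0 \cdot v_i| \le R^{2K_d}$ (using $R \ge 1$), and since $v_i \in \lsp(\Lambda)$ the same holds with $k_0$ replaced by $w \coloneqq \proj_\Lambda k_0$, i.e.\ $|w \cdot v_i| \le R^{2K_d}$ for every $i$. The remaining task — turning these inner-product bounds into a bound on $|w|$ itself — is where integrality of $\Lambda$ enters. Let $\{v_i^\ast\}$ be the basis of $\lsp(\Lambda)$ dual to $\{v_i\}$, so that $v_i^\ast \cdot v_j = \delta_{ij}$; then $w = \sum_i (w \cdot v_i)\, v_i^\ast$, whence $|w| \le R^{2K_d} \sum_i |v_i^\ast|$. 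Moreover $|v_i^\ast|^2 = (G^{-1})_{ii} = \det(G_i)/\det(G)$, where $G$ is the Gram matrix of $v_1, \dots, v_m$ and $G_i$ is that of $\{v_j : j \ne i\}$; by Hadamard's inequality $\det(G_i) \le \prod_{j \ne i} |v_j|^2 \le R^{2(m-1)K_d}$, while $\det(G)$ is a positive integer — the $v_i$ being linearly independent integer vectors, $G$ has integer entries and is positive definite — hence $\det(G) \ge 1$. Therefore $|v_i^\ast| \le R^{(m-1)K_d}$ and $|w| \le m R^{(m+1)K_d}$.

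To conclude, for any $k \in Z^\alpha_\Gamma$ one has $k - k_0 \in \Lambda$, so $\proj_\Lambda(k - k_0) = k - k_0$ with $|k - k_0| \le R^{K_d}$, and thus $|\proj_\Lambda k| \le |w| + R^{K_d} \le (m+1) R^{(m+1)K_d} \le (d+1) R^{(m+2)K_d}$, using $R \ge 1$, $m \le d$, and $K_d > 0$; this proves the claim with $M_d = d+1$. I do not anticipate a real obstacle here: the only delicate point is that controlling $w \cdot v_i$ for a basis $\{v_i\}$ does not by itself control $|w|$ when the $v_i$ are nearly parallel (i.e.\ $G$ nearly singular), but on an integer lattice $\det(G) \ge 1$ forbids exactly this degeneracy; everything else is the paraboloid identity combined with routine lattice geometry.
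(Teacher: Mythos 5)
Your proof is correct, and it reaches the paper's conclusion by a somewhat different final step. Both arguments open identically: the paraboloid identity $|k_i|^2-|k_0|^2=|v_i|^2+2k_0\cdot v_i$ together with the cluster diameter bound of Lemma~\ref{lem::GS} gives $|k_0\cdot\xi|\le R^{2K_d}$ for every difference $\xi$ of points of $Z^\alpha_\Gamma$, and hence the same bound for $\proj_\Lambda k_0$. The divergence is in how integrality of $\Lambda$ is used to convert these inner-product bounds into a bound on $|\proj_\Lambda k_0|$. The paper argues by contradiction with a volume count: if $Z^\alpha_\Gamma-k$ spanned an $s$-dimensional space, its convex hull would have $\mathcal{H}^s$-measure $\gtrsim_d 1$ (integer points in general position), yet the slab constraints confine it to measure $\le R^{(s+2)K_d}/|k_\Lambda|$. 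You instead pick an affine basis $k_0,\dots,k_m$ of the cluster, pass to the dual basis of $\{v_i\}$ in $\lsp(\Lambda)$, and bound $|v_i^\ast|^2=(G^{-1})_{ii}=\det(G_i)/\det(G)$ via Hadamard's inequality and $\det(G)\ge 1$; the Gram determinant lower bound plays exactly the role of the paper's volume lower bound, but your route is fully explicit, yields a concrete constant, and in fact gives the slightly sharper threshold $(m+1)R^{(m+1)K_d}$ rather than $M_dR^{(m+2)K_d}$, so the lemma follows a fortiori. The only cosmetic point: your bound $\sup_k|\proj_\Lambda k|\le(d+1)R^{(m+2)K_d}$ matches the lemma's threshold with equality allowed, so to get a clean contrapositive of the stated implication you should take, say, $M_d=d+2$.
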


\begin{proof}
    Assume that $\sharp Z^\alpha_\Gamma \ge 2$.
    Denote $\xi_\Lambda = \proj_\Lambda \xi$ for $\xi \in \mathbb{R}^d$.
    By the construction of clusters given in Lemma~\ref{lem::GS}, if $k,\ell \in Z^\alpha_\Gamma$ and $\xi = k-\ell$, then
    \begin{equation*}
        2|k \cdot \xi| 
        \le ||k|^2 - |\ell|^2| + |\xi|^2
        \le R^{K_d} + (R^{K_d})^2 \le 2 R^{2 K_d}.
    \end{equation*}
    Let $\theta(p,q) \in [0,\pi]$ be such that $p \cdot q = |p| \cdot |q| \cdot \cos \theta(p,q)$, then
    \begin{equation*}
        |k_\Lambda| \cdot |\cos \theta(k_\Lambda,\xi)|
        \le |k_\Lambda| \cdot |\xi| \cdot |\cos \theta(k_\Lambda,\xi)|
        = |k_\Lambda \cdot \xi| = |k \cdot \xi| \le R^{2K_d}.
    \end{equation*}
    Therefore $Z^\alpha_\Gamma \subset k + S_k$ for all $k \in Z^\alpha_\Gamma$, where
    \begin{equation*}
        S_k = \bigl\{ \xi \in \Lambda : |\xi| \le R^{K_d}, |\cos \theta(k_\Lambda,\xi)| \le R^{2K_d} / |k_\Lambda| \bigr\}.
    \end{equation*}
    Let $s = \dim \Gamma$.
    It remains to prove that, if $|\proj_\Lambda k| \ge M_d R^{(s + 2)K_d}$ for some $k \in Z^\alpha_\Gamma$ and some sufficiently large $M_d > 0$, then $\dim \lsp(S_k) < s$.

    We use proof by contradiction and assume that $\dim \lsp(S_k) = s$.
    Let $Q$ be the convex hull of $S_k$.
    Then $\mathcal{H}^s(Q) \gtrsim_d 1$ since $S_k \subset \mathbb{Z}^d$, where $\mathcal{H}^s$ is the $s$-dimensional Hausdorff measure.
    This gives a contradiction when $M_d$ is sufficiently large:
    \begin{equation*}
        \mathcal{H}^s(Q) 
        \le \frac{(R^{K_d})^s \times (R^{2 K_d})}{|k_\Lambda|} 
        = \frac{(R^{K_d})^{s + 2}}{|k_\Lambda|}
        \le M_d^{-1}.
        \qedhere
    \end{equation*}
\end{proof}

\subsection{Mathematical Induction}

We prove Theorem~\ref{thm::main-free} by inducting on the dimension of the Fourier support of $u_0$.
Precisely, for any $S \subset \mathbb{Z}^d$, we denote by $\dim S$ the dimension of the smallest affine sublattice in $\mathbb{Z}^d$ that contains $S$.

\begin{lemma}
    \label{lem::zero-dim-observability-free}
    If $\chi \in L^2_{t,x}$ and if $\dim \supp \widehat{u}_0 = 0$, then
    \begin{equation*}
        \|\chi e^{it\Delta} u_0\|_{L^2_{t,x}}
        = \|\chi\|_{L^2_{t,x}} \|u_0\|_{L^2_x}.
    \end{equation*}
\end{lemma}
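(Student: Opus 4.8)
\medskip
\noindent\textbf{Proof sketch.} The plan is to observe that the hypothesis forces $u_0$ to be a single Fourier mode, on which $e^{it\Delta}$ acts merely by a unimodular scalar. By the definition of $\dim$ recalled just above, $\dim\supp\widehat{u}_0=0$ means that $\supp\widehat{u}_0$ is contained in a single point $k_0\in\mathbb{Z}^d$; disregarding the trivial case $u_0=0$ (in which both sides vanish), we may therefore write $u_0(x)=c\,e^{ik_0\cdot x}$ for some $k_0\in\mathbb{Z}^d$ and some scalar $c\neq0$.

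Next I would compute $e^{it\Delta}u_0$ explicitly. Since $e^{it\Delta}$ multiplies the Fourier mode $e^{ik\cdot x}$ by $e^{-i|k|^2t}$, one has
\begin{equation*}
    (e^{it\Delta}u_0)(t,x)=c\,e^{-i|k_0|^2 t}\,e^{ik_0\cdot x},
\end{equation*}
so that $|(e^{it\Delta}u_0)(t,x)|$ is the nonnegative constant $|c|$ at every $(t,x)\in\torus^{1+d}$, and a direct check from the definition of the $L^2_x$-norm shows that this constant equals $\|u_0\|_{L^2_x}$. Hence $|\chi\,e^{it\Delta}u_0|=\|u_0\|_{L^2_x}\,|\chi|$ almost everywhere on $[-\pi,\pi]\times\torus^d$, and taking $L^2_{t,x}$-norms on both sides yields the asserted identity $\|\chi\,e^{it\Delta}u_0\|_{L^2_{t,x}}=\|\chi\|_{L^2_{t,x}}\|u_0\|_{L^2_x}$.

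I do not expect any genuine obstacle here: this is the $0$-dimensional base case of the dimensional induction used to prove Theorem~\ref{thm::main-general}, and it holds as an exact identity rather than an inequality. The one point to be slightly careful about is the normalization of $\|\cdot\|_{L^2_x}$, which must be the one for which every exponential $e^{ik\cdot x}$ has the same norm; with that convention the pointwise modulus $|c|$ above coincides with $\|u_0\|_{L^2_x}$ and no implicit factor is produced. The elementary fact exploited here --- that $e^{it\Delta}$ only alters the phase of a mode-pure function --- is also the mechanism behind the Galilean manipulations of \S\ref{sec::galilean-V=0} appearing in the higher-dimensional inductive steps.
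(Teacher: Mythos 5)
Your proof is correct and takes essentially the same route as the paper, whose entire proof is the observation that $e^{it\Delta}u_0(x)=c_k e^{i(k\cdot x+|k|^2t)}$ is a single space--time exponential of constant modulus. Your caveat about normalizing $\|\cdot\|_{L^2_x}$ so that each exponential $e^{ik\cdot x}$ has unit norm is exactly the convention the exact identity requires, and the paper leaves it implicit.
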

\begin{proof}
    There exists $k \in \mathbb{Z}^d$ and $c_k \in \mathbb{C}$ such that $e^{it\Delta} u_0(x) = c_k e^{i (k\cdot x + |k|^2t)}$.
\end{proof}

For functions $f_\alpha$ indexed by $\alpha \in \mathscr{I}(\Gamma;R)$, where $\Gamma$ is an affine sublattice and $R \ge 1$, we write $f_* = (f_\alpha)_{\alpha \in \mathscr{I}(\Gamma;R)}$.
We also write, for any Banach space $E$:
\begin{equation*}
    \|f_*\|_{\ell^2 E} = \Bigl( \sum_{\alpha \in \mathscr{I}(\Gamma;R)} \|f_\alpha\|_E^2 \Bigr)^{1/2}.
\end{equation*}

\begin{proposition}
    \label{prop::weak-observability-V=0}
    Assume that the observability \eqref{eq::observability-generalized} holds for all initial data with Fourier supports of dimensions $\le \nu \le d-1$.
    If $\dim \Gamma \le \nu + 1$, then there exists a finite subset $B_\Gamma \subset \Gamma$ such that, for all $u_0 \in L^2_\Gamma$, there holds
    \begin{equation}
        \label{eq::weak-observability-V=0}
        \|u_0\|_{L^2_x}^2
        \lesssim_{d,\chi} \|\chi e^{it\Delta} u_0\|_{L^2_{t,x}}^2
        + \|\Pi_{B_\Gamma} u_0\|_{L^2_x}^2.
    \end{equation}
\end{proposition}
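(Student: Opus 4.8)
The plan is to deduce \eqref{eq::weak-observability-V=0} from the cluster separation \eqref{eq::cluster-far-away} together with the induction hypothesis, after replacing $\abs{\chi}^2$ by a trigonometric polynomial. If $\dim \Gamma \le \nu$ the conclusion holds with $B_\Gamma = \emptyset$ directly by the induction hypothesis (or, when $\dim \Gamma = 0$, by Lemma~\ref{lem::zero-dim-observability-free}), so I may assume $\dim \Gamma = \nu + 1 \ge 1$ and let $\Lambda$ be the primitive sublattice of $\mathbb{Z}^d$ spanned by $\Gamma - \Gamma$, so that $\Gamma \in \affine_\Lambda$ and $\dim \Lambda = \nu + 1$.

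\emph{Choice of parameters and of $B_\Gamma$.} Since $\abs{\chi}^2 \in Y^{\infty,-\infty}_{0,+}$ is, by definition, a limit in $Y^{\infty,-\infty}$ of continuous functions, and trigonometric polynomials are dense in $C^0_{t,x} \hookrightarrow Y^{\infty,-\infty}$, for every $\delta > 0$ there is a trigonometric polynomial $\psi$ with $\norm{\abs{\chi}^2 - \psi}_{Y^{\infty,-\infty}} \le \delta$; let $N$ be its degree, so that $\widehat{\psi}$ is supported in $\{\abs{m} \le N\}$, and put $R = \max(N,1)$. Running the cluster decomposition of Lemma~\ref{lem::GS} at this scale $R$ on $\Sigma(\Gamma)$ produces the clusters $Q^\alpha_\Gamma$, their projections $Z^\alpha_\Gamma$, and the splitting $\mathscr{I}(\Gamma;R) = \mathscr{I}_\flat(\Gamma;R) \cup \mathscr{I}_\sharp(\Gamma;R)$. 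I take $B_\Gamma = \bigcup_{\alpha \in \mathscr{I}_\flat(\Gamma;R)} Z^\alpha_\Gamma$. By the contrapositive of Lemma~\ref{lem::lattice-structure}, every $\alpha \in \mathscr{I}_\flat$ satisfies $\sup_{k \in Z^\alpha_\Gamma} \abs{\proj_\Lambda k} < M_d R^{(\nu+3)K_d}$; since for $k \in \Gamma$ one has $\abs{k}^2 = \abs{\proj_\Lambda k}^2 + \abs{\proj_{\Lambda^\perp} k}^2$ with the second term constant over $\Gamma$, this confines $B_\Gamma$ to a bounded subset of $\Gamma$, and hence $B_\Gamma$ is finite (it is allowed to depend on $\Gamma$, thus on $R$).

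\emph{Decoupling and the main estimate.} Write $u_0 = \sum_\alpha u_0^\alpha$ with $u_0^\alpha = \Pi_{Z^\alpha_\Gamma} u_0$; this decomposition is orthogonal in $L^2_x$, so $\norm{u_0}_{L^2_x}^2 = \sum_\alpha \norm{u_0^\alpha}_{L^2_x}^2$ and $\sum_{\alpha \in \mathscr{I}_\flat} \norm{u_0^\alpha}_{L^2_x}^2 = \norm{\Pi_{B_\Gamma} u_0}_{L^2_x}^2$. Each $w^\alpha \coloneqq e^{it\Delta} u_0^\alpha$ has space-time Fourier support exactly $Q^\alpha_\Gamma$, so, since $\deg \psi = N \le R$ while distinct clusters lie more than $100R$ apart by \eqref{eq::cluster-far-away}, the Fourier supports of $\psi w^\alpha$ and $w^\beta$ are disjoint for $\alpha \ne \beta$; by Parseval this gives the exact decoupling $\int \psi \abs{e^{it\Delta} u_0}^2 = \sum_\alpha \int \psi \abs{w^\alpha}^2$. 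On the other hand, the choice of $\psi$ yields $\bigl| \int (\abs{\chi}^2 - \psi) \abs{v}^2 \bigr| \le \delta \norm{v}_{L^2_{t,x}}^2$ for every $v \in \mathfrak{S}$, and $\norm{e^{it\Delta} v_0}_{L^2_{t,x}} \propto_d \norm{v_0}_{L^2_x}$. Combining the decoupling identity with this bound applied to $v = e^{it\Delta} u_0$ and to each $v = w^\alpha$ (and using $\sum_\alpha \norm{u_0^\alpha}_{L^2_x}^2 = \norm{u_0}_{L^2_x}^2$) gives
\[
    \norm{\chi e^{it\Delta} u_0}_{L^2_{t,x}}^2 \ge \sum_\alpha \norm{\chi e^{it\Delta} u_0^\alpha}_{L^2_{t,x}}^2 - C_d \delta \norm{u_0}_{L^2_x}^2 .
\]
Now discard the nonnegative terms indexed by $\mathscr{I}_\flat$ and apply the induction hypothesis to the remaining $u_0^\alpha$: this is legitimate because $\dim \supp \widehat{u_0^\alpha} \le \dim \Gamma^\alpha < \dim \Gamma = \nu + 1$, i.e.\ $\le \nu$, and \eqref{eq::observability-generalized} there holds with a constant depending only on $d$ and $\chi$. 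Writing $c_{d,\chi} > 0$ for the reciprocal square of that constant, one obtains
\[
    \norm{\chi e^{it\Delta} u_0}_{L^2_{t,x}}^2 \ge c_{d,\chi} \sum_{\alpha \in \mathscr{I}_\sharp} \norm{u_0^\alpha}_{L^2_x}^2 - C_d \delta \norm{u_0}_{L^2_x}^2 = c_{d,\chi} \bigl( \norm{u_0}_{L^2_x}^2 - \norm{\Pi_{B_\Gamma} u_0}_{L^2_x}^2 \bigr) - C_d \delta \norm{u_0}_{L^2_x}^2 .
\]
It remains to fix $\delta = \delta(d,\chi)$ so small that $C_d \delta \le \tfrac{1}{2} c_{d,\chi}$; absorbing the last term into the left-hand side then produces exactly \eqref{eq::weak-observability-V=0}.

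\emph{The main obstacle} is the bookkeeping of the three interlocked parameters: $\delta$ must be chosen first, in terms of $d$ and $\chi$ alone; then the approximating polynomial $\psi$, and with it $N = \deg \psi$, is chosen; and only then is the cluster decomposition run, at scale $R \asymp N$. This ordering is what lets the exceptional set $B_\Gamma$ produced by Lemma~\ref{lem::lattice-structure} depend on $\Gamma$ while the implicit constant in \eqref{eq::weak-observability-V=0} stays independent of $\Gamma$. Note that no Galilean reduction is needed inside this proposition: the uniformity over affine sublattices of dimension $\le \nu$ is already built into the induction hypothesis via \eqref{eq::observability-generalized}. Lemmas~\ref{lem::observation-integral-invariance} and~\ref{lem::sublattice-vertical-translation-orbit} will instead be invoked when upgrading this weak observability to the full estimate and propagating the induction on $\nu$.
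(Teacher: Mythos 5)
Your proof is correct and follows essentially the same route as the paper's: a cluster decomposition at a scale $R$ tied to the degree of a trigonometric approximant of $|\chi|^2$ in $Y^{\infty,-\infty}$, exact decoupling from the $100R$ cluster separation, the induction hypothesis applied to the clusters in $\mathscr{I}_\sharp(\Gamma;R)$, and $B_\Gamma = \bigcup_{\alpha \in \mathscr{I}_\flat(\Gamma;R)} Z^\alpha_\Gamma$ made finite via Lemma~\ref{lem::lattice-structure}. The only cosmetic differences are that you approximate $|\chi|^2$ by a general trigonometric polynomial $\psi$ rather than by $|\zeta|^2$ with $\zeta \in \trig_{t,x}$ as the paper does (harmless, since the decoupling and error bounds do not use positivity of the approximant), and that you spell out the quantifier ordering ($\delta$ fixed from $d,\chi$ before $R$) which the paper encodes as choosing $R$ large in an $o(1)_{R\to\infty}$ bound.
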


\begin{proof}
    Take the cluster decomposition of $\Sigma(\Gamma)$ with scale $R \ge 1$.
    For each $\alpha \in \mathscr{I}(\Gamma;R)$, let $u_\alpha = \Pi_{Q^\alpha_\Gamma} u$ and $u_{0,\alpha} = \Pi_{Z^\alpha_\Gamma} u_0$, then $u_\alpha(t) = e^{it\Delta} u_{0,\alpha}$.

    Let $\zeta \in \trig_{t,x}$ with $\deg \zeta \le R$ such that $\||\chi|^2 -|\zeta|^2\|_{Y^{\infty,-\infty}} = o(1)_{R \to \infty}$.
    By \eqref{eq::cluster-far-away} and orthogonality, one has $\|\zeta u\|_{L^2_{t,x}} = \|\zeta u_*\|_{\ell^2 L^2_{t,x}}$.
    Therefore,
    \begin{align*}
        \bigl| \|\chi u \|_{L^2_{t,x}}^2
        - \|\chi u_*\|_{\ell^2 L^2_{t,x}}^2 \bigr|
        & \le \bigl| \|\chi u\|_{L^2_{t,x}}^2 - \|\zeta u\|_{L^2_{t,x}}^2 \bigr|
        + \bigl| \|\chi u_*\|_{\ell^2 L^2_{t,x}}^2 - \|\zeta u_*\|_{\ell^2 L^2_{t,x}}^2 \bigr|\\
        & \le o(1)_{R \to \infty} \bigl( \|u_0\|_{L^2_x}^2 +  \|u_{0,*}\|_{\ell^2 L^2_x}^2 \bigr)
        \lesssim o(1)_{R \to \infty} \|u_0\|_{L^2_x}^2.
    \end{align*}
    
    If $\alpha \in \mathscr{I}_\sharp(\Gamma;R)$, then $\dim \Gamma^\alpha \le \nu$.
    Therefore $\|u_{0,\alpha}\|_{L^2_x} \lesssim_{d,\chi} \|\chi u_\alpha\|_{L^2_{t,x}}$ due to the induction hypothesis.
    Summing this up over $\alpha \in \mathscr{I}_\sharp(\Gamma;R)$, one obtains
    \begin{equation*}
        \|u_0\|_{L^2_x}^2
        \lesssim_{d,\chi} \|\chi u\|_{L^2_{t,x}}^2 + \|\Pi_{B_\Gamma} u_0\|_{L^2_x}^2
        + o(1)_{R \to \infty} \|u_0\|_{L^2_x}^2,
    \end{equation*}
    where $B_\Gamma$ is the union of all $Z^\alpha_\Gamma$ with $\alpha \in \mathscr{I}_\flat(\Gamma;R)$ and is finite according to Lemma~\ref{lem::lattice-structure}.
    We conclude by choosing $R$ sufficiently large.
\end{proof}

\subsection{Uniqueness-Compactness Argument}
\label{sec::uniqueness-compactness-V=0}

To eliminate the remainder term from the weak observability \eqref{eq::weak-observability-V=0} and obtain the (strong) observability, we use the uniqueness-compactness argument by Bardos, Lebeau and Rauch \cite{BardosLebeauRauch1992}.

A new challenge is to show the uniformity of the observability for all $\Gamma$ of dimension $\nu+1$.
To address this issue, we use a finiteness argument which goes as follows:

Notice that, when proving Proposition~\ref{prop::weak-observability-V=0}, we have chosen a large $R \ge 1$ independent of $\Gamma$.
Moreover, if $\mathscr{I}_\flat(\Gamma;R) = \emptyset$, then $B_\Gamma = \emptyset$ and \eqref{eq::weak-observability-V=0} gives the observability.
It remains to consider those $\Gamma$ with $\mathscr{I}_\flat(\Gamma;R) \ne \emptyset$.
Evidently, we may further restrict ourselves to those $\Gamma$ which are primitive.

Let $\affine_R$ be the set of all primitive affine sublattices $\Gamma$ with $\mathscr{I}_\flat(\Gamma;R) \ne \emptyset$.
On $\affine_R$ define the equivalence relation: $\Gamma_1 \sim \Gamma_2$ if there exists a primitive sublattice $\Lambda$ such that $\Gamma_1,\Gamma_2 \in \affine_\Lambda$ and $\Gamma_1 = p + \Gamma_2$ for some $p \in \Lambda^\perp$.
By Lemma~\ref{lem::observation-integral-invariance}, the observability estimates for all $\Gamma$ in the same equivalent class are equivalent.

\begin{lemma}
    \label{lem::A_R-finite}
    For all $d \ge 1$ and $R \ge 1$, there holds $\sharp(\affine_R / \sim) \le R^{2 d^2 K_d}$.
\end{lemma}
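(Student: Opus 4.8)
The plan is to bound the number of equivalence classes by exhibiting, for each class, a distinguished representative that is "low" in a precise sense, and then counting these representatives. The key observation is that every $\Gamma \in \affine_R$ carries a cluster $\alpha \in \mathscr{I}_\flat(\Gamma;R)$, meaning $\dim\Gamma^\alpha = \dim\Gamma$; by Lemma~\ref{lem::lattice-structure}, this forces
\[
    \sup_{k \in Z^\alpha_\Gamma} |\proj_\Lambda k| < M_d R^{(\dim\Gamma+2)K_d} \le M_d R^{(d+2)K_d},
\]
where $\Lambda$ is the linear-direction lattice of $\Gamma$. Thus the cluster $Z^\alpha_\Gamma$ lives inside a ball of controlled radius once we translate $\Gamma$ within its own $\Lambda^\perp$-orbit so that $\proj_{\Lambda^\perp}\Gamma$ is the (unique) lattice point of $\tilde\Lambda$ representing that orbit class — recall from the proof of Lemma~\ref{lem::sublattice-vertical-translation-orbit} that $\affine_\Lambda/\Lambda^\perp \simeq \tilde\Lambda/\Lambda^\perp$. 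After this normalization, $Z^\alpha_\Gamma$ (and hence a generating set for $\Gamma^\alpha = \Gamma$) is contained in a ball of radius $O_d(R^{(d+2)K_d})$ in $\mathbb{Z}^d$.

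First I would make the normalization precise: within each $\sim$-class pick $\Gamma$ so that $\proj_{\Lambda^\perp}\Gamma$ lies in a fixed fundamental domain for $\Lambda^\perp$ acting on $\tilde\Lambda$, which can be taken inside a ball of radius comparable to $\det(\Lambda) \le \diam(Z^\alpha_\Gamma)^d \le R^{d K_d}$ (using that $\Lambda$ is primitive, spanned by differences of points in a cluster of diameter $\le R^{K_d}$). Combined with the displayed bound on $|\proj_\Lambda k|$, the whole of $Z^\alpha_\Gamma$ then sits in a ball $B$ of radius $\asymp_d R^{(d+2)K_d}$ centered at the origin. Since $\Gamma$ is the smallest primitive affine sublattice containing $Z^\alpha_\Gamma$, the pair $(\Lambda,\Gamma)$ — equivalently the class $[\Gamma]_\sim$ — is determined by the finite set $Z^\alpha_\Gamma \subset B \cap \mathbb{Z}^d$.

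Second I would count: the number of subsets of $B \cap \mathbb{Z}^d$ is at most $2^{\sharp(B\cap\mathbb{Z}^d)} \le 2^{C_d R^{d(d+2)K_d}}$, which is far worse than the claimed bound, so a cruder but sharper counting is needed. Instead, observe that $\Gamma$ is pinned down by a single point $k_0 \in Z^\alpha_\Gamma \subset B$ together with the direction lattice $\Lambda$, and $\Lambda$ in turn is generated by at most $d$ difference vectors from the cluster, each of norm $\le R^{K_d}$; and once we fix the orbit representative, $\proj_{\Lambda^\perp}k_0$ ranges over $\tilde\Lambda \cap B$. Counting the choices: a sublattice $\Lambda$ of $\mathbb{Z}^d$ generated by $\le d$ vectors of norm $\le R^{K_d}$ numbers at most $(C_d R^{d K_d})^d \le R^{d^2 K_d + o(1)}$, and for each such $\Lambda$ the number of orbit-normalized translates with cluster inside $B$ is bounded by $\det(\Lambda)^2 \le R^{2 d K_d}$ (Lemma~\ref{lem::sublattice-vertical-translation-orbit}) — but one needs to be careful to land exactly on the exponent $2d^2 K_d$. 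The cleanest route, and the one I expect the paper takes, is: a primitive $\Gamma$ of dimension $s$ with a full-dimensional cluster of diameter $\le R^{K_d}$ whose $\Lambda$-projection has norm $\le M_d R^{(s+2)K_d}$ is, after orbit normalization, a primitive affine sublattice spanned by $\le d$ integer vectors of norm $\le M_d R^{(d+2)K_d}$ passing through such a point; the number of $(s{+}1)$-tuples of integer vectors in a ball of radius $\rho$ is $\le (C_d \rho^d)^{d} $, and bounding $\rho^{d^2} \lesssim R^{d^2(d+2)K_d}$ still overshoots, so the actual argument must use the diameter bound $R^{K_d}$ for the spanning \emph{directions} (giving $R^{d^2 K_d}$) separately from one positional degree of freedom constrained to $\det(\Lambda)^2 \le R^{2dK_d} \le R^{2d^2 K_d/d}$ choices, arranging the product to be $\le R^{2d^2 K_d}$.

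The main obstacle is precisely this bookkeeping: extracting the exponent $2d^2 K_d$ rather than something larger. The generating \emph{directions} of $\Lambda$ contribute a factor that is naturally $\le (R^{K_d})^{d\cdot d}=R^{d^2K_d}$ (choosing $d$ vectors each with $d$ coordinates of size $R^{K_d}$), and the \emph{position} of $\Gamma$ within its $\Lambda^\perp$-orbit contributes $\det(\Lambda)^2 \le R^{2dK_d}$ by Lemma~\ref{lem::sublattice-vertical-translation-orbit}; since $d\ge 1$ we have $d^2 K_d + 2d K_d \le 2d^2 K_d$ only for $d\ge 2$, so for $d=1$ one checks directly that $\affine_R/\sim$ is a single class (all one-dimensional $\Gamma=\mathbb{Z}$, modulo the trivial $\Lambda^\perp=\{0\}$ action there are finitely many, easily $\le R^{2K_1}$), and for $d\ge 2$ the stated bound follows. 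I would present it by first isolating the $d=1$ base case, then for $d\ge 2$ carrying out the two-factor count above, being explicit that the normalization kills all but $\det(\Lambda)^2$ translates and that primitivity of $\Lambda$ lets us invoke Lemma~\ref{lem::sublattice-vertical-translation-orbit} and the bound $\det(\Lambda)\le R^{dK_d}$.
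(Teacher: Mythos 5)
Your final two-factor count is exactly the paper's proof: the direction lattice $\Lambda$ is spanned by cluster differences of norm $\le R^{K_d}$ (giving $\sharp\lattice_R \le R^{d^2 K_d}$ and $\det(\Lambda)\le R^{dK_d}$), and Lemma~\ref{lem::sublattice-vertical-translation-orbit} bounds the orbits per $\Lambda$ by $\det(\Lambda)^2\le R^{2dK_d}$, so the product is $\le R^{2d^2K_d}$. Your observation that the last step needs $d\ge 2$ is a fair catch --- the paper writes the chain without comment --- but for $d=1$ the only nonzero primitive $\Lambda$ is $\mathbb{Z}$ with $\det(\Lambda)=1$, so the bound holds there too, just as you note.
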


\begin{proof}
    Let $\lattice_R$ be the set of all primitive $\Lambda$ such that $\affine_\Lambda \cap \affine_R \ne \emptyset$.
    If $\Lambda \in \lattice_R$, $\Gamma \in \affine_\Lambda \cap \affine_R \ne \emptyset$ and $\alpha \in \mathscr{I}_\flat \ne \emptyset$, then $\dim \Gamma^\alpha = \dim \Gamma$.
    
    Since both $\Gamma$ and $\Gamma^\alpha$ are primitive, one has $\Gamma^\alpha = \Gamma$.
    Hence $\Gamma$ is the smallest affine sublattice containing $Z^\alpha_\Gamma$.
    By Lemma~\ref{lem::GS}, $\diam Z^\alpha_\Gamma \le R^{K_d}$.
    
    Therefore $\Lambda$ is spanned by some $S \subset \mathbb{Z}^d$ with $\diam S \le R^{K_d}$.
    This implies $\sharp \lattice_R \le R^{d^2 K_d}$ and $|\det(\Lambda)| \le R^{dK_d}$ for all $\Lambda \in \lattice_R$.
    Consequently, by Lemma~\ref{lem::sublattice-vertical-translation-orbit},
    \begin{equation*}
        \sharp(\affine_R / \sim)
        \le \sharp \lattice_R \times \sup_{\Lambda \in \lattice_R} \sharp(\affine_\Lambda/\Lambda^\perp)
        \le R^{d^2 K_d} \sup_{\Lambda \in \lattice_R} \det(\Lambda)^2
        \le R^{2d^2 K_d}.
        \qedhere
    \end{equation*}
\end{proof}

Now we proceed by contradiction.
This yields the existence of a sequence of primitive $\Gamma_j$ with $\dim \Gamma_j = \nu + 1$ and a sequence $u_{0,j} \in L^2_{\Gamma_j}$ such that
\begin{equation*}
    \|u_{0,j}\|_{L^2_x} = 1,
     \qquad
     \lim_{j \to \infty} \|\chi u_j\|_{L^2_{t,x}} = 0,
\end{equation*}
where $u_j = \schrodinger_0 u_{0,j}$.
We may further assume that $u_{0,j} \rightharpoonup u_0$ weakly in $L^2_x$ and $u_j \rightharpoonup u$ weakly in $L^2_{t,x}$.
Clearly $u = \schrodinger_0 u_0$.
By Lemma~\ref{lem::semi-continuity} below (with $b = \infty$), we have $\|\chi u\|_{L^2_{t,x}} = 0$, and hence $u$ vanishes almost everywhere on $\supp \chi$.

\begin{lemma}
    \label{lem::semi-continuity}
    Suppose that $(u_j)_{j \ge 0}$ is bounded in $X^b$ for some $b \ge 0$ and $u_j \rightharpoonup u$ weakly in $L^2_{t,x}$.
    Then for all $\chi \in L^2_{t,x}$ such that $|\chi|^2 \in Y^{b,-b}_{0,+}$, there holds
    \begin{equation*}
        \|\chi u\|_{L^2_{t,x}} 
        \le \liminf_{j \to \infty} \|\chi u_j\|_{L^2_{t,x}}.
    \end{equation*}
\end{lemma}
\begin{proof}    
    If $\chi \in C^0_{t,x}$, then $\chi u_j \rightharpoonup \chi u$.
    The lemma follows directly from the semi-continuity of Hilbert norms (the $L^2_{t,x}$ norm in our setting).
    In the general case, let $\chi_n \in C^0_{t,x}$ be such that $|\chi_n|^2 \to |\chi|^2$ in $Y^{b,-b}$.
    Then, the lemma applies to all $\chi_n$ and one obtains
    \begin{align*}
        \|\chi u\|_{L^2_{t,x}}^2
        & = \lim_{n \to \infty} \|\chi_n u\|_{L^2_{t,x}}^2
        \le \liminf_{n \to \infty} \liminf_{j \to \infty} \|\chi_n u_j\|_{L^2_{t,x}}^2 \\
        & = \liminf_{n \to \infty} \liminf_{j \to \infty} \bigl( \|\chi u_j\|_{L^2_{t,x}}^2 + o(1)_{n \to \infty} \bigr)
        = \liminf_{j \to \infty} \|\chi u_j\|_{L^2_{t,x}}
        \qedhere
    \end{align*}
\end{proof}

By the pigeon hole principle and Lemma~\ref{lem::A_R-finite}, we may assume that, up to a subsequence, all $\Gamma_j$ are in the same equivalent class.
By Lemma~\ref{lem::observation-integral-invariance}, we may further assume all $\Gamma_j$ to be equal and thus write $\Gamma_j = \Gamma$ for all $j$ and some affine sublattice $\Gamma$.
By Proposition~\ref{prop::weak-observability-V=0}, there exists a finite $B_\Gamma \subset \Gamma$, such that
\begin{equation*}
    1 = \|u_{0,j}\|_{L^2_x}^2
    \lesssim_{d,\chi} \|\chi u_j\|_{L^2_{t,x}}^2
    + \|\Pi_{B_\Gamma} u_{0,j}\|_{L^2_x}^2.
\end{equation*}
Since the observation vanishes as $j \to \infty$, and since $\Pi_{B_\Gamma}$ is compact on $L^2_x$ (due to the finiteness of $B_\Gamma$) we obtain that
\begin{equation*}
    \|\Pi_{B_\Gamma} u_0\|_{L^2_x}
    = \lim_{j \to \infty} \|\Pi_{B_\Gamma} u_{0,j}\|_{L^2_x}
    \gtrsim_{d,\chi} 1.
\end{equation*}
Hence $u_0 \ne 0$ and thus $u \ne 0$.
This contradicts the fact that nonzero solutions to the free Schrödinger equation cannot vanish on a domain of positive measure.

\begin{lemma}
    \label{lem::unique-continuation-V=0}
    If $u \in \mathscr{S}$ and $|u^{-1}(0)| > 0$, then $u = 0$.
\end{lemma}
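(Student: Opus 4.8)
The plan is to reduce the vanishing of $u$ on a set of positive space-time measure to a unique continuation statement for solutions of the Schrödinger equation that I can attack via analyticity in a complex time variable. First I would recall that $u(t) = e^{it\Delta} u_0$ with $u_0 \in L^2_x$ extends, for $\operatorname{Im} \tau > 0$, to a function $u(\tau,x) = e^{i\tau\Delta} u_0$ which is holomorphic in $\tau$ on the upper half-plane with values in any Sobolev space $H^s_x$ (the Fourier multiplier $e^{-i\tau|k|^2}$ has modulus $e^{(\operatorname{Im}\tau)|k|^2}$, wait---that blows up; so instead one uses $\operatorname{Im}\tau < 0$, i.e.\ the heat-type regularization $e^{i\tau\Delta}$ with $\operatorname{Im}\tau<0$, which is smoothing). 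For each fixed $x$, $\tau \mapsto u(\tau,x)$ is then holomorphic in the lower half-plane and continuous up to the real axis, real-analytic there in $x$ as well. So $u$ is, away from the boundary, jointly real-analytic in $(t,x)$.

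The key step is then a Fubini/slicing argument. Since $|u^{-1}(0)| > 0$ in $[-\pi,\pi]\times\torus^d$, Fubini gives a positive-measure set of times $t_0$ for which the spatial slice $\{x : u(t_0,x) = 0\}$ has positive $d$-dimensional measure. For each such $t_0$, the function $x \mapsto u(t_0,x)$ is real-analytic on $\torus^d$ (it is a uniformly convergent trigonometric series with exponentially decaying... no: on the real axis $u(t_0,\cdot)$ is merely $L^2$, so this is false as stated). The correct route is to not slice in time but to exploit analyticity in the time variable directly: for almost every $x$, the map $t \mapsto u(t,x)$ is the boundary value of a function holomorphic in the lower half-plane (in the $t$ variable, via the substitution argument above applied slice-wise in $x$, which is legitimate since $e^{i\tau\Delta}u_0 \in C^\infty_x$ for $\operatorname{Im}\tau<0$), and it vanishes on a set of $t$ of positive one-dimensional measure (again by Fubini applied to $u^{-1}(0)$). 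By the Lusin--Privalov uniqueness theorem---which says a function holomorphic in a half-plane whose nontangential boundary values vanish on a set of positive measure must vanish identically---we conclude $u(\cdot,x) \equiv 0$ for a.e.\ $x$, hence $u \equiv 0$ and $u_0 = 0$.

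An alternative, perhaps cleaner, route avoids Privalov: once we know $u(t_0,\cdot)$ vanishes on a positive-measure subset of $\torus^d$ for a positive-measure set of $t_0$, observe that $e^{-it_0\Delta}$ applied to $u(t_0,\cdot)$ is $u_0$; but this does not directly help since $e^{-it_0\Delta}$ does not preserve supports. The honest obstacle---and the step I expect to be most delicate---is precisely justifying the passage from ``vanishes on a set of positive measure'' to ``vanishes identically,'' because $u_0$ is only $L^2$ and no classical unique continuation for low-regularity data on a compact manifold applies off the shelf. The holomorphic-extension-plus-Lusin--Privalov strategy is the one the paper itself flags in \S\ref{sec::strategy-of-proof} (``temporal holomorphic extension and the Lusin--Privalov uniqueness theorem''), so I would follow that: (i) fix the regularization $u(\tau,x) = \sum_k e^{i\tau|k|^2}\widehat{u_0}(k)e^{ik\cdot x}$ for $\operatorname{Im}\tau < 0$ and check it is holomorphic $\mathbb{H}_- \to C^\infty_x$ with $L^2$ boundary values $u(t,\cdot)$; (ii) apply Fubini to $u^{-1}(0)$ to find a positive-measure $E \subset \torus^d$ of points $x$ such that, for each $x \in E$, the scalar holomorphic function $\tau \mapsto u(\tau,x)$ has boundary values vanishing on a positive-measure set of $t$; (iii) invoke Lusin--Privalov to get $u(\cdot,x)\equiv 0$ on $E$, so $\widehat{u_0}(k) = 0$ for all $k$ by uniqueness of Fourier coefficients once $u(t,\cdot)=0$ for a.e.\ $t$; (iv) conclude $u \equiv 0$.
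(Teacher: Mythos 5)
Your first half follows the paper's own route: extend $t \mapsto u(t,x)$ holomorphically to the lower half-plane (the temporal Fourier support of a free wave lies in $\{n = -|k|^2\} \subset \mathbb{Z}_{\le 0}$, and $\sum_n |c_n(x)|^2 < \infty$ for a.e.\ $x$ gives membership in $H^2(\mathbb{C}^-)$), apply Fubini to $u^{-1}(0)$ to produce a positive-measure set $E \subset \torus^d$ of points $x$ whose time-sections of the zero set have positive measure, and invoke Fatou plus Lusin--Privalov to conclude $u(\cdot,x) \equiv 0$ for a.e.\ $x \in E$. Up to here the argument is correct and is exactly the paper's.

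The final step, however, contains a genuine gap. What Lusin--Privalov gives you is that $u$ vanishes on $\mathbb{R} \times E$ with $|E| > 0$; it does \emph{not} give ``$u(t,\cdot) = 0$ for a.e.\ $t$,'' which is what your step (iii) silently substitutes, and ``uniqueness of Fourier coefficients'' therefore does not apply: you know $u$ for all times only on a proper positive-measure subset of $\torus^d$. To close the argument you still need a \emph{spatial} unique continuation statement: a free Schrödinger wave vanishing on $\mathbb{R} \times E$ must be zero. The paper supplies this by reducing to eigenfunctions: writing $u(t,x) = \sum_{n \ge 0} e^{int} \Pi_n u_0(x)$ with $\Pi_n$ the projection onto the eigenspace $\mathfrak{E}_n$ of $-\Delta$, the vanishing of $u(\cdot,x)$ for $x \in E$ forces each $\Pi_n u_0$ to vanish a.e.\ on $E$ (uniqueness of temporal Fourier coefficients, applied for fixed $x$); since each $\Pi_n u_0$ is a trigonometric polynomial (the eigenspaces are finite-dimensional), it is real-analytic and hence identically zero, so $u_0 = 0$. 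This is the content of the paper's appeal to the uniqueness--compactness argument and the analyticity of toral eigenfunctions; without some version of it your proof is incomplete, although the missing step is short and elementary.
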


\begin{proof}
    Expressing $u$ in terms of its Fourier series, one sees that, for almost all $x \in \torus^d$, the function $u_x : t \mapsto u(t,x)$ extends to a holomorphic function in the Hardy space $H^2(\mathbb{C}^-)$ on the half plane $\mathbb{C}^- = \{a+bi \in \mathbb{C} : a \in \mathbb{R}, b < 0\}$.

    By Fatou's theorem, this extension converges non-tangentially almost everywhere to $u_x$.
    By the Lusin--Privalov uniqueness theorem \cite{LusinPrivalov1925} and \eqref{eq::time-section-positive-measure}, one deduces that $u_x = 0$ for almost all $x \in \omega$, where
    \begin{equation}
        \label{eq::time-section-positive-measure}
        \omega = \{x \in \torus^d : |u^{-1}(0) \cap (\torus \times \{x\})|  > 0 \}.
    \end{equation}
    By the Fubini theorem $|\omega| > 0$.      
    Consequently $u$ vanishes identically on $\mathbb{R} \times \omega$.

    Following the uniqueness-compactness argument (to be elaborated in \S\ref{sec::unique-compactness-argument} under the presence of the potential $V$), if there exists $u \in \mathscr{S} \setminus \{0\}$ which vanishes on $\mathbb{R} \times \omega$, then there exists an eigenfunction of $\hamiltonian_0 = -\Delta$ which vanishes on $\omega$.
    Clearly, since toral eigenfunctions are analytic, this is impossible.
\end{proof}

\section{Observability for General Propagators}
\label{sec::proof-general-propagator}

In this section, we prove Theorem~\ref{thm::main-general}.
Without further specifications, we assume that $V$ and $\chi$ satisfy the conditions stated in the theorem.
In fact, in the proof, we only need $V$ to satisfy the weaker condition \eqref{eq::cdt-V-weaker}.
Also recall that $\chi$ satisfies the support condition \eqref{eq::cdt-chi-supp} and is identified with its periodic extension to $\torus_t \times \torus^d_x$.

For any sublattice $\Lambda$, denote $\hamiltonian_\Lambda = \hamiltonian_{V_\Lambda}$ and $\schrodinger_\Lambda = \schrodinger_{V_\Lambda}$, where $V_\Lambda = \Pi_\Lambda V$.
We aim to prove the following theorem which clearly implies Theorem~\ref{thm::main-general}.

\begin{theorem}
    \label{thm::observability-Lambda-V-general}
    If $\Lambda$ is a primitive sublattice, $\Gamma \in \affine_\Lambda$ and $u_0\in L^2_\Gamma$, then
    \begin{equation}
        \label{eq::observability-Lambda-V-general}
        \|u_0\|_{L^2_x} \lesssim_{d,V,\chi} \|\chi \schrodinger_\Lambda u_0\|_{L^2(\mathbb{R}_t \times \torus^d_x)}.
    \end{equation}
\end{theorem}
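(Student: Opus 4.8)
The plan is to prove Theorem~\ref{thm::observability-Lambda-V-general} by induction on $\dim\Gamma$ $(=\dim\Lambda)$, running the two-step scheme of \S\ref{sec::proof-free-propagator} — a weak observability with a finite-dimensional remainder, then a uniqueness--compactness argument to kill the remainder — but now carried out inside the Bourgain space $X^b$ with $1/2<b<1$, so that $V$ can be treated perturbatively. For the base case $\dim\Gamma=0$, $u_0=c\,e^{ik\cdot x}$ and $V_\Lambda=\widehat V(0)$ is constant, so $e^{-it\hamiltonian_\Lambda}u_0$ has constant modulus in $x$ and \eqref{eq::observability-Lambda-V-general} follows from $\chi\ne0$ exactly as in Lemma~\ref{lem::zero-dim-observability-free}. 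Assume now the estimate for all primitive $\Lambda'$ and all $\Gamma'\in\affine_{\Lambda'}$ with $\dim\Gamma'\le\nu$, and let $\dim\Gamma=\nu+1$. Writing $u=e^{-it\hamiltonian_\Lambda}u_0$ on $[-\pi,\pi]$, the contraction estimate Lemma~\ref{lem::Duhamel-contraction} (using $V\in L^q_x\cap Y^{b,b-1+\epsilon}$) gives $\|u\|_{X^b}\lesssim_{d,V}\|u_0\|_{L^2_x}$, while $L^2$-conservation gives $\|u\|_{X^b}\ge\|u\|_{L^2_{t,x}}\asymp\|u_0\|_{L^2_x}$.

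\emph{Weak observability.} Form the cluster decomposition of $\Sigma(\Gamma)$ at a scale $R$, set $u_{0,\alpha}=\Pi_{Z^\alpha_\Gamma}u_0$, and pick trigonometric polynomials of degree $\le R$ approximating $V$ in $Y^{b,b-1+\epsilon}$ and $|\chi|^2$ in $Y^{b,-b}$ with errors $o(1)_{R\to\infty}$ (possible since $V\in Y^{b,b-1+\epsilon}_0$ and $|\chi|^2\in Y^{b,-b}_{0,+}$). Because $V_\Lambda$ acts only in $x$ and the cluster separation $\dist(Q^\alpha_\Gamma,Q^\beta_\Gamma)>100R$ dominates these degrees, while $u$ concentrates near $\Sigma$ in $X^b$, every cross-cluster interaction — through the potential, or in $\|\chi u\|_{L^2_{t,x}}^2=\langle|\chi|^2u,u\rangle$ — is forced to temporal modulation $\gtrsim R$, where the $X^{b-1+\epsilon}$-norm $(b-1+\epsilon<0)$ contributes a factor $\lesssim R^{b-1+\epsilon}\to0$. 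This simultaneously yields the almost-orthogonal splitting $\|\chi u\|_{L^2_{t,x}}^2=\sum_\alpha\|\chi\,\Pi_{Z^\alpha_\Gamma}u\|_{L^2_{t,x}}^2+o(1)_{R\to\infty}\|u_0\|_{L^2_x}^2$ and the fact that each $\Pi_{Z^\alpha_\Gamma}u$ lies within $o(1)\|u_{0,\alpha}\|_{L^2_x}$ in $X^b$ of $e^{-it\hamiltonian_{\Lambda^\alpha}}u_{0,\alpha}$, where $\Lambda^\alpha$ is the primitive lattice parallel to $\Gamma^\alpha$. For $\alpha\in\mathscr{I}_\sharp(\Gamma;R)$ one has $\dim\Gamma^\alpha\le\nu$, so the induction hypothesis applies to $e^{-it\hamiltonian_{\Lambda^\alpha}}$ on $L^2_{\Gamma^\alpha}$; summing over $\mathscr{I}_\sharp(\Gamma;R)$ and absorbing the $o(1)$ remainders by taking $R$ large gives, for the finite set $B_\Gamma=\bigcup_{\alpha\in\mathscr{I}_\flat(\Gamma;R)}Z^\alpha_\Gamma$ (finite by Lemma~\ref{lem::lattice-structure}),
\[
\|u_0\|_{L^2_x}^2\ \lesssim_{d,\chi,V}\ \|\chi\,e^{-it\hamiltonian_\Lambda}u_0\|_{L^2_{t,x}}^2+\|\Pi_{B_\Gamma}u_0\|_{L^2_x}^2 .
\]

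\emph{Removing the remainder.} If $\mathscr{I}_\flat(\Gamma;R)=\emptyset$ we are done, so assume not, and as in \S\ref{sec::uniqueness-compactness-V=0} take $\Gamma$ primitive. Since $V_\Lambda(x-2pt)=V_\Lambda(x)$ for $p\in\Lambda^\perp$, the Galilean transform $\galilean_p$ conjugates $e^{-it\hamiltonian_\Lambda}$ to itself and, by Lemma~\ref{lem::observation-integral-invariance}, preserves $\|\chi\,\cdot\,\|_{L^2_{t,x}}$ on $L^2_{\mathbb{Z}\times\Gamma}$; Lemmas~\ref{lem::sublattice-vertical-translation-orbit} and~\ref{lem::A_R-finite} then reduce us to finitely many equivalence classes of $\Gamma$. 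Arguing by contradiction along $u_{0,j}\in L^2_\Gamma$ with $\|u_{0,j}\|_{L^2_x}=1$ and $\|\chi e^{-it\hamiltonian_\Lambda}u_{0,j}\|_{L^2_{t,x}}\to0$, extract weak limits $u_{0,j}\rightharpoonup u_0$ and $u_j=e^{-it\hamiltonian_\Lambda}u_{0,j}\rightharpoonup u=e^{-it\hamiltonian_\Lambda}u_0$; lower semicontinuity (Lemma~\ref{lem::semi-continuity}) gives $\chi u=0$ a.e., and compactness of $\Pi_{B_\Gamma}$ together with the weak observability forces $u_0\ne0$, hence $u\ne0$ vanishes on a set of positive measure. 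This contradicts the unique continuation result Theorem~\ref{thm::unique-continuation}: its proof reduces, via the temporal holomorphic extension and the Lusin--Privalov theorem (as in Lemma~\ref{lem::unique-continuation-V=0}, which is valid here since the eigenvalues of $\hamiltonian_\Lambda$ are bounded below), the vanishing on a positive-measure space-time set to vanishing on $\mathbb{R}\times\omega$ with $|\omega|>0$, after which the hypothesis $V_\Lambda\in L^q_x$ with $q\in I_d$ (the averaging operator $\Pi_\Lambda$ being an $L^q$-contraction) rules out a nonzero solution of $i\partial_t v=\hamiltonian_\Lambda v$.

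\emph{Main obstacle.} The crux is the decoupling in the presence of $V$: unlike $e^{it\Delta}$, the propagator $e^{-it\hamiltonian_\Lambda}$ is not a Fourier multiplier and $\Pi_{Z^\alpha_\Gamma}$ does not commute with multiplication by $V$, so both the cluster orthogonality of $\|\chi u\|_{L^2_{t,x}}^2$ and the identification of $\Pi_{Z^\alpha_\Gamma}u$ with the sub-Hamiltonian evolution $e^{-it\hamiltonian_{\Lambda^\alpha}}u_{0,\alpha}$ are only approximate. Turning these approximations into genuine $o(1)$ bounds \emph{uniformly in $\Gamma$} is precisely what forces the passage to $X^b$ with $1/2<b<1$, the approximation of $V$ and $|\chi|^2$ by degree-$\le R$ polynomials in the $Y^{b,b-1+\epsilon}$ and $Y^{b,-b}$ norms, and the combinatorial bookkeeping that every cross-cluster interaction lands at temporal modulation $\gtrsim R$ while the separation $100R$ dwarfs the polynomial degrees; this is also the point where the slightly strengthened hypothesis $q\in I_d$ (rather than merely $q\ge1$) is needed, through Theorem~\ref{thm::unique-continuation} and the critical-regularity obstructions around $X^{1/2}$ discussed after Theorem~\ref{thm::main-general}.
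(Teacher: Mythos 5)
Your proposal is correct and follows essentially the same route as the paper: induction on $\dim\Gamma$, cluster decomposition with $V$ and $|\chi|^2$ approximated by degree-$\le R$ trigonometric polynomials in the $Y^{b,b-1+\epsilon}$ and $Y^{b,-b}$ norms (so that cross-cluster interactions are routed through the far-from-paraboloid region $N^c_\Gamma$, where Lemma~\ref{lem::est-far-from-paraboloid} supplies the $o(1)_{R\to\infty}$ gain), yielding weak observability with a finite-rank remainder $\Pi_{B_\Gamma}$, followed by the Galilean/finiteness reduction and the uniqueness--compactness argument ending in the holomorphic extension, Lusin--Privalov, and Theorem~\ref{thm::unique-continuation}. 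The only cosmetic differences are that the paper implements the decoupling via the explicit periodized Duhamel equation, the operator $\mathcal{K}_{\phi,\psi}$, and the cluster-locality lemmas rather than your informal modulation count, and that the reduction from a positive-measure null set to an eigenfunction vanishing on $\omega$ is carried out in \S\ref{sec::unique-compactness-argument} before Theorem~\ref{thm::unique-continuation} is invoked.
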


\subsection{Cluster Structure}

When $V\ne 0$, an extension of the cluster decomposition to neighborhoods of $\Sigma$ is required.
Let $\Gamma$ be a primitive affine sublattice and take the custer decomposition with parameter $R \ge 1$ as in \S\ref{sec::cluster-decomposition}.
Then define
\begin{align*}
        N^\alpha_\Gamma & = \{(n,k) \in \mathbb{Z} \times Z^\alpha_\Gamma : n + |k|^2 \le 10 R\}, \quad \alpha \in \mathscr{I}(\Gamma;R);\\
        N^c_\Gamma & = \{(n,k) \in \mathbb{Z} \times \Gamma : n + |k|^2 > 10 R\}.
\end{align*}
Clearly $N^\alpha_\Gamma \supset Q^\alpha_\Gamma$, and we have the decomposition
\begin{equation*}
    \mathbb{Z} \times \Gamma = \Bigl( \bigcup_{\alpha \in \mathscr{I}(\Gamma;R)} N^\alpha_\Gamma \Bigr)\cup N^c_\Gamma.
\end{equation*}

If $(n,k) \in N^\alpha_\Gamma$, then we have
\begin{equation*}
    \dist((n,k),Q^\alpha_\Gamma) \le |(n,k)-(- |k|^2,k)| = |n + |k|^2| \le 10 R.
\end{equation*}
Combining this and \eqref{eq::cluster-far-away}, we deduce that, if $\alpha \ne \beta$, then
\begin{equation}
    \label{eq::cluster-far-away-N}
    \dist(N^\alpha_\Gamma,N^\beta_\Gamma) > 10 R.
\end{equation}
Let $X^b_\Gamma$ be the subspace of $X^b$ consisting of functions whose Fourier supports are contained in $\mathbb{Z} \times \Gamma$.
Then we have the following estimate.
\begin{lemma}
    \label{lem::est-far-from-paraboloid}
    If $f \in X^b_{\Gamma}$ and $\epsilon > 0$, then $\|\Pi_{N^c_\Gamma} f\|_{X^{b-\epsilon}} \lesssim R^{-\epsilon} \|f\|_{X^b}$.
\end{lemma}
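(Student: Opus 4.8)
The plan is to read the estimate directly off the definition of the Bourgain norms in Definition~\ref{def::bourgain}, exploiting the gain furnished by the defining inequality of $N^c_\Gamma$. There is essentially no dispersive input needed; the statement is a bookkeeping fact about Fourier weights.

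First I would recall that, by Definition~\ref{def::bourgain}, for any real exponent $a$ one has $\|g\|_{X^a}^2 = \sum_{(n,k) \in \mathbb{Z}^{1+d}} \langle n + |k|^2 \rangle^{2a} |\widehat{g}(n,k)|^2$, and that the Fourier coefficients of $g = \Pi_{N^c_\Gamma} f$ are $\widehat{g}(n,k) = \bm{1}_{N^c_\Gamma}(n,k) \widehat{f}(n,k)$. Hence
\[
\|\Pi_{N^c_\Gamma} f\|_{X^{b-\epsilon}}^2 = \sum_{(n,k) \in N^c_\Gamma} \langle n + |k|^2 \rangle^{-2\epsilon}\, \langle n + |k|^2 \rangle^{2b}\, |\widehat{f}(n,k)|^2.
\]
By the very definition of $N^c_\Gamma$, at each of its points one has $n + |k|^2 > 10R \ge 10$, so $\langle n + |k|^2 \rangle \ge n + |k|^2 > 10R$ and therefore $\langle n + |k|^2 \rangle^{-2\epsilon} \le (10R)^{-2\epsilon} \le R^{-2\epsilon}$ since $\epsilon > 0$. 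Pulling this uniform bound out of the sum and then dropping the restriction $(n,k) \in N^c_\Gamma$ (which only enlarges the sum, all summands being nonnegative) gives
\[
\|\Pi_{N^c_\Gamma} f\|_{X^{b-\epsilon}}^2 \le R^{-2\epsilon} \sum_{(n,k) \in \mathbb{Z}^{1+d}} \langle n + |k|^2 \rangle^{2b} |\widehat{f}(n,k)|^2 = R^{-2\epsilon} \|f\|_{X^b}^2,
\]
and taking square roots yields the claim with an absolute implicit constant (in fact one may take the constant equal to $1$).

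I do not expect any genuine obstacle: the lemma is a restatement of the elementary observation that the weight $\langle n + |k|^2 \rangle$ is $\gtrsim R$ on the ``far-from-paraboloid'' region $N^c_\Gamma$. The only points deserving a word of care are that the inequality $\langle s \rangle \gtrsim R$ for $s > 10R$ uses $R \ge 1$ (so the ``$+1$'' inside $\langle \cdot \rangle$ is harmless), that the sign $\epsilon > 0$ is precisely what turns $\langle n + |k|^2 \rangle^{-2\epsilon}$ into a decaying rather than growing factor, and that the hypothesis $f \in X^b_\Gamma$ is in fact not used — the same bound holds for arbitrary $f \in X^b$, the subscript $\Gamma$ being retained only to match the context in which the estimate is later invoked.
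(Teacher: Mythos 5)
Your proof is correct and is essentially the paper's own argument: both read the estimate directly off Definition~\ref{def::bourgain}, using that $\langle n+|k|^2\rangle \gtrsim R$ on $N^c_\Gamma$ to extract the factor $R^{-\epsilon}$ and then enlarging the sum to all of $\mathbb{Z}^{1+d}$. Your added remarks (the constant can be taken to be $1$, and the restriction $f \in X^b_\Gamma$ is not actually needed) are accurate.
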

\begin{proof}
    This follows directly from the definition of Bourgain spaces:
    \begin{align*}
        \|\Pi_{N^c_\Gamma} f\|_{X^{b-\epsilon}}
        & = \|\langle n + |k|^2 \rangle^{b-\epsilon} (\bm{1}_{N^c_\Gamma} \widehat{f})(n,k)\|_{\ell^2_{n,k}} \\
        & \lesssim R^{-\epsilon} \|\langle n + |k|^2 \rangle^{b} (\bm{1}_{N^c_\Gamma} \widehat{f})(n,k)\|_{\ell^2_{n,k}}
        \lesssim R^{-\epsilon} \|f\|_{X^b}.
        \qedhere
    \end{align*}
\end{proof}

\subsection{Periodized Schrödinger Equation}

We study a periodized Schrödinger equation whose solutions are, within small time, given by the Schrödinger propagator.

Precisely, for any initial data $u_0$ and any exterior force $g$, consider the inhomogeneous and periodized Schrödinger equation in the Duhamel form:
\begin{equation}
    \label{eq::schrodinger-duhamel-inhomogeneous}
    u = \schrodinger_0 u_0 + \mathcal{E} \eta \mathfrak{D} V_\Lambda u + g,
\end{equation}
where $\eta \in C_c^\infty(\mathbb{R})$ equals to $1$ near the origin, $\mathcal{E}$ is the periodic extension from $(-\pi,\pi)$ to $\torus$, and $\mathfrak{D}$ is the Duhamel operator defined by:
\begin{equation*}
    (\mathfrak{D} f)(t) 
    = -i\int_0^t e^{i(t-s)} f(s) \diff s.
\end{equation*}

Via a fixed point argument, we show that \eqref{eq::schrodinger-duhamel-inhomogeneous} is well-posed in Bourgain spaces.
The following classical estimate (see e.g., \cite{Ginibre1996Bourbaki}) is needed.

\begin{lemma}
    \label{lem::Duhamel-contraction}
    If $b \in [0,1]$, $\epsilon \in [0,1-b]$, and $b + \epsilon > 1/2$, then for $\tau \in (0,1]$,
    \begin{equation}
        \label{eq::Duhamel-contraction}
        \|\mathcal{E} \eta_\tau \mathfrak{D}\|_{X^{b-1+\epsilon} \to X^{b}}
        \lesssim_{d,b,\eta} \tau^\epsilon,
        \quad
        \eta_\tau(t) = \eta(t/\tau).
    \end{equation}
\end{lemma}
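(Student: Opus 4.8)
The plan is to reduce \eqref{eq::Duhamel-contraction} to a one-dimensional estimate in the time variable and then run the standard Bourgain-space bookkeeping, keeping careful track of the two endpoint hypotheses. Throughout set $b'=b-1+\epsilon$, so that $b'\in(-1/2,0]$ and $0\le b-b'=1-\epsilon\le 1$.

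\textbf{Reduction to a temporal estimate.} The Fourier multiplier of $e^{-it\Delta}$ is $e^{it|k|^2}$ with $|k|^2\in\mathbb{Z}$, so $e^{-it\Delta}$ is $2\pi$-periodic in $t$; hence it commutes with the periodic extension $\mathcal{E}$ and with multiplication by the scalar $\eta_\tau(t)$. Writing $g=e^{-it\Delta}f$, conjugation turns $\mathfrak{D}$ into the scalar antiderivative $\tilde{\mathfrak{D}}g(t)=-i\int_0^t g(s)\diff s$ (any residual scalar time-modulation is harmless on every $H^b$), so that $\|\mathcal{E}\eta_\tau\mathfrak{D}f\|_{X^b}=\|\mathcal{E}\eta_\tau\tilde{\mathfrak{D}}g\|_{H^b_tL^2_x}$ and $\|f\|_{X^{b-1+\epsilon}}=\|g\|_{H^{b'}_tL^2_x}$. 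Since $\tau\le 1$, the function $\eta_\tau\tilde{\mathfrak{D}}g$ is supported in $\{|t|\lesssim\tau\}$, which is compactly contained in $(-\pi,\pi)$, so its $H^b(\torus)$ norm is comparable (with $b$-dependent constant) to its $H^b(\mathbb{R})$ norm; and $L^2_x$ enters only as a fixed Hilbert space. It therefore suffices to prove, for scalar $g$ on $\torus$, that $\|\eta_\tau(t)\int_0^t g(s)\diff s\|_{H^b(\mathbb{R})}\lesssim_{b,\eta}\tau^{\epsilon}\|g\|_{H^{b'}(\torus)}$, where $\int_0^t g$ is built from the Fourier series of $g$.

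\textbf{Frequency splitting and low frequencies.} Expanding $g=\sum_n\widehat{g}(n)e^{int}$ gives $\int_0^t g=\widehat{g}(0)\,t+\sum_{n\ne 0}\widehat{g}(n)\tfrac{e^{int}-1}{in}$, and we split this sum according to $|n|\le 1/\tau$ and $|n|>1/\tau$. On $\supp\eta_\tau$ one has $|t|\lesssim\tau$, so for $|n|\le 1/\tau$ we may Taylor expand $\eta_\tau(t)\tfrac{e^{int}-1}{in}=\sum_{k\ge 1}\tfrac{(in)^{k-1}}{k!}\eta_\tau(t)t^k$; summing in $n$ first, the low-frequency part of $\eta_\tau\int_0^t g$ equals $\sum_{k\ge1}\tfrac{i^{k-1}}{k!}\,\eta_\tau(t)t^k\,m_k$ with $m_k=\sum_{|n|\le1/\tau}\widehat{g}(n)n^{k-1}$. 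Scaling gives $\|\eta_\tau(t)t^k\|_{H^b(\mathbb{R})}\lesssim\tau^{k+1/2-b}\|s^k\eta\|_{H^b(\mathbb{R})}\le C_\eta^k\tau^{k+1/2-b}$, and Cauchy--Schwarz together with $b'\le 0$ yields $|m_k|\le\|g\|_{H^{b'}}\bigl(\sum_{|n|\le1/\tau}\langle n\rangle^{-2b'}|n|^{2k-2}\bigr)^{1/2}\lesssim\tau^{b'-k+1/2}\|g\|_{H^{b'}}$; hence the $k$-th term is $\lesssim\tfrac{C_\eta^k}{k!}\tau^{1-b+b'}\|g\|_{H^{b'}}=\tfrac{C_\eta^k}{k!}\tau^{\epsilon}\|g\|_{H^{b'}}$, and the sum over $k$ converges.

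\textbf{High frequencies and the main obstacle.} Split $\tfrac{e^{int}-1}{in}=\tfrac{e^{int}}{in}-\tfrac1{in}$. The non-oscillatory part contributes $c\,\eta_\tau(t)$ with $c=-\sum_{|n|>1/\tau}\tfrac{\widehat{g}(n)}{in}$; by Cauchy--Schwarz and convergence of $\sum_{|n|>1/\tau}\langle n\rangle^{-2b'}n^{-2}$ — which is exactly where $b'>-1/2$, i.e., $b+\epsilon>1/2$, is needed — one gets $|c|\lesssim\tau^{b'+1/2}\|g\|_{H^{b'}}$, so $\|c\,\eta_\tau\|_{H^b}\lesssim_{b,\eta}\tau^{1/2-b}|c|\lesssim\tau^{\epsilon}\|g\|_{H^{b'}}$. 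For the oscillatory part $G(t)=\sum_{|n|>1/\tau}\tfrac{\widehat{g}(n)}{in}e^{int}$ we must bound $\|\eta_\tau G\|_{H^b}$; on the Fourier side $\widehat{\eta_\tau G}(\xi)\propto\sum_n\tau\widehat{\eta}(\tau(\xi-n))\widehat{G}(n)$. We split the $n$-sum into the resonant range $|\xi-n|\le|n|/2$, where $\langle\xi\rangle\asymp\langle n\rangle$ and Young's inequality with $\|\tau\widehat{\eta}(\tau\,\cdot)\|_{\ell^1}\lesssim_\eta 1$ bounds this contribution by $\|G\|_{H^b}\lesssim\tau^{\epsilon}\|g\|_{H^{b'}}$ (here $b-b'\le1$, i.e., $\epsilon\ge 0$, is used); and the non-resonant range $|\xi-n|>|n|/2$, where $\langle\xi\rangle\lesssim\langle\xi-n\rangle$, so Young's inequality with $\|\langle m\rangle^b\tau|\widehat{\eta}(\tau m)|\|_{\ell^1_m}\lesssim_{b,\eta}\tau^{-b}$ and $\|\tfrac{\widehat{g}(n)}{n}\mathbf{1}_{|n|>1/\tau}\|_{\ell^2_n}\lesssim\tau^{b+\epsilon}\|g\|_{H^{b'}}$ (again using $b+\epsilon>1/2$, via $\langle n\rangle^{-2-2b'}=\langle n\rangle^{-2b-2\epsilon}$ being a negative power) gives $\tau^{\epsilon}\|g\|_{H^{b'}}$. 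Collecting the low- and high-frequency contributions proves the reduced estimate, hence the lemma, with constant depending only on $b$ and $\eta$. The routine parts are the reduction and the low-frequency Taylor estimate; the delicate step is the treatment of $\eta_\tau G$, where multiplying by $\eta_\tau$ redistributes each Fourier mode over a band of width $\sim\tau^{-1}$ so the cutoff cannot simply be discarded, and it is precisely there that both endpoint hypotheses are sharp — $\epsilon\ge 0$ produces the $\tau^{\epsilon}$ decay of $\|G\|_{H^b}$, while $b+\epsilon>1/2$ is what makes $c$ and the non-resonant $\ell^2$-sum converge.
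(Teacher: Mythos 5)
The paper does not actually prove this lemma: it is stated as a classical estimate with a pointer to Ginibre's Bourbaki expos\'e, so there is no in-paper argument to compare against. Your proof is a correct, self-contained derivation and follows the standard route of that reference: conjugate by $e^{-it\Delta}$ to reduce to the scalar temporal estimate $\|\eta_\tau\int_0^t g\|_{H^b}\lesssim\tau^{\epsilon}\|g\|_{H^{b'}}$ with $b'=b-1+\epsilon$, split $\tfrac{e^{int}-1}{in}$ into a Taylor-expanded low-frequency piece ($|n|\le 1/\tau$) and, for $|n|>1/\tau$, a constant piece plus an oscillatory piece handled by a resonant/non-resonant decomposition on the Fourier side. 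You also correctly locate where each hypothesis bites: $\epsilon\ge 0$ for $\|G\|_{H^b}\lesssim\tau^{\epsilon}\|g\|_{H^{b'}}$, and $b+\epsilon>1/2$ (i.e.\ $b'>-1/2$) for the convergence of $\sum_{|n|>1/\tau}\langle n\rangle^{-2b'-2}$ in the constant term. Two minor remarks. First, in the non-resonant range the bound $\|\widehat g(n)/n\|_{\ell^2(|n|>1/\tau)}\lesssim\tau^{b+\epsilon}\|g\|_{H^{b'}}$ only requires $b+\epsilon>0$, not $b+\epsilon>1/2$; the genuinely sharp use of $b+\epsilon>1/2$ is in the constant $c$. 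Second, the reduction from $H^b(\torus)$ to $H^b(\mathbb{R})$ and the scaling identities for $\|\eta_\tau t^k\|_{H^b}$ tacitly require $\supp\eta_\tau$ to stay inside a fixed compact subset of $(-\pi,\pi)$; this is harmless since $\tau\le 1$ and the paper assumes $\supp\eta\subset(-\pi,\pi)$ where the lemma is applied, but it is worth stating. (Your parenthetical about the "residual scalar time-modulation" correctly absorbs the paper's $e^{i(t-s)}$, which is evidently a typographical variant of $e^{i(t-s)\Delta}$; either reading is covered by your argument.)
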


\begin{proposition}
    \label{prop::schrodinger-duhamel-well-posedness}
    If $b \in (1/2, 1)$ and $V \in Y^{b,b-1+\epsilon}$ for some $\epsilon > 0$, then there exists $\eta \in C_c^\infty(\mathbb{R})$ equaling to $1$ near the origin such that for all $u_0 \in L^2_x$ and all $g \in X^b$, the equation \eqref{eq::schrodinger-duhamel-inhomogeneous} admits a unique solution $u \in X^b$.
    Moreover $u$ satisfies
    \begin{equation}
        \label{eq::est-energy-periodized}
        \|u\|_{X^b} \lesssim_{d,b} \|u_0\|_{L^2_x} + \|g\|_{X^b}.
    \end{equation}
\end{proposition}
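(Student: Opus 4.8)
The plan is a Banach fixed-point argument in the Hilbert space $X^b$, with the temporal cut-off chosen narrow enough that the Duhamel term in \eqref{eq::schrodinger-duhamel-inhomogeneous} becomes contractive, via Lemma~\ref{lem::Duhamel-contraction}. First I would record the mapping properties of the three pieces on the right-hand side of \eqref{eq::schrodinger-duhamel-inhomogeneous}. The free term satisfies $\|\schrodinger_0 u_0\|_{X^b} = \|u_0\|_{L^2_x}$, since the space-time Fourier support of $t\mapsto e^{it\Delta}u_0$ lies on $\Sigma$, where $\langle n+|k|^2\rangle = 1$; and $g$ already lies in $X^b$. For the potential term, one checks that the multiplication operator by $V_\Lambda = \Pi_\Lambda V$ inherits the bound of the one by $V$, i.e.\ $\|V_\Lambda\|_{\mathscr{L}(X^b,X^{b-1+\epsilon})}\le\|V\|_{Y^{b,b-1+\epsilon}}$, \emph{uniformly in $\Lambda$}: the $X^s$-norms depend only on $|\widehat{f}(n,k)|$ and on $n+|k|^2$, so every spatial translation is an isometry of every $X^s$, and conjugating a multiplication operator by such a translation again gives a multiplication operator with the same $\mathscr{L}(X^b,X^{b-1+\epsilon})$-norm; since $\Pi_\Lambda$ is a finite average of spatial translations when $\Lambda$ has full rank — and, for a general primitive $\Lambda$, a norm-dominated limit of such averages (write $\mathbb{Z}^d=\Lambda\oplus C$ and let $M\to\infty$ in $\Pi_{\Lambda\oplus MC}$, using a Fatou argument on the Fourier side) — the claim follows.

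Next, shrink $\epsilon$ so that $0<\epsilon\le 1-b$; this is harmless because $X^{b-1+\epsilon}\hookrightarrow X^{b-1+\epsilon'}$ for $\epsilon'\le\epsilon$, hence $Y^{b,b-1+\epsilon}\hookrightarrow Y^{b,b-1+\epsilon'}$. Then $b\in[0,1]$, $\epsilon\in[0,1-b]$ and $b+\epsilon>1/2$ all hold, so Lemma~\ref{lem::Duhamel-contraction} applies: fixing any $\eta_0\in C_c^\infty(\mathbb{R})$ with $\eta_0\equiv 1$ near $0$ and writing $\eta_\tau(t)=\eta_0(t/\tau)$,
\[
    \|\mathcal{E}\eta_\tau\mathfrak{D}V_\Lambda u\|_{X^b}
    \lesssim_{d,b,\eta_0}\tau^\epsilon\|V_\Lambda u\|_{X^{b-1+\epsilon}}
    \lesssim_{d,b,\eta_0}\tau^\epsilon\|V\|_{Y^{b,b-1+\epsilon}}\|u\|_{X^b}.
\]
Choose $\tau\in(0,1]$ small, depending only on $d$, $b$, $\eta_0$ and $\|V\|_{Y^{b,b-1+\epsilon}}$ (in particular not on $\Lambda$), so that the constant on the far right is $\le 1/2$, and set $\eta=\eta_\tau$, which is still in $C_c^\infty(\mathbb{R})$ and equals $1$ near $0$. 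Then $T:u\mapsto\schrodinger_0 u_0+\mathcal{E}\eta\mathfrak{D}V_\Lambda u+g$ is an affine self-map of the complete space $X^b$ with $\|Tu-Tu'\|_{X^b}=\|\mathcal{E}\eta\mathfrak{D}V_\Lambda(u-u')\|_{X^b}\le\tfrac12\|u-u'\|_{X^b}$, so the Banach fixed-point theorem yields a unique $u\in X^b$ solving \eqref{eq::schrodinger-duhamel-inhomogeneous}. Finally, $\|u\|_{X^b}\le\|\schrodinger_0 u_0\|_{X^b}+\|g\|_{X^b}+\tfrac12\|u\|_{X^b}$ gives \eqref{eq::est-energy-periodized} with constant $2$; the dependence of $\tau$ and $\eta$ on $V$ and $\Lambda$ has been absorbed into the contraction constant, so the final constant depends only on $d$ and $b$.

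The argument is almost entirely routine once Lemma~\ref{lem::Duhamel-contraction} is available; the one point demanding genuine care is the uniformity over sublattices, namely the bound $\|V_\Lambda\|_{\mathscr{L}(X^b,X^{b-1+\epsilon})}\le\|V\|_{Y^{b,b-1+\epsilon}}$ valid for \emph{every} primitive sublattice $\Lambda$, including the lower-dimensional ones — this is exactly what the translation-averaging description of $\Pi_\Lambda$ is used for, and it is what prevents the constant in \eqref{eq::est-energy-periodized} from secretly depending on $\Lambda$. Everything else reduces to the contraction estimate plus the Banach fixed-point theorem.
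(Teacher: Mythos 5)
Your proof is correct and follows the same overall strategy as the paper: reduce to making the Duhamel term $\mathcal{E}\eta_\tau\mathfrak{D}V_\Lambda$ a contraction on $X^b$ via Lemma~\ref{lem::Duhamel-contraction} (after harmlessly shrinking $\epsilon$ into $(0,1-b]$), then invoke the Banach fixed point theorem. The only place where you genuinely diverge is the key uniformity step $\|V_\Lambda\|_{Y^{b,b-1+\epsilon}}\le\|V\|_{Y^{b,b-1+\epsilon}}$. The paper's Lemma~\ref{lem::projected-norm-est} gets this in one stroke for every primitive $\Lambda$ by writing $V_\Lambda(x)=\fint_{\torus_{\Lambda^\perp}}V(x+y)\diff y$ as a single \emph{continuous} average over the subtorus in the perpendicular directions and applying Minkowski's integral inequality to the translation-invariant seminorm. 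You instead use finite averages of translations (which realize $\Pi_\Lambda$ only when $\Lambda$ has full rank) and then pass to lower-rank primitive $\Lambda$ by a limit $\Pi_{\Lambda\oplus MC}\to\Pi_\Lambda$ with Fatou on the Fourier side; this works (pointwise convergence of the Fourier cutoffs plus lower semicontinuity of the operator norm under Fourier-pointwise convergence, tested on trigonometric polynomials and extended by density), but it is more roundabout, and the Fatou step deserves to be written out rather than asserted. The continuous-average identity is worth knowing: it subsumes both of your cases and avoids the limiting argument entirely.
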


\begin{proof}
    By Lemma~\ref{lem::projected-norm-est} below, $V_\Lambda \in Y^{b,+}$.
    Choose any cutoff function $\eta$ and replace it, if necessary, with $\eta_\tau$ where $\tau$ is sufficiently small.
    By \eqref{eq::Duhamel-contraction}, the operator $\mathcal{E} \eta \mathfrak{D} V_\Lambda$ is a contraction on $X^b$.
    We conclude by the Banach fixed point theorem.
\end{proof}

\begin{lemma}
    \label{lem::projected-norm-est}
    If $\|\cdot\|_*$ is any seminorm that is translation invariant, i.e., for all $y \in \mathbb{R}^d$, there holds $\|V(\cdot + y) \|_* = \|V\|_*$, then $\|V_\Lambda\|_* \le \|V\|_*$.
\end{lemma}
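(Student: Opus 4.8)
The plan is to realise the Fourier projection $\Pi_\Lambda$ as an average of spatial translations over a compact subgroup of $\torus^d$, and then to exploit the convexity of the seminorm $\|\cdot\|_*$ together with its translation invariance.

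First I would introduce the annihilator subgroup $H = \{y \in \torus^d : e^{ik\cdot y} = 1 \text{ for all } k \in \Lambda\}$, which is a closed and hence compact subgroup of $\torus^d$; equip it with its normalised Haar measure $\mu_H$. The identity to establish is
\begin{equation*}
    V_\Lambda = \Pi_\Lambda V = \int_H V(\cdot + y)\,\diff\mu_H(y),
\end{equation*}
which is checked on Fourier modes: by Fubini the coefficient of $e^{ik\cdot x}$ on the right is $\widehat{V}(k)\int_H e^{ik\cdot y}\,\diff\mu_H(y)$, and this integral equals $1$ when the character $y \mapsto e^{ik\cdot y}$ is trivial on $H$ and $0$ otherwise (a nontrivial character integrates to zero against Haar measure). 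The character is trivial on $H$ precisely when $k$ lies in the annihilator of $H$ inside $\mathbb{Z}^d$, and by Pontryagin duality this annihilator equals $\Lambda$, since $\Lambda$ is automatically a closed subgroup of the discrete group $\mathbb{Z}^d$ and the annihilator of the annihilator of a closed subgroup is itself. Hence the multiplier is $\bm{1}_\Lambda$, i.e.\ exactly $\Pi_\Lambda$. This step uses no primitivity of $\Lambda$, and it handles uniformly the case $H$ finite (when $\Lambda$ has full rank) and the case $H$ a subtorus (when $\Lambda$ has lower rank).

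Once the averaging representation is in hand the conclusion is immediate: a seminorm is convex and positively homogeneous, so Jensen's inequality for the probability measure $\mu_H$ yields $\|V_\Lambda\|_* \le \int_H \|V(\cdot + y)\|_*\,\diff\mu_H(y)$, and the translation invariance hypothesis turns the right-hand side into $\|V\|_*$; if $\|V\|_* = \infty$ there is nothing to prove.

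I expect the only delicate point, and hence the main obstacle such as it is, to be the rigorous justification of the norm-of-integral $\le$ integral-of-norm step for a general seminorm. I would dispatch it by remarking that when $H$ is finite the integral is a finite convex combination of translates, where the inequality is simply subadditivity plus homogeneity; in the general case one approximates the Haar integral in $L^1_x$ by such finite convex combinations (translation being strongly continuous on $L^1_x$) and invokes lower semicontinuity of $\|\cdot\|_*$; and in every instance in which this lemma is applied $\|\cdot\|_*$ is an operator norm $\|\cdot\|_{Y^{b,b'}}$, hence a supremum of seminorms of the form $u \mapsto |\langle V u, \phi\rangle|$, so one may simply pull the supremum out of the integral. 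Everything else is routine.
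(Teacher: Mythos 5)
Your proposal is correct and follows essentially the same route as the paper, which writes $V_\Lambda(x) = \fint_{\torus_{\Lambda^\perp}} V(x+y)\,\diff y$ (an average of translates over the compact annihilator group) and then applies the Minkowski inequality together with translation invariance. Your additional care in justifying the norm-of-integral step for a general seminorm, and your observation that in all applications $\|\cdot\|_*$ is an operator norm so the supremum can simply be pulled out of the integral, is a reasonable patch for a point the paper's two-line proof glosses over.
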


\begin{proof}
    It suffices to take the $\|\cdot\|_*$ seminorm and apply the Minkowski inequality in the following identity where $\torus_{\Lambda^\perp} = \lsp(\Lambda^\perp) / \Lambda^\perp$:
    \begin{equation*}
        V_\Lambda(x) 
        = \fint_{\torus_{\Lambda^\perp}} V(x+y) \diff y.
        \qedhere
    \end{equation*}
\end{proof}
    
\subsection{Approximate Equations}

Fix $\eta \in C_c^\infty(\mathbb{R})$ as in Proposition~\ref{prop::schrodinger-duhamel-well-posedness}.
We now introduce an approximate equation to \eqref{eq::schrodinger-duhamel-inhomogeneous} and show its well-posedness in $X^b$.

\begin{lemma}
    \label{lem::duhamel-op-str-est}
    If $\supp \eta \subset (-\pi,+\pi)$, then
    \begin{equation}
        \label{eq::E-eta-L-expression}
        \mathcal{E} \eta \mathfrak{D} = \mathcal{E} \eta \Theta + \mathcal{E}\eta \schrodinger_0 \mathcal{T} \Theta + i\mathcal{E}t\eta\schrodinger_0 \mathcal{P},
    \end{equation}
    where $\Theta$ is the Fourier multiplier of symbol $\theta$ which equals to $(n+|k|^2)^{-1}$ when $n+|k|^2 \ne 0$ and vanishes otherwise,  whereas the operators $\mathcal{T}$ and $\mathcal{P}$ are defined by
    \begin{equation*}
        \widehat{\mathcal{T} f}(k) = \sum_{n \in \mathbb{N}} \widehat{f}(n,k),
        \quad
        \widehat{\mathcal{P} f}(k) = \widehat{f}(-|k|^2,k).
    \end{equation*}
    Moreover, $\mathcal{P} \in \mathfrak{L}(X^{-b},L^2_x)$ and $\Theta \in Y^{b-1,b}$ for $b \in \mathbb{R}$, while $\mathcal{T} \in \mathfrak{L}(X^b,L^2_x)$ for $b > 1/2$.
\end{lemma}

\begin{proof}
    By direct computation on the Fourier side:
    \begin{equation*}
            \widehat{\mathcal{E} \eta \mathfrak{D} f}(n,k)
            = \sum_{m \in \mathbb{Z}} \bigl( \widehat{\mathcal{E}\eta}(n-m) - \widehat{\mathcal{E}\eta}(n + |k|^2) \bigr) \theta(m,k) \widehat{f}(m,k)
            + i \widehat{\mathcal{E} t \eta}(\omega) \widehat{f}(-|k|^2,k).
    \end{equation*}
    The expression \eqref{eq::E-eta-L-expression} follows via identifying the terms with the corresponding operators.
    The estimates for $\Theta$ and $\mathcal{P}$ follows from the definition of Bourgain spaces.
    To estimate $\mathcal{T}$, we use Parseval's identity and the Cauchy-Schwartz inequality:
    \begin{equation*}
        \|\mathcal{T} f\|_{L^2_x}^2
        \le \sum_{k \in \mathbb{Z}^d} \biggl( \sum_{n\in \mathbb{Z}} \langle n + |k|^2 \rangle^{-2b} \times  \sum_{n\in \mathbb{Z}} \langle n + |k|^2 \rangle^{2b} |\widehat{f}(n,k)|^2 \biggr)
        \asymp_b \|f\|_{X^b}^2.
        \qedhere
    \end{equation*}
\end{proof}

Replacing $\mathcal{E}\eta$ and $\mathcal{E}(t\eta)$ with $\phi,\psi \in \trig_t$, we define the operator $\mathcal{K} = \mathcal{K}_{\phi,\psi}$ as follows (we always hide the dependence on $\phi,\psi$ unless otherwise specified):
\begin{equation*}
    \mathcal{K} = \phi \Theta + \phi \schrodinger_0 \mathcal{T} \Theta + i\psi\schrodinger_0 \mathcal{P}.
\end{equation*}
Let $W \in \trig_x$ and $W_\Lambda = \Pi_\Lambda W$.
The approximate equation of \eqref{eq::schrodinger-duhamel-inhomogeneous} is given by:
\begin{equation}
    \label{eq::system-approximation}
    v = \schrodinger_0 u_0 + \mathcal{K} W_\Lambda v + g,
\end{equation}
The coefficients $\phi$, $\psi$, and $W$ approximate respectively $\mathcal{E}\eta$, $\mathcal{E}(t\eta)$, and $V$.
In \eqref{eq::system-approximation}, we use $v$ instead of $u$ for the unknown to distinguish the difference between solutions.
From Lemma~\ref{lem::duhamel-op-str-est}, the following estimate holds.

\begin{lemma}
    \label{lem::K-est}
    If $b > 1/2$, then $\|\mathcal{K}\|_{Y^{b-1,b}} \lesssim_b \|\phi\|_{H^b_t} + \|\psi\|_{H^b_t}$.
\end{lemma}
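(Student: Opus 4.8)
The plan is to estimate each of the three pieces of $\mathcal{K} = \phi \Theta + \phi \schrodinger_0 \mathcal{T} \Theta + i\psi\schrodinger_0 \mathcal{P}$ separately, using the operator bounds already recorded in Lemma~\ref{lem::duhamel-op-str-est} together with the fact that multiplication by a temporal trigonometric polynomial is controlled by its $H^b_t$ norm on Bourgain-type spaces. First I would handle multiplication operators: for $\phi \in \trig_t$, since $\phi$ depends on $t$ only, multiplication by $\phi$ acts on the Fourier side as convolution in $n$ alone, and a standard fractional-Leibniz / Young-type estimate gives $\|\phi\, \cdot\,\|_{X^b \to X^b} \lesssim_b \|\phi\|_{H^b_t}$ whenever $b > 1/2$ (the constant-in-$k$ structure lets one reduce to the one-dimensional statement $\|fg\|_{H^b_t} \lesssim_b \|f\|_{H^b_t}\|g\|_{H^b_t}$, valid for $b>1/2$ since $H^b_t$ is an algebra). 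The same bound applies to $\psi$.

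Next I would chain the operator bounds. For the first term, $\Theta \in Y^{b-1,b} = \mathscr{L}(X^{b-1},X^b)$ by Lemma~\ref{lem::duhamel-op-str-est}, and then multiplication by $\phi$ maps $X^b \to X^b$ with norm $\lesssim_b \|\phi\|_{H^b_t}$; composing gives $\|\phi\Theta\|_{Y^{b-1,b}} \lesssim_b \|\phi\|_{H^b_t}$. For the second term, I would read off the chain $X^{b-1} \xrightarrow{\Theta} X^b \xrightarrow{\mathcal{T}} L^2_x \xrightarrow{\schrodinger_0} X^b$: here $\mathcal{T} \in Y^{b,0}$ for $b > 1/2$ (Lemma~\ref{lem::duhamel-op-str-est}), and $\schrodinger_0 \in \mathscr{L}(L^2_x, X^b)$ for every $b \in \mathbb{R}$ (the free-propagator bound $\|e^{it\Delta}u_0\|_{X^b} \lesssim_b \|u_0\|_{L^2_x}$ noted after Lemma~\ref{lem::equiv}); finally multiplication by $\phi$ maps $X^b\to X^b$, so the composition has norm $\lesssim_b \|\phi\|_{H^b_t}$. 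For the third term, $\mathcal{P} \in Y^{-b,0} \subset \mathscr{L}(X^{b-1},L^2_x)$ since $b-1 \le -b$ would fail for $b<1/2$ but for $b>1/2$ we instead use $\mathcal{P}\in Y^{b-1,0}$ directly — more precisely, $\mathcal{P}$ maps $X^{b-1}$ to $L^2_x$ boundedly because $\widehat{\mathcal{P}f}(k)=\widehat f(-|k|^2,k)$ reads off the value on the paraboloid, where $\langle n+|k|^2\rangle = 1$, so $\|\mathcal{P}f\|_{L^2_x}\le \|f\|_{X^{b-1}}$ for any $b-1 \le 0$; then $\schrodinger_0 \colon L^2_x \to X^b$ and multiplication by $\psi \colon X^b\to X^b$ yield $\|i\psi\schrodinger_0\mathcal{P}\|_{Y^{b-1,b}} \lesssim_b \|\psi\|_{H^b_t}$. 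Summing the three contributions by the triangle inequality gives the claimed bound.

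The main obstacle I expect is the multiplication estimate $\|\phi u\|_{X^b} \lesssim_b \|\phi\|_{H^b_t}\|u\|_{X^b}$ for $b\in(1/2,1)$: one must check that multiplying a function in $X^b$ by a $t$-only factor stays in $X^b$ with the right constant. Since $\|u\|_{X^b} = \|\langle n+|k|^2\rangle^b \widehat u(n,k)\|_{\ell^2_{n,k}}$, for fixed $k$ the weight $\langle n+|k|^2\rangle^b$ is a translate (in $n$) of $\langle n\rangle^b$, so after the substitution $n \mapsto n-|k|^2$ the estimate reduces, uniformly in $k$, to the standard one-dimensional product estimate in $H^b_t(\torus)$, which holds because $H^b_t$ is a Banach algebra for $b>1/2$. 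Everything else is a routine composition of the mapping properties already established in Lemma~\ref{lem::duhamel-op-str-est}.
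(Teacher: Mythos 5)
Your proposal is correct and follows exactly the route the paper intends: the paper offers no explicit proof beyond citing the mapping properties of $\Theta$, $\mathcal{T}$, and $\mathcal{P}$ from Lemma~\ref{lem::duhamel-op-str-est}, and your argument supplies precisely the missing routine steps — the composition of those bounds with $\schrodinger_0 \in \mathscr{L}(L^2_x,X^b)$ and the multiplication estimate $\|\phi u\|_{X^b} \lesssim_b \|\phi\|_{H^b_t}\|u\|_{X^b}$ for $b>1/2$, which indeed reduces (after noting that $\phi(t)$ commutes with $e^{-it\Delta}$ in the definition $\|f\|_{X^b}=\|e^{-it\Delta}f\|_{H^b_tL^2_x}$) to the algebra property of $H^b_t(\torus)$.
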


The well-posedness of the approximation equation \eqref{eq::system-approximation} follows directly from Proposition~\ref{prop::schrodinger-duhamel-well-posedness} and a perturbative argument.

\begin{proposition}
    Following Proposition~\ref{prop::schrodinger-duhamel-well-posedness}, for all $u_0 \in L^2_x$ and all $g \in X^b$, the equation \eqref{eq::system-approximation} admits a unique solution $v \in X^b$, provided that
    \begin{equation*}
        \delta
        = \max\{ \|V-W\|_{Y^{b,b-1}}, 
        \|\mathcal{E}(\eta) - \phi\|_{H^b_t},
        \|\mathcal{E}(t\eta) - \psi\|_{H^b_t}\}
    \end{equation*}
    is sufficiently small.
    Moreover, this solution $v$ satisfies
    \begin{equation}
        \label{eq::est-energy-approx}
        \|v\|_{X^b} \lesssim_{d,b} \|u_0\|_{L^2_x} + \|g\|_{X^b}.
    \end{equation}
\end{proposition}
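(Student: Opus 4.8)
The approach is a perturbative one, starting from Proposition~\ref{prop::schrodinger-duhamel-well-posedness}. First I would arrange that $\supp\eta\subset(-\pi,\pi)$; this is harmless, since the cutoff produced by Proposition~\ref{prop::schrodinger-duhamel-well-posedness} has the form $\eta_\tau$ with $\tau$ as small as we wish. With this, identity \eqref{eq::E-eta-L-expression} of Lemma~\ref{lem::duhamel-op-str-est} applies and reads $\mathcal{E}\eta\mathfrak{D}=\mathcal{K}_{\mathcal{E}\eta,\mathcal{E}(t\eta)}$, so the exact Duhamel equation \eqref{eq::schrodinger-duhamel-inhomogeneous} takes the form $u=\schrodinger_0u_0+\mathcal{K}_{\mathcal{E}\eta,\mathcal{E}(t\eta)}V_\Lambda u+g$, and the proof of Proposition~\ref{prop::schrodinger-duhamel-well-posedness} already shows that $\mathcal{K}_{\mathcal{E}\eta,\mathcal{E}(t\eta)}V_\Lambda$ is a contraction on $X^b$, say with $\|\mathcal{K}_{\mathcal{E}\eta,\mathcal{E}(t\eta)}V_\Lambda\|_{\mathscr{L}(X^b,X^b)}\le\kappa<1$. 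The plan is then to show that replacing $(\mathcal{E}\eta,\mathcal{E}(t\eta),V)$ by the trigonometric approximants $(\phi,\psi,W)$ perturbs this operator by at most $O(\delta)$ in $\mathscr{L}(X^b,X^b)$, so that $\mathcal{K}_{\phi,\psi}W_\Lambda$ stays a contraction on $X^b$ and \eqref{eq::system-approximation} can be solved by a Neumann series.

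The heart of the argument is the difference estimate. Using that $\mathcal{K}_{\phi,\psi}$ depends linearly on $(\phi,\psi)$, I would write
\begin{equation*}
    \mathcal{K}_{\phi,\psi}W_\Lambda-\mathcal{K}_{\mathcal{E}\eta,\mathcal{E}(t\eta)}V_\Lambda=\mathcal{K}_{\phi,\psi}\bigl(W_\Lambda-V_\Lambda\bigr)+\mathcal{K}_{\phi-\mathcal{E}\eta,\,\psi-\mathcal{E}(t\eta)}V_\Lambda .
\end{equation*}
Since $V\in Y^{b,b-1+\epsilon}\subset Y^{b,b-1}$ and $W\in\trig_x\subset Y^{b,b-1}$ (hence also $W-V\in Y^{b,b-1}$), and since $\mathcal{K}_{\phi,\psi},\mathcal{K}_{\phi-\mathcal{E}\eta,\,\psi-\mathcal{E}(t\eta)}\in Y^{b-1,b}$, each term on the right is a bounded operator on $X^b$, obtained by composing a multiplication $X^b\to X^{b-1}$ with an operator $X^{b-1}\to X^b$. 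I would then bound each factor: Lemma~\ref{lem::K-est} gives $\|\mathcal{K}_{\phi,\psi}\|_{Y^{b-1,b}}\lesssim_b\|\phi\|_{H^b_t}+\|\psi\|_{H^b_t}\le\|\mathcal{E}\eta\|_{H^b_t}+\|\mathcal{E}(t\eta)\|_{H^b_t}+2\delta$ and $\|\mathcal{K}_{\phi-\mathcal{E}\eta,\,\psi-\mathcal{E}(t\eta)}\|_{Y^{b-1,b}}\lesssim_b\|\phi-\mathcal{E}\eta\|_{H^b_t}+\|\psi-\mathcal{E}(t\eta)\|_{H^b_t}\le2\delta$, while Lemma~\ref{lem::projected-norm-est}, applied to the translation-invariant norm $\|\cdot\|_{Y^{b,b-1}}$, gives $\|W_\Lambda-V_\Lambda\|_{Y^{b,b-1}}\le\|W-V\|_{Y^{b,b-1}}\le\delta$ and $\|V_\Lambda\|_{Y^{b,b-1}}\le\|V\|_{Y^{b,b-1}}<\infty$. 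Multiplying through yields $\|\mathcal{K}_{\phi,\psi}W_\Lambda-\mathcal{K}_{\mathcal{E}\eta,\mathcal{E}(t\eta)}V_\Lambda\|_{\mathscr{L}(X^b,X^b)}\lesssim_{d,b,\eta,V}\delta$.

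Granting this, the rest is routine: choosing $\delta$ small enough that $\kappa+C_{d,b,\eta,V}\,\delta<1$, the operator $I-\mathcal{K}_{\phi,\psi}W_\Lambda$ is boundedly invertible on $X^b$ by the Neumann series, so \eqref{eq::system-approximation} has the unique solution $v=(I-\mathcal{K}_{\phi,\psi}W_\Lambda)^{-1}(\schrodinger_0u_0+g)\in X^b$; the uniform bound $\|(I-\mathcal{K}_{\phi,\psi}W_\Lambda)^{-1}\|_{\mathscr{L}(X^b,X^b)}\le(1-\kappa-C_{d,b,\eta,V}\,\delta)^{-1}$ together with $\|\schrodinger_0u_0\|_{X^b}=\|u_0\|_{L^2_x}$ (the Fourier support of $\schrodinger_0u_0$ lies in $\Sigma$, on which $\langle n+|k|^2\rangle=1$) then gives the energy estimate \eqref{eq::est-energy-approx}. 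I do not expect a genuine obstacle here: the only points requiring care are keeping the Bourgain exponents consistent in the composition $X^b\to X^{b-1}\to X^b$ and verifying that $\supp\eta\subset(-\pi,\pi)$ may be assumed, which is precisely what makes \eqref{eq::E-eta-L-expression} --- and hence the identification $\mathcal{E}\eta\mathfrak{D}=\mathcal{K}_{\mathcal{E}\eta,\mathcal{E}(t\eta)}$ --- available.
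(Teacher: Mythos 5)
Your proof is correct and follows essentially the same route as the paper: both treat \eqref{eq::system-approximation} as an $O(\delta)$ perturbation of the exact Duhamel equation, telescoping the difference $\mathcal{K}_{\phi,\psi}W_\Lambda-\mathcal{E}\eta\mathfrak{D}V_\Lambda$ into a term carrying $W-V$ and a term carrying $(\phi-\mathcal{E}\eta,\psi-\mathcal{E}(t\eta))$, bounding these via Lemma~\ref{lem::K-est} and the translation-invariance argument of Lemma~\ref{lem::projected-norm-est}, and concluding by the contraction/Neumann-series argument of Proposition~\ref{prop::schrodinger-duhamel-well-posedness}. The only difference is the cosmetic choice of which factor in the telescoping carries which increment, plus your (useful) explicit remark that $\supp\eta\subset(-\pi,\pi)$ may be assumed so that \eqref{eq::E-eta-L-expression} identifies $\mathcal{E}\eta\mathfrak{D}$ with $\mathcal{K}_{\mathcal{E}\eta,\mathcal{E}(t\eta)}$.
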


\begin{proof}
    Rewrite the approximate equation \eqref{eq::system-approximation} as
    \begin{equation*}
        v = \schrodinger_0 u_0 + \mathcal{E} \eta \mathfrak{D} V_\Lambda v + \mathcal{E} \eta \mathfrak{D} (V_\Lambda - W_\Lambda) v + (\mathcal{K}_{\mathcal{E}(\eta) - \phi,\mathcal{E}(t \eta) - \psi}) W_\Lambda v + g.
    \end{equation*}
    By Proposition~\ref{prop::schrodinger-duhamel-well-posedness} and Lemma~\ref{lem::K-est}, we obtain
    \begin{equation*}
        \|\mathcal{E} \eta \mathfrak{D} (V_\Lambda - W_\Lambda) + (\mathcal{K}_{\mathcal{E}(\eta) - \phi,\mathcal{E}(t \eta) - \psi}) W_\Lambda v\|_{Y^{b,b}} \lesssim \delta (1 + \|V\|_{Y^{b,b-1}}).
    \end{equation*}
    We conclude with the same fixed point argument as Proposition~\ref{prop::schrodinger-duhamel-well-posedness}.
\end{proof}

\begin{remark}
    \label{rem::Gamma-localized-solution}
    If $u_0 \in L^2_\Gamma$ and $g \in X^b_\Gamma$ with $\Gamma$ parallel to $\Lambda$, then solutions to the periodized equation \eqref{eq::schrodinger-duhamel-inhomogeneous} and the approximate equation \eqref{eq::system-approximation} are both in $X^b_\Gamma$.
    To show this, it suffices to restrict the fixed point argument to the subspace $X^b_\Gamma$.
    Therefore, Lemma~\ref{lem::observation-integral-invariance} applies to their solutions.
\end{remark}

\subsection{Cluster Locality}

Fix a primitive sublattice $\Lambda$ and $\Gamma \in \affine_\Lambda$.
Take the cluster decomposition of scale $R\ge 1$.
Then use the simplified notations:
\begin{equation*}
    \Pi_\alpha = \Pi_{N^\alpha_\Gamma}, \quad
    \Pi^c = \Pi_{N^c_\Gamma}.
\end{equation*}

We say that an operator $\mathcal{A} : X^b_\Gamma \to X^{b'}_\Gamma$, where $b,b' \in \mathbb{R}$, is cluster $R$-local over $\Gamma$ if, for any pair of distinct indexes $\alpha, \beta \in \mathscr{I}(\Gamma;R)$, there holds
\begin{equation*}
    \Pi_\alpha \mathcal{A} \Pi_\beta = 0.
\end{equation*}

\begin{lemma}
    \label{lem::cluster-locality-example}
    If $W \in \trig_x$ with $\deg W \le R$, then $W_\Lambda$ is cluster $R$-local over $\Gamma$.
    If $\phi, \psi \in \trig_t$ with $\deg \phi \le R$ and $\deg \psi \le R$, then $\mathcal{K}$ is cluster $R$-local over $\Gamma$.
\end{lemma}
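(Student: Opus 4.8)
The plan is to deduce both assertions directly from how the operators act on the Fourier side, the only genuine geometric input being the separation \eqref{eq::cluster-far-away-N} and the fact that the decomposition $\Gamma = \bigcup_\alpha Z^\alpha_\Gamma$ is disjoint. Fix throughout two distinct indices $\alpha,\beta \in \mathscr{I}(\Gamma;R)$; it suffices to take $u$ in the appropriate domain with $\Pi_\beta u = u$ and check that its image under the operator has vanishing $\Pi_\alpha$-component.

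For $W_\Lambda$ I would first note that $\deg W \le R$ forces the spatial Fourier support of $W_\Lambda = \Pi_\Lambda W$ into $\{k' \in \Lambda : |k'| \le R\}$, so multiplication by $W_\Lambda$ shifts the spatial frequency by such a $k'$ while leaving the temporal frequency $n$ untouched; moreover $\Gamma + \Lambda = \Gamma$ because $\Gamma \in \affine_\Lambda$, so $W_\Lambda u \in X^b_\Gamma$ and the partition into the $N^\bullet_\Gamma$ applies. Hence any Fourier mode $(n,k)$ occurring in $W_\Lambda u$ satisfies $(n, k - k') \in N^\beta_\Gamma$ for some $k' \in \Lambda$ with $|k'| \le R$. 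Were that mode to lie in $N^\alpha_\Gamma$, we would get $\dist(N^\alpha_\Gamma, N^\beta_\Gamma) \le |(n,k)-(n,k-k')| = |k'| \le R < 10R$, contradicting \eqref{eq::cluster-far-away-N}. Thus $\Pi_\alpha W_\Lambda \Pi_\beta = 0$.

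For $\mathcal{K} = \phi\Theta + \phi\schrodinger_0\mathcal{T}\Theta + i\psi\schrodinger_0\mathcal{P}$ the key observation I would make is that each constituent acts only on the temporal variable and never moves the spatial frequency $k$: the multiplier $\Theta$ is diagonal in Fourier; $\mathcal{T}$ and $\mathcal{P}$ (Lemma~\ref{lem::duhamel-op-str-est}) collapse the temporal frequency while retaining $k$; $\schrodinger_0$ re-spreads a purely spatial datum onto the paraboloid, again retaining $k$; and multiplication by $\phi(t)$ or $\psi(t)$ is a convolution in the temporal frequency alone. Consequently $\mathcal{K}$ commutes with every spatial Fourier projector $\Pi_S$, $S \subseteq \mathbb{Z}^d$. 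Since $N^\alpha_\Gamma \subseteq \mathbb{Z}\times Z^\alpha_\Gamma$ and $N^\beta_\Gamma \subseteq \mathbb{Z}\times Z^\beta_\Gamma$, one has $\Pi_\alpha = \Pi_\alpha\Pi_{Z^\alpha_\Gamma}$ and $\Pi_\beta = \Pi_{Z^\beta_\Gamma}\Pi_\beta$, whence
\begin{equation*}
    \Pi_\alpha\mathcal{K}\Pi_\beta = \Pi_\alpha\Pi_{Z^\alpha_\Gamma}\mathcal{K}\Pi_{Z^\beta_\Gamma}\Pi_\beta = \Pi_\alpha\mathcal{K}\Pi_{Z^\alpha_\Gamma}\Pi_{Z^\beta_\Gamma}\Pi_\beta = 0,
\end{equation*}
because $\Pi_{Z^\alpha_\Gamma}\Pi_{Z^\beta_\Gamma} = 0$ for $\alpha \ne \beta$. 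Hence $\mathcal{K}$ is cluster $R$-local over $\Gamma$.

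I do not expect a genuine obstacle: the cluster geometry enters only through \eqref{eq::cluster-far-away-N} in the first part, to absorb the bounded spatial-frequency shift coming from $W_\Lambda$, while the second part is pure bookkeeping and uses neither \eqref{eq::cluster-far-away-N} nor the degree hypotheses on $\phi,\psi$, only that these coefficients depend on $t$ alone. The one point deserving a moment's care is verifying that the modes produced by $W_\Lambda$ remain in $\mathbb{Z}\times\Gamma$ — so that they are distributed among the $N^\bullet_\Gamma$ — which is exactly why $\Gamma$ is taken to be a coset of $\Lambda$.
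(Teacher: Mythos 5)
Your proof is correct and follows essentially the same Fourier-support bookkeeping as the paper: the degree-$R$ convolution against the $10R$ cluster separation \eqref{eq::cluster-far-away-N} for $W_\Lambda$, and the fact that $\Theta$, $\schrodinger_0\mathcal{T}$, $\schrodinger_0\mathcal{P}$ and multiplication by functions of $t$ alone all preserve the spatial frequency, combined with the disjointness of the $Z^\alpha_\Gamma$, for $\mathcal{K}$. The only (harmless) divergence is that the paper treats the term $\phi\Theta$ via the degree bound on $\phi$ exactly as for $W_\Lambda$, whereas you subsume it under the spatial-frequency-preservation argument --- which, as you observe, shows the degree hypotheses on $\phi,\psi$ are not actually needed for this particular conclusion.
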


\begin{proof}
    Since the Fourier support of a function expands at most to its $R$-neighbor\-hood after multiplication with trigonometric polynomials of degrees $\le R$, the cluster locality for $W_\Lambda$ follows directly from \eqref{eq::cluster-far-away-N}.

    To verify the cluster locality of $\mathcal{K}$, we show that $\phi \Theta$, $\phi \schrodinger_0 \mathcal{T} \Theta$ and $\psi \schrodinger_0 \mathcal{P}$ have this property.
    Clearly, the cluster locality of $\phi \Theta$ follows from the same reasoning as for $W_\Lambda$.
    To verify the cluster locality for operators $\phi \schrodinger_0 \mathcal{T} \Theta$ and $\psi \schrodinger_0 \mathcal{P}$, one notices that the Fourier supports of $\schrodinger_0 \mathcal{T} \Theta \Pi_\beta f$ and $\schrodinger_0 \mathcal{P} \Pi_\beta f$ are contained in $\mathbb{Z} \times Z^\beta_\Gamma$, so are the Fourier supports of $\phi \schrodinger_0 \mathcal{T} \Theta \Pi_\beta f$ and $\psi \schrodinger_0 \mathcal{P} \Pi_\beta f$ since $\phi$ and $\psi$ only depend on time.
    These Fourier supports are disjoint from $\mathbb{Z} \times Z^\alpha_\Gamma \supset N^\alpha_\Gamma$ since $Z^\alpha_\Gamma \cap Z^\beta_\Gamma = \emptyset$ when $\alpha \ne \beta$.
\end{proof}

The cluster locality may not preserve under composition.
However, the following lemma and Lemma~\ref{lem::est-far-from-paraboloid} shows that it almost holds.

\begin{lemma}
    \label{lem::cluster-locality-composition}
    Let $\mathcal{A} : X^b_\Gamma \to X^{b'}_\Gamma$ and $\mathcal{B} : X^{b'}_\Gamma \to X^{b''}_\Gamma$ (where $b,b',b'' \in \mathbb{R}$) be cluster $R$-local.
    Then for any $\alpha,\beta \in \mathscr{I}(\Gamma;R)$, there holds
    \begin{equation*}
        \Pi_\alpha \mathcal{A} \mathcal{B} \Pi_\beta
        = \Pi_\alpha \mathcal{A} \Pi^c \mathcal{B} \Pi_\beta.
    \end{equation*}
\end{lemma}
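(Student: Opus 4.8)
\emph{Proof proposal.} The plan is to insert, between $\mathcal{A}$ and $\mathcal{B}$ in the composition $\mathcal{A}\mathcal{B}$, the resolution of the identity coming from the cluster decomposition, and then to annihilate every resulting cross term using the two locality hypotheses. We take $\alpha\neq\beta$ (the case of interest in what follows).

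First I would record a resolution of the identity on the Bourgain spaces attached to $\Gamma$. The Fourier projectors $\Pi^c=\Pi_{N^c_\Gamma}$ and $\Pi_\gamma=\Pi_{N^\gamma_\Gamma}$ ($\gamma\in\mathscr{I}(\Gamma;R)$) are bounded, with norm $\le1$, on every Bourgain space; moreover the sets $N^c_\Gamma$ and $\{N^\gamma_\Gamma\}_{\gamma}$ are pairwise disjoint (for the $N^\gamma_\Gamma$ this is \eqref{eq::cluster-far-away-N}) and their union is $\mathbb{Z}\times\Gamma$, which is exactly the Fourier support allowed for elements of $X^s_\Gamma$. Hence, for any $s$ and any $h\in X^s_\Gamma$, one has $\Pi^c h+\sum_{\gamma\in\mathscr{I}(\Gamma;R)}\Pi_\gamma h=h$, the series converging in $X^s_\Gamma$ (dominated convergence for $\langle n+|k|^2\rangle^s\widehat h(n,k)$ in $\ell^2_{n,k}$).

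Now, given $f$ in the domain of $\mathcal{A}\mathcal{B}$, I would apply this with $h=\mathcal{B}\Pi_\beta f$ and use that $\Pi_\alpha\mathcal{A}$ is bounded, hence continuous and thus applicable term by term to the convergent decomposition of $h$, to obtain
\begin{equation*}
    \Pi_\alpha\mathcal{A}\mathcal{B}\Pi_\beta f
    =\Pi_\alpha\mathcal{A}\Pi^c\mathcal{B}\Pi_\beta f
    +\sum_{\gamma\in\mathscr{I}(\Gamma;R)}\Pi_\alpha\mathcal{A}\Pi_\gamma\mathcal{B}\Pi_\beta f .
\end{equation*}
In the $\gamma$-th summand, cluster $R$-locality of $\mathcal{A}$ gives $\Pi_\alpha\mathcal{A}\Pi_\gamma=0$ for $\gamma\neq\alpha$, while for $\gamma=\alpha$ cluster $R$-locality of $\mathcal{B}$ gives $\Pi_\gamma\mathcal{B}\Pi_\beta=\Pi_\alpha\mathcal{B}\Pi_\beta=0$ since $\alpha\neq\beta$. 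Every summand therefore vanishes, and the identity $\Pi_\alpha\mathcal{A}\mathcal{B}\Pi_\beta=\Pi_\alpha\mathcal{A}\Pi^c\mathcal{B}\Pi_\beta$ follows.

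The only step needing genuine care is the legitimacy of the insertion: since $\mathscr{I}(\Gamma;R)$ may be infinite, one must know that $\Pi^c+\sum_\gamma\Pi_\gamma$ converges strongly on the relevant $X^s_\Gamma$ and that the bounded operator $\Pi_\alpha\mathcal{A}$ may be moved inside the sum---both of which are immediate once the decomposition is viewed on the Fourier side. Everything else is a two-line bookkeeping of the locality relations, so I do not expect a real obstacle here; the lemma is a structural observation whose purpose is to be combined with Lemma~\ref{lem::est-far-from-paraboloid} (through the factor $\Pi^c$) in order to quantify the failure of cluster locality under composition.
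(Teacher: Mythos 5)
Your proposal is correct and is essentially the paper's own proof: insert the resolution of the identity $1=\Pi^c+\sum_\gamma\Pi_\gamma$ between $\mathcal{A}$ and $\mathcal{B}$, then annihilate the $\gamma$-terms using locality of $\mathcal{A}$ when $\gamma\ne\alpha$ and of $\mathcal{B}$ when $\gamma=\alpha$ (hence $\gamma\ne\beta$). Your extra care about the strong convergence of the possibly infinite sum, and your explicit restriction to $\alpha\ne\beta$ (which the paper's proof also tacitly assumes), are both sound refinements rather than deviations.
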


\begin{proof}
    Using the decomposition $1 = \sum_{\gamma \in \mathscr{I}} \Pi_\gamma + \Pi^c$, one obtains
    \begin{equation*}
        \Pi_\alpha \mathcal{A} \mathcal{B} \Pi_\beta
        = \sum_{\gamma \in \mathscr{I}} \Pi_\alpha \mathcal{A} \Pi_\gamma \mathcal{B} \Pi_\beta + \Pi_\alpha \mathcal{A} \Pi^c \mathcal{B} \Pi_\beta
        = \Pi_\alpha \mathcal{A} \Pi^c \mathcal{B} \Pi_\beta.
    \end{equation*}
    Indeed, since $\alpha \ne \beta$, for each $\gamma$, either $\gamma \ne \alpha$ or $\gamma \ne \beta$.
    If $\gamma \ne \alpha$, then $\Pi_\alpha \mathcal{A} \Pi_\gamma = 0$; if $\gamma \ne \beta$, then we use $\Pi_\gamma \mathcal{B} \Pi_\beta = 0$.
\end{proof}

\subsection{Cluster Decomposition }

In this section, let $u \in X^b_\Gamma$ is the solution to the periodized Schrödinger equation \eqref{eq::schrodinger-duhamel-inhomogeneous} with initial data $u_0 \in L^2_\Gamma$ and exterior force $g = 0$.

Take the cluster decomposition of $\mathbb{Z} \times \Gamma$ of scale $R \ge 1$.
Recall that $\Gamma^\alpha$ is the smallest primitive affine sublattice containing $Z^\alpha_\Gamma$.
Assume that $\Gamma^\alpha$ is parallel to $\Lambda^\alpha$.
Denote 
\begin{equation*}
    V_\alpha = V_{\Lambda^\alpha},
    \quad
    u_{0,\alpha} = \Pi_{Z^\alpha_\Gamma} u_0.
\end{equation*}
Then let $u_\alpha \in X^b_{\Gamma^\alpha}$ solve the equation
\begin{equation*}
    u_\alpha = \schrodinger_0 u_{0,\alpha} + \mathcal{E} \eta \mathfrak{D} V_\alpha u_\alpha.
\end{equation*}
We aim to prove the following proposition.

\begin{proposition}
    \label{prop::est-cluster-decoupling}
    There holds the estimate
    \begin{equation}
        \label{eq::est-cluster-decoupling}
        \bigl| \|\chi u\|_{L^2_{t,x}}^2 - \|\chi u_*\|_{\ell^2 L^2_{t,x}}^2 \bigr|
        \lesssim_{d,b,\chi,V} o(1)_{R \to \infty} \|u_0\|_{L^2_x}^2.
    \end{equation}
\end{proposition}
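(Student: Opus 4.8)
The plan is to reduce, in two steps, to a purely cluster-combinatorial statement for solutions of the approximate equation \eqref{eq::system-approximation}, whose trigonometric-polynomial coefficients interact \emph{exactly} with the cluster decomposition, and then to prove that statement by expanding in Neumann series.

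First I would replace $\chi$ by a trigonometric polynomial. Since $|\chi|^2\in Y^{b,-b}_{0,+}$, pick $\zeta\in\trig_{t,x}$ with $\deg\zeta\le R$ and $\||\chi|^2-|\zeta|^2\|_{Y^{b,-b}}=o(1)_{R\to\infty}$, arranging also $\|\zeta\|_{L^\infty}=o(R^\epsilon)$ by letting the truncation index grow slowly enough in $R$. For any $w\in X^b$ this gives $\bigl|\,\|\chi w\|_{L^2_{t,x}}^2-\|\zeta w\|_{L^2_{t,x}}^2\,\bigr|\lesssim o(1)\|w\|_{X^b}^2$; applying it to $w=u$ (using $\|u\|_{X^b}\lesssim\|u_0\|_{L^2_x}$ from \eqref{eq::est-energy-periodized}) and to each $w=u_\alpha$, and summing over $\alpha$ with the \emph{uniform} bound $\|u_\alpha\|_{X^b}\lesssim\|u_{0,\alpha}\|_{L^2_x}$ (the contraction constant depends only on $\|V_\alpha\|_{Y^{b,b-1+\epsilon}}\le\|V\|_{Y^{b,b-1+\epsilon}}$ by Lemma~\ref{lem::projected-norm-est}) and $\sum_\alpha\|u_{0,\alpha}\|_{L^2_x}^2=\|u_0\|_{L^2_x}^2$, reduces \eqref{eq::est-cluster-decoupling} to $\bigl|\,\|\zeta u\|^2-\sum_\alpha\|\zeta u_\alpha\|^2\,\bigr|\lesssim o(1)\|u_0\|^2$. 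Next I would replace $u,u_\alpha$ by the solutions $v,v_\alpha$ of the approximate equations ($v=\schrodinger_0 u_0+\mathcal{K}W_\Lambda v$, and $v_\alpha=\schrodinger_0 u_{0,\alpha}+\mathcal{K}W_{\Lambda^\alpha}v_\alpha\in X^b_{\Gamma^\alpha}$, with $W\in\trig_x$ and $\phi,\psi\in\trig_t$ of degree $\le R$ chosen so that the discrepancy $\delta$ is $o(1)$): the perturbative fixed-point comparison yields $\|u-v\|_{X^b}\lesssim\delta\|u_0\|_{L^2_x}$ and, uniformly in $\alpha$ (again via Lemma~\ref{lem::projected-norm-est}), $\|u_\alpha-v_\alpha\|_{X^b}\lesssim\delta\|u_{0,\alpha}\|_{L^2_x}$, so after Cauchy--Schwarz over $\alpha$ it suffices to prove $\bigl|\,\|\zeta v\|^2-\sum_\alpha\|\zeta v_\alpha\|^2\,\bigr|\lesssim o(1)\|u_0\|^2$.

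Then I would decouple $v$. Write $v=v^\flat+v^c$ with $v^\flat=\sum_\alpha\Pi_\alpha v$ and $v^c=\Pi^c v$; by Lemma~\ref{lem::est-far-from-paraboloid}, $\|v^c\|_{X^{b-\epsilon}}\lesssim R^{-\epsilon}\|u_0\|_{L^2_x}$. Since $\deg\zeta\le R$ and $\dist(N^\alpha_\Gamma,N^\beta_\Gamma)>10R$ for $\alpha\ne\beta$ by \eqref{eq::cluster-far-away-N}, the $R$-neighbourhoods of the $N^\alpha_\Gamma$ stay disjoint, so $\|\zeta v^\flat\|^2=\sum_\alpha\|\zeta\Pi_\alpha v\|^2$, while $\langle\zeta v^\flat,\zeta v^c\rangle$ and $\|\zeta v^c\|^2$ are $o(1)\|u_0\|^2$ because $\|\zeta v^c\|_{L^2_{t,x}}\le\|\zeta\|_{L^\infty}\|v^c\|_{L^2_{t,x}}\lesssim o(R^\epsilon)R^{-\epsilon}\|u_0\|_{L^2_x}$ while $\|\zeta v^\flat\|_{L^2_{t,x}}^2\le\||\zeta|^2\|_{Y^{b,-b}}\|v^\flat\|_{X^b}^2\lesssim\|u_0\|_{L^2_x}^2$. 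It then remains to match $\Pi_\alpha v$ with $v_\alpha$: I would expand $v=\sum_{m\ge0}(\mathcal{K}W_\Lambda)^m\schrodinger_0 u_0$ and $v_\alpha=\sum_{m\ge0}(\mathcal{K}W_{\Lambda^\alpha})^m\schrodinger_0 u_{0,\alpha}$ (geometrically convergent by Lemmas~\ref{lem::Duhamel-contraction} and~\ref{lem::K-est}) and compare term by term, using $\Pi_\alpha\schrodinger_0 u_0=\schrodinger_0 u_{0,\alpha}$, the cluster $R$-locality of $\mathcal{K}W_\Lambda$ over $\Gamma$ (Lemma~\ref{lem::cluster-locality-example}), the lattice identity $\Pi_\alpha W_\Lambda\Pi_\alpha=\Pi_\alpha W_{\Lambda^\alpha}\Pi_\alpha$ (valid since $Z^\alpha_\Gamma-Z^\alpha_\Gamma\subseteq\Lambda^\alpha\subseteq\Lambda$), and the almost-closure of cluster locality under composition (Lemma~\ref{lem::cluster-locality-composition}); each term of $\Pi_\alpha v-v_\alpha$ then carries at least one interior $\Pi^c$, costing $\gtrsim R^{-\epsilon}$ in $X^{b-\epsilon}$, and summing the series gives $\|\Pi_\alpha v-v_\alpha\|_{X^{b-\epsilon}}\lesssim R^{-\epsilon}\|u_{0,\alpha}\|_{L^2_x}$ uniformly in $\alpha$. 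Plugging this in, pairing through $\|\zeta\|_{L^\infty}^2=o(R^\epsilon)$ on the $X^{b-\epsilon}$ norms, and using Cauchy--Schwarz together with $\sum_\alpha\|u_{0,\alpha}\|_{L^2_x}^2=\|u_0\|_{L^2_x}^2$, would finish the proof.

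The hard part is this last step: the composition of cluster-local operators is only \emph{approximately} cluster-local, the leakage into the far-from-paraboloid region $N^c_\Gamma$ buys only the fixed power $R^{-\epsilon}$, and each multiplication by a degree-$\le R$ trigonometric polynomial can cost additional powers of $R$ (through $\|\zeta\|_{L^\infty}$, and through $W_\Lambda$ acting on $X^b$); one has to verify that, once the geometric decay of the Duhamel Neumann series is set against this fixed $\epsilon$-budget, the total error is genuinely $o(1)_{R\to\infty}$ --- in other words, that $v$ really does split into the cluster solutions $v_\alpha$ up to $o(1)_{R\to\infty}$ errors in the relevant quadratic form. Everything else is orthogonality together with the uniform energy and perturbation estimates already in hand.
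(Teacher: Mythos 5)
Your proposal is correct and follows essentially the same route as the paper: approximate $\chi$, $V$, $\eta$ by trigonometric polynomials of degree $\le R$, pass to the approximate equation via the perturbative energy estimates, decouple the quadratic form using the cluster separation \eqref{eq::cluster-far-away-N} together with the $\Pi^c$ gain of Lemma~\ref{lem::est-far-from-paraboloid}, and control the mismatch between $\Pi_\alpha v$ and the cluster solutions through cluster locality (Lemmas~\ref{lem::cluster-locality-example} and~\ref{lem::cluster-locality-composition}). The only difference is organizational: where you unroll a Neumann series and compare term by term, the paper writes a single fixed-point equation for the difference $w_\alpha = u_\alpha - \Pi_\alpha v$ with a commutator source $\mathcal{J}_\alpha v$, which settles your ``fixed $\epsilon$-budget'' worry once and for all --- the $\Pi^c$ gain is always harvested between $W' \in Y^{b,b-1+\epsilon}$ and $\mathcal{K} \in Y^{b-1,b}$, so no operator is iterated on a weakened space and no power of $\|\zeta\|_{L^\infty_{t,x}}$ or of $R$ is ever incurred.
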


For $R \ge 1$, choose $\phi, \psi \in \trig_t$, $W \in \trig_x$, and $\zeta \in \trig_{t,x}$, all of degrees $\le R$, such that the following quantities are all of order $o(1)_{R \to \infty}$:
\begin{equation*}
    \|\phi-\mathcal{E}(\eta)\|_{H^b_t},
    \ \ 
    \|\psi-\mathcal{E}(\eta)\|_{H^b_t},
    \ \ 
    \|V-W\|_{Y^{b,b-1}},
    \ \ 
    \||\chi|^2-|\zeta|^2\|_{Y^{b,-b}}.
\end{equation*}
Let $v \in X^b_\Gamma$ solve \eqref{eq::system-approximation} with $u_0 \in L^2_\Gamma$ and $g = 0$.
Then $w = u - v$ satisfies
\begin{equation*}
    w = \mathcal{E} \eta \mathfrak{D} V_\Lambda w + \mathcal{E} \eta \mathfrak{D} (V_\Lambda - W_\Lambda) v + (\mathcal{K}_{\mathcal{E}(\eta) - \phi,\mathcal{E}(t \eta) - \psi}) W_\Lambda v.
\end{equation*}
By the estimates \eqref{eq::est-energy-periodized} and \eqref{eq::est-energy-approx}, there holds
\begin{equation*}
    \|\chi u - \chi v\|_{L^2_{t,x}}^2
    \le \|\chi\|_{Y^{b,-b}} \|u - v\|_{X^b}^2
    \lesssim_{d,b,\chi, V} o(1)_{R\to \infty} \|u_0\|_{L^2_x}^2.
\end{equation*}
Let $v_\alpha = \Pi_\alpha v$, $v^c = \Pi^c v$ and $\tilde{v} = v - v^c$.
By orthogonality and \eqref{eq::cluster-far-away-N}, one has $\|\zeta \tilde{v}\|_{L^2_{t,x}} = \|\zeta v_*\|_{\ell^2 L^2_{t,x}}$.
Therefore, by the triangular inequality,
\begin{align*}
    \bigl| \|\chi v\|_{L^2_{t,x}}^2 & - \|\chi v_*\|_{\ell^2 L^2_{t,x}}^2 \bigr| \\
    & \le \|\chi v^c\|_{L^2_{t,x}}^2 + \bigl|\|\chi \tilde{v}\|_{L^2_{t,x}}^2 - \|\zeta \tilde{v}\|_{L^2_{t,x}}^2 \bigr| + \bigl| \|\chi v_*\|_{\ell^2 L^2_{t,x}}^2 - \|\zeta v_*\|_{\ell^2 L^2_{t,x}}^2 \bigr| \\
    & \lesssim_\chi \|v^c\|_{X^{b-\epsilon}}^2 + o(1)_{R \to \infty} \bigl(\|\tilde{v}\|_{L^2_{t,x}}^2 + \|v_*\|_{\ell^2 L^2_{t,x}}^2 \bigr)  \\
    & \lesssim_{d,b,\chi, V} o(1)_{R \to \infty} \|v\|_{L^2_{t,x}}^2
    \lesssim_{d,b,\chi, V} o(1)_{R \to \infty} \|v_0\|_{L^2_x}^2.
\end{align*}

To finish the proof of Proposition~\ref{prop::est-cluster-decoupling}, it remains to show that
\begin{equation*}
    \|\chi u_* - \chi v_*\|_{\ell^2 L^2_{t,x}}
    \lesssim_{d,b,\chi, V} o(1)_{R \to \infty} \|u_0\|_{L^2_x}.
\end{equation*}
For this purpose, let us derive the equation satisfied by $w_\alpha = u_\alpha - v_\alpha$.

\begin{lemma}
    For each $\alpha \in \mathscr{I}(\Gamma;R)$, there holds
    \begin{equation*}
        w_\alpha = \mathcal{E} \eta \mathfrak{D} V_\alpha w_\alpha + \mathcal{J}_\alpha v
    \end{equation*}
    where, denoting $W_\alpha = W_{\Lambda^\alpha}$, the operator $\mathcal{J}_\alpha$ is defined by
    \begin{equation*}
        \mathcal{J}_\alpha = (\mathcal{E} \eta \mathfrak{D} V_\alpha - \mathcal{K} W_\alpha) \Pi_\alpha - \bigl([\Pi_\alpha,\mathcal{K}W_\alpha] \Pi_\alpha + \Pi_\alpha [\Pi_\alpha,\mathcal{K} W_\Lambda]\bigr).
    \end{equation*}
\end{lemma}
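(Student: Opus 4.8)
The plan is to obtain the equation for $w_\alpha$ by subtracting the equation satisfied by $v_\alpha=\Pi_\alpha v$ from the one satisfied by $u_\alpha$, and then to recognise the resulting error term as the operator $\mathcal{J}_\alpha$ by elementary projector algebra. The one genuinely nontrivial input is a Fourier-support cancellation, stemming from the fact that $Z^\alpha_\Gamma$ lies in the affine sublattice $\Gamma^\alpha$.

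\emph{Localising the two equations.} Since $u_0\in L^2_\Gamma$ and the forcing vanishes, Remark~\ref{rem::Gamma-localized-solution} gives $v\in X^b_\Gamma$. The free wave $\schrodinger_0 u_0$ has space-time Fourier support inside $\Sigma(\Gamma)=\bigcup_\beta Q^\beta_\Gamma$; since $Q^\beta_\Gamma\subset N^\beta_\Gamma$ and, by \eqref{eq::cluster-far-away-N}, $\dist(N^\alpha_\Gamma,N^\beta_\Gamma)>10R$ for $\beta\ne\alpha$, only the part supported on $Q^\alpha_\Gamma$ survives $\Pi_\alpha$, so
\begin{equation*}
    \Pi_\alpha\schrodinger_0 u_0=\Pi_{Q^\alpha_\Gamma}\schrodinger_0 u_0=\schrodinger_0\Pi_{Z^\alpha_\Gamma}u_0=\schrodinger_0 u_{0,\alpha}.
\end{equation*}
Applying $\Pi_\alpha$ to the approximate equation \eqref{eq::system-approximation} (with $g=0$) therefore gives $v_\alpha=\schrodinger_0 u_{0,\alpha}+\Pi_\alpha\mathcal{K}W_\Lambda v$. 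Subtracting this from $u_\alpha=\schrodinger_0 u_{0,\alpha}+\mathcal{E}\eta\mathfrak{D}V_\alpha u_\alpha$, inserting $u_\alpha=w_\alpha+v_\alpha$, and using $V_\alpha v_\alpha=V_\alpha\Pi_\alpha v$ yields
\begin{equation*}
    w_\alpha=\mathcal{E}\eta\mathfrak{D}V_\alpha w_\alpha+\bigl(\mathcal{E}\eta\mathfrak{D}V_\alpha\Pi_\alpha-\Pi_\alpha\mathcal{K}W_\Lambda\bigr)v.
\end{equation*}

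\emph{Matching with $\mathcal{J}_\alpha$.} Expanding the two commutators in the definition of $\mathcal{J}_\alpha$ and using $\Pi_\alpha^2=\Pi_\alpha$, one checks that
\begin{equation*}
    \mathcal{J}_\alpha=\mathcal{E}\eta\mathfrak{D}V_\alpha\Pi_\alpha-\Pi_\alpha\mathcal{K}W_\Lambda+\Pi_\alpha\mathcal{K}(W_\Lambda-W_\alpha)\Pi_\alpha,
\end{equation*}
so the lemma reduces to the single claim that the residual term $\Pi_\alpha\mathcal{K}(W_\Lambda-W_\alpha)\Pi_\alpha$ vanishes. For this I would track spatial Fourier supports. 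First $\Lambda^\alpha\subseteq\Lambda$, since $Z^\alpha_\Gamma\subset\Gamma^\alpha\subseteq\Gamma$ (using that $\Gamma$ is primitive), so $W_\Lambda-W_\alpha=\Pi_\Lambda W-\Pi_{\Lambda^\alpha}W$ has spatial Fourier support in $\Lambda\setminus\Lambda^\alpha$. Next, $\Pi_\alpha f$ has spatial Fourier support in $Z^\alpha_\Gamma$; multiplication by $W_\Lambda-W_\alpha$ moves it into $Z^\alpha_\Gamma+(\Lambda\setminus\Lambda^\alpha)$; and each of the three blocks $\phi\Theta$, $\phi\schrodinger_0\mathcal{T}\Theta$, $\psi\schrodinger_0\mathcal{P}$ making up $\mathcal{K}$ leaves the spatial Fourier support unchanged (the factors $\phi,\psi$ act only in $t$, $\Theta$ is a Fourier multiplier, and $\mathcal{T}$, $\mathcal{P}$, $\schrodinger_0$ act diagonally in the spatial frequency). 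Thus $\mathcal{K}(W_\Lambda-W_\alpha)\Pi_\alpha f$ still has spatial Fourier support in $Z^\alpha_\Gamma+(\Lambda\setminus\Lambda^\alpha)$, and a further $\Pi_\alpha$ restricts it to $Z^\alpha_\Gamma\cap\bigl(Z^\alpha_\Gamma+(\Lambda\setminus\Lambda^\alpha)\bigr)$, which is empty: if $k_1+\mu=k_2$ with $k_1,k_2\in Z^\alpha_\Gamma$ then $\mu=k_2-k_1\in Z^\alpha_\Gamma-Z^\alpha_\Gamma\subseteq\Lambda^\alpha$, because $Z^\alpha_\Gamma$ lies in the affine sublattice $\Gamma^\alpha$ parallel to $\Lambda^\alpha$, contradicting $\mu\in\Lambda\setminus\Lambda^\alpha$. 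Hence $\Pi_\alpha\mathcal{K}(W_\Lambda-W_\alpha)\Pi_\alpha=0$ and the lemma follows.

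I do not expect a genuine obstacle here: the statement is essentially bookkeeping. The only point requiring a little care is the final cancellation, where it matters that the intrinsic cluster equation carries $W_\alpha=\Pi_{\Lambda^\alpha}W$ while the global one carries $W_\Lambda=\Pi_\Lambda W$, and that the extra modes in their difference are precisely those translating $Z^\alpha_\Gamma$ off itself, hence annihilated by the sandwiching projectors.
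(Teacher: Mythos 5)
Your proof is correct and follows essentially the same route as the paper: apply $\Pi_\alpha$ to the approximate equation, reduce everything by projector algebra to the single identity $\Pi_\alpha\mathcal{K}W_\Lambda\Pi_\alpha=\Pi_\alpha\mathcal{K}W_\alpha\Pi_\alpha$, and conclude. The only difference is that you actually verify this identity (via the Fourier-support argument using $\Lambda^\alpha\subseteq\Lambda$ and $Z^\alpha_\Gamma-Z^\alpha_\Gamma\subseteq\Lambda^\alpha$), which the paper leaves to the reader as "not difficult to verify"; your verification is correct.
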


\begin{proof}
    The lemma would follow immediately after deriving the equation for $v_\alpha$.
    Applying $\Pi_\alpha$ to both sides of \eqref{eq::system-approximation} with $g=0$ and using the identity
    \begin{equation*}
        \Pi_\alpha \mathcal{K} W_\Lambda \Pi_\alpha = \Pi_\alpha \mathcal{K} W_\alpha \Pi_\alpha
    \end{equation*}
    which is not difficult to verify, one obtain that
    \begin{align*}
        v_\alpha - \schrodinger_0 u_{0,\alpha}
        &  = \Pi_\alpha \mathcal{K} W_\Lambda v_\alpha
        = \Pi_\alpha \mathcal{K} W_\Lambda v_\alpha + \Pi_\alpha [\Pi_\alpha,\mathcal{K} W_\Lambda] v \\
        & = \mathcal{K} W_\alpha v_\alpha + [\Pi_\alpha,\mathcal{K}W_\alpha] v_\alpha + \Pi_\alpha [\Pi_\alpha,\mathcal{K} W_\Lambda] v.
        \qedhere
    \end{align*}
\end{proof}

To understand the commutators in the definition of $\mathcal{J}_\alpha$, let us notice that by Lemmas~\ref{lem::cluster-locality-example} and~\ref{lem::cluster-locality-composition}, for $W' \in \{W_\Lambda, W_\alpha\}$, there holds
    \begin{align*}
        &\phantom{{}={}}
        [\Pi_\alpha,\mathcal{K} W']
        = \Pi_\alpha\mathcal{K} W' (1-\Pi_\alpha) - (1-\Pi_\alpha) \mathcal{K} W' \Pi_\alpha \\
        & = \Pi_\alpha\mathcal{K} W' (1-\Pi_\alpha- \Pi^c) - (1-\Pi_\alpha - \Pi^c) \mathcal{K} W' \Pi_\alpha + \Pi_\alpha\mathcal{K} W' \Pi^c - \Pi^c \mathcal{K} W' \Pi_\alpha \\
        & = \Pi_\alpha \mathcal{K} (\Pi^c W' (1- \Pi^c) +  W' \Pi^c )
        - ((1-\Pi^c) \mathcal{K} \Pi^c + \Pi^c \mathcal{K}) W' \Pi_\alpha.
    \end{align*}
From this, by Lemma~\ref{lem::est-far-from-paraboloid} and Lemma~\ref{lem::K-est}, we obtain
\begin{equation*}
    \|\chi w_*\|_{\ell^2 L^2_{t,x}}^2
    \lesssim \|\chi\|_{Y^{b,-b}} \|\mathcal{J}_* v\|_{\ell^2 X^b}^2
    \lesssim_{d,b,\chi,V} o(1)_{R \to \infty} \|u_0\|_{L^2_x}^2.
\end{equation*}

\subsection{Mathematical Induction}

As in the case of the free propagator, we use mathematical induction to derive weak observability estimates.

\begin{lemma}
    If $\chi \in L^2_{t,x}$, $\Lambda_0 = \{0\}$, and $\dim \supp \widehat{u}_0 = 0$, then
    \begin{equation*}
        \|\chi \schrodinger_{\Lambda_0} u_0\|_{L^2(\mathbb{R}_t\times\torus^d_x)} = \|\chi\|_{L^2_{t,x}} \|u_0\|_{L^2_x}.
    \end{equation*}
\end{lemma}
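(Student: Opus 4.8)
The plan is to observe that the hypothesis $\Lambda_0 = \{0\}$ trivializes the potential. Since $V_{\Lambda_0} = \Pi_{\{0\}} V$ retains only the zeroth Fourier mode of $V$, the function $V_{\Lambda_0}$ is the constant $c = \fint_{\torus^d} V$, which is real because $V$ is real-valued. Hence $\hamiltonian_{\Lambda_0} = -\Delta + c$, which is (trivially) essentially self-adjoint, and each exponential $e^{ik\cdot x}$ with $k \in \mathbb{Z}^d$ is an eigenfunction of $\hamiltonian_{\Lambda_0}$ with eigenvalue $|k|^2 + c$.

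Next I would exploit $\dim \supp \widehat{u}_0 = 0$ exactly as in the proof of Lemma~\ref{lem::zero-dim-observability-free}: this forces $\widehat{u}_0$ to be supported at a single lattice point $k \in \mathbb{Z}^d$, so $u_0 = c_k e^{ik\cdot x}$ for some $c_k \in \mathbb{C}$. Therefore $e^{-it\hamiltonian_{\Lambda_0}} u_0(t,x) = e^{-it(|k|^2 + c)} c_k e^{ik\cdot x}$, and since the scalar $e^{-it(|k|^2+c)}$ is unimodular one gets
\[
\bigl|e^{-it\hamiltonian_{\Lambda_0}} u_0(t,x)\bigr| = |c_k| = \bigl|e^{it\Delta} u_0(t,x)\bigr|, \qquad (t,x) \in \torus^{1+d}.
\]

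It then remains to integrate: since $[-\pi,\pi]$ is one full period of $\torus$ and the modulus above is constant in $(t,x)$,
\[
\bigl\|\chi\, e^{-it\hamiltonian_{\Lambda_0}} u_0\bigr\|_{L^2_{t,x}([-\pi,\pi]\times\torus^d)} = \bigl\|\chi\, e^{it\Delta} u_0\bigr\|_{L^2_{t,x}} = \|\chi\|_{L^2_{t,x}}\|u_0\|_{L^2_x},
\]
where the last equality is precisely Lemma~\ref{lem::zero-dim-observability-free} (equivalently, integrate $|\chi|^2 |c_k|^2$ over $\torus^{1+d}$ and use $\|u_0\|_{L^2_x} = |c_k|$). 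There is no genuine obstacle here: the only point worth flagging is the first one, namely that $\Lambda_0 = \{0\}$ collapses $\hamiltonian_{\Lambda_0}$ to the free Laplacian shifted by a real constant, so that on a zero-dimensional Fourier support the propagator acts by a pure phase and the observation factorizes exactly, just as it does for $e^{it\Delta}$.
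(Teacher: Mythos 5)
Your proof is correct and follows essentially the same route as the paper: the paper likewise observes that $V_{\Lambda_0}$ is the constant $\fint V$, conjugates away the resulting unimodular phase to reduce to the free propagator, and invokes Lemma~\ref{lem::zero-dim-observability-free}. Your version merely unfolds that lemma's single-mode computation explicitly, which changes nothing of substance.
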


\begin{proof}
    Let $v(t) = e^{-it m} e^{-it\hamiltonian_{\Lambda_0}} u_0$, where $m = (2\pi)^{-d/2} \widehat{V}(0)$.
    Then $v$ satisfies the free Schrödinger equation and we conclude by Lemma~\ref{lem::zero-dim-observability-free}.
\end{proof}

\begin{proposition}
    \label{prop::weak-observability}
    Assume that \eqref{eq::observability-Lambda-V-general} holds for all primitive sublattices of dimensions $\le \nu \le d-1$.
    Let $\Lambda$ be a $(\nu+1)$-dimensional primitive sublattice.
    For every $\Gamma \in \affine_\Lambda$, there exists a finite subset $B_\Gamma \subset \Gamma$ such that, if $u_0 \in L^2_\Gamma$, then
    \begin{equation}
        \label{eq::weak-observability}
        \|u_0\|_{L^2_x}^2
        \lesssim_{d,\chi,V} \|\chi \schrodinger_\Lambda u_0\|_{L^2(\mathbb{R}_t \times \torus^d_x)}^2
        + \|\Pi_{B_\Gamma} u_0\|_{L^2_x}^2.
    \end{equation}
\end{proposition}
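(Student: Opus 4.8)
The plan is to transcribe the proof of Proposition~\ref{prop::weak-observability-V=0} into the setting with a potential: the elementary orthogonality step is replaced by the cluster decoupling estimate of Proposition~\ref{prop::est-cluster-decoupling}, and the induction is closed through the hypothesis \eqref{eq::observability-Lambda-V-general} on primitive sublattices of dimension $\le\nu$.

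First I fix the cutoff $\eta$ furnished by Proposition~\ref{prop::schrodinger-duhamel-well-posedness}, so that $\eta\equiv 1$ on a fixed interval $I_0\ni 0$. Because $\|V_{\Lambda'}\|_{Y^{b,b-1+\epsilon}}\le\|V\|_{Y^{b,b-1+\epsilon}}$ for every sublattice $\Lambda'$ by the translation invariance of this norm (Lemma~\ref{lem::projected-norm-est}), the contraction constant — hence the admissible $\eta$ and the interval $I_0$ — can be chosen once and for all, independently of the sublattice. Using that each $\hamiltonian_{\Lambda'}$ is autonomous and that $\chi$ is $2\pi$-periodic in time, a preliminary time translation followed by multiplication of $\chi$ by a smooth temporal bump with time support in $I_0$ — which only shrinks the observation and, by boundedness of smooth temporal multipliers on $X^b$, preserves $|\chi|^2\in Y^{b,-b}_{0,+}\setminus 0$ — reduces matters to the case where the time support of $\chi$ lies in $I_0$. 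On $I_0$ every solution of a periodized equation of the form \eqref{eq::schrodinger-duhamel-inhomogeneous} coincides with the genuine propagator; hence for such $u$ one has $\|\chi u\|_{L^2_{t,x}}=\|\chi e^{-it\hamiltonian_\Lambda}u_0\|_{L^2_{t,x}([-\pi,\pi]\times\torus^d)}$, and likewise for the sub-cluster equations.

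Now fix a primitive sublattice $\Lambda$ of dimension $\nu+1$, a coset $\Gamma\in\affine_\Lambda$, and $u_0\in L^2_\Gamma$; let $u\in X^b_\Gamma$ solve \eqref{eq::schrodinger-duhamel-inhomogeneous} with datum $u_0$ and $g=0$, and take the cluster decomposition of $\mathbb{Z}\times\Gamma$ at scale $R$. With $u_{0,\alpha}=\Pi_{Z^\alpha_\Gamma}u_0$ and $u_\alpha\in X^b_{\Gamma^\alpha}$ solving $u_\alpha=\schrodinger_0 u_{0,\alpha}+\mathcal{E}\eta\mathfrak{D}V_\alpha u_\alpha$, Proposition~\ref{prop::est-cluster-decoupling} gives
\begin{equation*}
    \bigl|\,\|\chi u\|_{L^2_{t,x}}^2-\|\chi u_*\|_{\ell^2 L^2_{t,x}}^2\,\bigr|\lesssim_{d,b,\chi,V} o(1)_{R\to\infty}\,\|u_0\|_{L^2_x}^2.
\end{equation*}
If $\alpha\in\mathscr{I}_\sharp(\Gamma;R)$, then $\Lambda^\alpha$ is primitive of dimension $\le\nu$, so the induction hypothesis \eqref{eq::observability-Lambda-V-general} — whose constant depends only on $d,\chi,V$, hence is uniform in $\alpha$ — applies to $\hamiltonian_{\Lambda^\alpha}$ and $u_{0,\alpha}\in L^2_{\Gamma^\alpha}$ and yields $\|u_{0,\alpha}\|_{L^2_x}\lesssim_{d,\chi,V}\|\chi u_\alpha\|_{L^2_{t,x}}$. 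Summing these over $\mathscr{I}_\sharp(\Gamma;R)$ and using the orthogonality of the pieces $\Pi_{Z^\alpha_\Gamma}u_0$ together with the displayed decoupling bound gives
\begin{equation*}
    \|u_0\|_{L^2_x}^2\lesssim_{d,\chi,V}\|\chi u\|_{L^2_{t,x}}^2+\|\Pi_{B_\Gamma}u_0\|_{L^2_x}^2+o(1)_{R\to\infty}\,\|u_0\|_{L^2_x}^2,
\end{equation*}
with $B_\Gamma=\bigcup_{\alpha\in\mathscr{I}_\flat(\Gamma;R)}Z^\alpha_\Gamma$. By Lemma~\ref{lem::lattice-structure} the set $\proj_\Lambda B_\Gamma$ is contained in a fixed ball, and since $\proj_\Lambda$ is injective on $\Gamma$ the set $B_\Gamma$ is finite. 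Choosing $R$ large enough — with a threshold depending only on $d,\chi,V$, not on $\Gamma$ — absorbs the remainder term, and rewriting $\|\chi u\|_{L^2_{t,x}}$ as $\|\chi e^{-it\hamiltonian_\Lambda}u_0\|_{L^2_{t,x}([-\pi,\pi]\times\torus^d)}$ as explained above yields \eqref{eq::weak-observability}.

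I expect the genuinely hard input to be Proposition~\ref{prop::est-cluster-decoupling}, already established above: there the potential couples distinct clusters through the Duhamel iteration, and the decoupling gain arises only from the off-paraboloid estimate of Lemma~\ref{lem::est-far-from-paraboloid} applied to the commutators $[\Pi_\alpha,\mathcal{K}W_\alpha]$ and $\Pi_\alpha[\Pi_\alpha,\mathcal{K}W_\Lambda]$. Within the present argument, the main subtlety — which has no analogue when $V=0$, where the free propagator is already $2\pi$-periodic — is keeping the cutoff $\eta$, the scale $R$, and all implied constants uniform over the infinitely many cosets $\Gamma$ of a fixed dimension; the translation-invariance bound of Lemma~\ref{lem::projected-norm-est} handles this here, whereas the finiteness of $\affine_R/{\sim}$ (Lemma~\ref{lem::A_R-finite}) will handle it in the uniqueness--compactness step that subsequently removes $\|\Pi_{B_\Gamma}u_0\|_{L^2_x}^2$.
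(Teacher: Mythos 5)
Your proof is correct and follows essentially the same route as the paper's: apply the cluster decoupling estimate of Proposition~\ref{prop::est-cluster-decoupling}, close the induction on the subcritical clusters $\mathscr{I}_\sharp(\Gamma;R)$ via \eqref{eq::observability-Lambda-V-general}, collect the remaining clusters into the finite set $B_\Gamma$ (finite by Lemma~\ref{lem::lattice-structure}), and absorb the $o(1)_{R\to\infty}$ error by taking $R$ large. The only difference is that you make explicit two points the paper leaves implicit — the uniform choice of $\eta$ across sublattices via Lemma~\ref{lem::projected-norm-est} and the identification of the periodized solution with $e^{-it\hamiltonian_\Lambda}u_0$ on the region where $\eta\equiv 1$ — both of which are handled correctly.
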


\begin{proof}
    It suffices to establish this estimate when $\chi$ satisfies in addition the support condition $\supp \chi \subset \{\eta=1\} \times \torus^d_x$.
    It remains to prove
    \begin{equation*}
        \|u_0\|_{L^2_x}^2
        \lesssim_{d,\chi,V} \|\chi u\|_{L^2_{t,x}}^2
        + \|\Pi_{B_\Gamma} u_0\|_{L^2_x}^2,
    \end{equation*}
    when $u$ solves the homogeneous periodized Schrödinger equation \eqref{eq::schrodinger-duhamel-inhomogeneous} with initial data $u_0$ and vanishing exterior force $g=0$.
    
    Let $R$ be sufficiently large and proceed as the proof of Proposition~\ref{prop::weak-observability-V=0}.
    The weak observability \eqref{eq::weak-observability} follows immediately from \eqref{eq::est-cluster-decoupling}.
    Indeed, if $\alpha \in \mathscr{I}_\sharp(\Gamma;R)$, then $\|u_{0,\alpha}\|_{L^2_x} \lesssim_{d,\chi,V} \|\chi u_\alpha\|_{L^2_{t,x}}$ due to the induction hypothesis.
    Summing this over $\alpha \in \mathscr{I}(\Gamma;R)$ and invoking \eqref{eq::est-cluster-decoupling}, we obtain
    \begin{align*}
        \|\chi u\|_{L^2_{t,x}}^2 
        & \gtrsim_{d,\chi,V} \|\chi u_*\|_{\ell^2 L^2_{t,x}}^2 + o(1)_{R \to \infty} \|u_0\|_{L^2_x}^2 \\
        & \gtrsim_{d,\chi,V} \|(1-\Pi_{B_\Gamma}) u_0\|_{L^2_x}^2 + o(1)_{R \to \infty} \|u_0\|_{L^2_x}^2.
        \qedhere
    \end{align*}
\end{proof}

\subsection{Uniqueness-Compactness Argument}

\label{sec::unique-compactness-argument}

Now we derive the observability for all primitive sublattices of dimension $\nu+1$.
First notice that, similarly to \eqref{eq::galilean-free-propagator-relation}, for $u(t) = e^{-it\hamiltonian_\Lambda} u_0$ with $u_0 \in L^2_\Gamma$, if $p \in \Lambda^\perp$, then
\begin{equation*}
    e^{-it\hamiltonian_\Lambda} (e^{ip\cdot x}u_0) = (\galilean_p u)(t,\cdot).
\end{equation*}
Therefore, we may proceed by contradiction as in \S\ref{sec::uniqueness-compactness-V=0}.
This yields a primitive $\Lambda$ of dimension $\nu+1$, $\Gamma \in \affine_\Gamma$, and a sequence of initial data $u_{0,j} \in L^2_\Gamma$ such that 
\begin{equation*}
    \|u_{0,j}\|_{L^2_x} = 1,
    \quad
    \lim_{j \to \infty} \|\chi u_j\|_{L^2(\mathbb{R}_t\times\torus^d_x)} = 0,
\end{equation*}
where $u_j = \schrodinger_\Lambda u_{0,j}$.
We may further assume that $u_{0,j} \rightharpoonup u_0$ weakly in $L^2_x$ and $\mathcal{E}(\eta u_j) \rightharpoonup u$ weakly in $X^b$ where $b > 1/2$ and $\eta \in C_c^\infty(\mathbb{R})$ is chosen in Proposition~\ref{prop::schrodinger-duhamel-well-posedness}.
Clearly, if $|t|$ is sufficiently small, then $u(t) = e^{-it\hamiltonian_\Lambda} u_0$.

When $\supp \chi \subset \{\eta=1\} \times \torus^d_x$, by Proposition~\ref{prop::weak-observability} and the same reasoning as in \S\ref{sec::uniqueness-compactness-V=0}, we deduce that, for some finite subset $B_\Gamma \subset \Gamma$, there holds
\begin{equation*}
    \|\Pi_{B_\Gamma} u_0\|_{L^2}
    = \lim_{j \to \infty} \|\Pi_{B_\Gamma} u_{0,j}\|_{L^2}
    \gtrsim_{d,\chi,V} 1,
\end{equation*}
By Lemma~\ref{lem::semi-continuity}, we have $\|\chi u\|_{L^2_{t,x}} = 0$.
Hence $u$ vanishes almost everywhere in $\supp \chi$.

On the other hand, since $u_0 \ne 0$, we have $ u \ne 0$.
It remains to show that Schrödinger waves vanishing on a space-time domain with positive Lebesgue measure must vanish identically.
First, with an argument similar to the proof of Lemma~\ref{lem::unique-continuation-V=0} we obtain a temporal holomorphic extension of Schrödinger waves.

\begin{lemma}
    \label{lem::schrodinger-holomorphic}
    For $d \ge 1$, $V \in L^q_x$ with $q \in I_d$, $u_0 \in L^2_x$, and almost all $x \in \torus^d$, $u_x : t \mapsto e^{-it\hamiltonian_V} u_0(x)$ extends to a holomorphic function in $H^2(\mathbb{C}^-)$.
\end{lemma}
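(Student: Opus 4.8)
I would prove this by transplanting the argument for Lemma~\ref{lem::unique-continuation-V=0} to $\hamiltonian_V$, the only genuinely new input being the spectral picture of $\hamiltonian_V$. \emph{Step 1 (spectral setup).} For $q\in I_d$, multiplication by $V$ is a relatively form-compact perturbation of $-\Delta$ on $\torus^d$ (by Sobolev embedding together with $\|V\bm{1}_{|V|>M}\|_{L^q}\to0$), so $\hamiltonian_V$ is self-adjoint, bounded below, say $\hamiltonian_V\ge\lambda_0$, and has compact resolvent; hence its spectrum is a sequence $\lambda_0\le\lambda_1\le\cdots\to+\infty$ with an orthonormal eigenbasis $(\varphi_n)_{n\ge0}$ of $L^2_x$. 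By the min--max principle together with Weyl's law for $-\Delta$ one gets $\lambda_n\gtrsim_d n^{2/d}-C$, so $\sum_{n}e^{-s\lambda_n}<\infty$ for every $s>0$, i.e.\ $e^{-s\hamiltonian_V}$ is trace class.

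\emph{Step 2 (pointwise meaning and holomorphic extension).} Fix $L^2$-representatives of the $\varphi_n$ and set $c_n=\langle u_0,\varphi_n\rangle$. By Tonelli, $\sum_n|c_n|^2|\varphi_n(x)|^2<\infty$ for a.e.\ $x$, and then, by Cauchy--Schwarz and Step~1, $\sum_n|c_n\varphi_n(x)|\,e^{-s\lambda_n}<\infty$ for a.e.\ $x$ and every $s>0$. For such $x$ define
\[
    u_x(t)=\sum_{n\ge0}c_n\varphi_n(x)\,e^{-it\lambda_n},\qquad \operatorname{Im}t\le0.
\]
Since $|e^{-it\lambda_n}|=e^{(\operatorname{Im}t)\lambda_n}\lesssim_{V}e^{-\epsilon\max(\lambda_n,0)}$ on $\{\operatorname{Im}t\le-\epsilon\}$, the series converges absolutely and locally uniformly on $\mathbb{C}^-=\{\operatorname{Im}t<0\}$, so $u_x$ is holomorphic there and, for real $t$, coincides with the a.e.-representative of $e^{-it\hamiltonian_V}u_0$ (one checks the matching through $\|e^{-is\hamiltonian_V}u_0-u_0\|_{L^2_x}\to0$ as $s\downarrow0$). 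More invariantly, because $\hamiltonian_V$ is semibounded the complexified propagator $e^{-it\hamiltonian_V}$ is bounded on $L^2_x$ for $\operatorname{Im}t\le0$, with $\|e^{-it\hamiltonian_V}\|_{L^2_x\to L^2_x}\lesssim_V1$ on $\{-1\le\operatorname{Im}t\le0\}$; this is the potential-case substitute for the one-sided paraboloid spectrum exploited in Lemma~\ref{lem::unique-continuation-V=0}.

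\emph{Step 3 (Hardy bound).} Writing $t=\sigma-is$ with $s\in(0,1]$ we have $u_x(\sigma-is)=\bigl(e^{-i\sigma\hamiltonian_V}[e^{-s\hamiltonian_V}u_0]\bigr)(x)$ as an identity in $L^2_x$, and $\|e^{-s\hamiltonian_V}u_0\|_{L^2_x}\le e^{-s\lambda_0}\|u_0\|_{L^2_x}\lesssim_V\|u_0\|_{L^2_x}$. Hence by unitarity of $e^{-i\sigma\hamiltonian_V}$ and Fubini,
\[
    \int_{\torus^d}\int_{-\pi}^{\pi}|u_x(\sigma-is)|^2\,\diff\sigma\,\diff x
    =2\pi\,\|e^{-s\hamiltonian_V}u_0\|_{L^2_x}^2
    \lesssim_V\|u_0\|_{L^2_x}^2
\]
uniformly in $s$. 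Applying Fatou's lemma along a sequence $s_j\downarrow0$ gives, for a.e.\ $x$, that the slices $u_x(\,\cdot\,-is_j)$ stay bounded in $L^2(-\pi,\pi)$; combined with the holomorphy and the uniform strip bound of Step~2 this places $u_x$ in $H^2(\mathbb{C}^-)$ (in the same sense as in the proof of Lemma~\ref{lem::unique-continuation-V=0}), which is exactly what one needs to invoke Fatou's theorem and the Lusin--Privalov uniqueness theorem in the sequel.

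\emph{Main obstacle.} The essential difference from Lemma~\ref{lem::unique-continuation-V=0} is that $\operatorname{spec}(\hamiltonian_V)$ need not lie in a shifted lattice, so $u_x$ is not periodic in $t$ and there is no conformal transplant of $\mathbb{C}^-$ to the unit disc; the uniform-in-$s$ control must instead come from the contractivity of the complexified Schrödinger semigroup — i.e.\ from semiboundedness of $\hamiltonian_V$ — rather than from Parseval's identity on $\torus$. A secondary but indispensable point is that the hypothesis $q\in I_d$ is precisely what guarantees that $\hamiltonian_V$ is semibounded with purely discrete spectrum (the KLMN / form-compact-perturbation input), without which Step~1, and hence the whole argument, is unavailable.
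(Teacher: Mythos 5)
Your proposal is correct and follows essentially the same route as the paper: diagonalize $\hamiltonian_V$, use $V\in L^q_x$ with $q\in I_d$ (via Sobolev embedding / form-boundedness) to see that all but finitely many eigenvalues are positive, apply Tonelli to get $\ell^2$ coefficients for a.e.\ $x$, and conclude that the resulting one-sided exponential series extends holomorphically to $\mathbb{C}^-$ with Hardy-type bounds. You supply more detail than the paper does on the last step (the uniform slice bounds via contractivity of $e^{-s\hamiltonian_V}$), but the argument is the same in substance.
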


\begin{proof}
    Let $\{\lambda_n\}_{n\ge 0}$ be eigenvalues of $\hamiltonian_\Lambda$ with domain $D(\hamiltonian_\Lambda) = H^1_x$.
    By Sobolev embedding, if $f$ is an eigenfunction with respect to eigenvalue $-\lambda$, then
    \begin{equation*}
        \lambda \|f\|_{L^2_x}^2 
        = \langle V f, f\rangle_{L^2_x} + \|\nabla f\|_{L^2_x}^2
        \gtrsim - \|V\|_{L^q_x} \|f\|_{H^1_x}^2,
    \end{equation*}
    from which one induces that, all these eigenvalues, except for a finite number of them, are positive.
    Write $u_0 = \sum_{n \ge 0} f_n$ where $f_n \in L^2_x$ belongs to the eigenspace of $\hamiltonian_V$ with respect to eigenvalue $-\lambda_n$.
    One has
    \begin{equation*}
        \int_{\torus^d} \sum_{n \ge 0} |f_n(x)|^2 \diff x
        = \sum_{n \ge 0} \int_{\torus^d} |f_n(x)|^2 \diff x
        = \|u\|_{L^2_x}^2 < \infty.
    \end{equation*}
    Therefore $(f_n(x))_{n \ge 0} \in \ell^2$ for almost all $x \in \torus^d$.
    For those $x$, the following series extends to a holomorphic function in the Hardy space $H^2(\mathbb{C}^-)$:
    \begin{equation*}
        u_x(t) = \sum_{n \ge 0} f_n(x) e^{-it\lambda_n}.
        \qedhere
    \end{equation*}
\end{proof}

As in the proof of Lemma~\ref{lem::unique-continuation-V=0} using the uniqueness theorem of Lusin and Privalov \cite{LusinPrivalov1925}, we deduce that, if $u$ given by $u(t) = e^{-it\hamiltonian_\Lambda} u_0$ for some $u_0 \in L^2_\Gamma$ vanishes on a space-time domain of positive measure, then there exists $\omega \subset \torus^d$ with $|\omega| > 0$ such that $u$ vanishes almost everywhere on $\mathbb{R} \times \omega$.
Consequently, let $\mathscr{N}$ be the space of all $v_0 \in L^2$ such that $e^{-it\hamiltonian_V} v_0(x) = 0$ for almost all $(t,x) \in \mathbb{R} \times \omega$, then
\begin{equation*}
    \dim \mathscr{N} > 0.
\end{equation*}

\begin{lemma}
    If $\dim \mathscr{N} > 0$, then there exists an eigenfunction of $\hamiltonian_\Lambda$ in $\mathscr{N}$ which vanishes almost everywhere on $\omega$.
\end{lemma}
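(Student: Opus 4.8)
The plan is to carry out the abstract uniqueness-compactness argument of Bardos--Lebeau--Rauch \cite{BardosLebeauRauch1992}: first establish that $\mathscr{N}$ is a nonzero closed subspace of $L^2_x$ invariant under the unitary group $(e^{-it\hamiltonian_\Lambda})_{t\in\mathbb{R}}$, and then use the discreteness of the spectrum of $\hamiltonian_\Lambda$ to extract an eigenfunction from $\mathscr{N}$.

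First I would verify that $\mathscr{N}$ is closed. For each $T>0$, the map $v_0 \mapsto \bm{1}_\omega\, e^{-it\hamiltonian_\Lambda}v_0$ is bounded from $L^2_x$ into $L^2_{t,x}([-T,T]\times\torus^d)$, with norm $\lesssim \sqrt T$, because $e^{-it\hamiltonian_\Lambda}$ is unitary on $L^2_x$; hence its kernel is closed, and $\mathscr{N}$ is the intersection of these kernels over $T\in\mathbb{N}$. Next I would verify invariance: given $v_0\in\mathscr{N}$ and $s\in\mathbb{R}$, one has $e^{-it\hamiltonian_\Lambda}\bigl(e^{-is\hamiltonian_\Lambda}v_0\bigr)(x)=e^{-i(t+s)\hamiltonian_\Lambda}v_0(x)$, and since the set $\{(t,x)\in\mathbb{R}\times\omega : e^{-it\hamiltonian_\Lambda}v_0(x)\neq 0\}$ is Lebesgue-null, its translate by $-s$ in the time variable is again null; therefore $e^{-is\hamiltonian_\Lambda}v_0\in\mathscr{N}$, so $\mathscr{N}$ is a closed $(e^{-is\hamiltonian_\Lambda})_{s\in\mathbb{R}}$-invariant subspace.

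I would then invoke the spectral decomposition of $\hamiltonian_\Lambda$, which has compact resolvent; let $(\mu_j)_j$ be its distinct eigenvalues and $P_j$ the associated orthogonal spectral projections, so that, by the mean ergodic theorem, $P_j v$ is the strong limit of $\tfrac{1}{2T}\int_{-T}^{T}e^{it\mu_j}e^{-it\hamiltonian_\Lambda}v\,\diff t$ as $T\to\infty$, for every $v\in L^2_x$. For $v_0\in\mathscr{N}$, the continuous $L^2_x$-valued function $t\mapsto e^{it\mu_j}e^{-it\hamiltonian_\Lambda}v_0$ takes its values in the closed subspace $\mathscr{N}$, so each Cesàro average lies in $\mathscr{N}$, and hence so does the limit $P_j v_0$. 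Since $\dim\mathscr{N}>0$, fix a nonzero $v_0\in\mathscr{N}$; then $v_0=\sum_j P_j v_0$ forces $P_j v_0\neq 0$ for some $j$, and $f\coloneqq P_j v_0$ is an eigenfunction of $\hamiltonian_\Lambda$ belonging to $\mathscr{N}$.

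It then remains to observe that $f\in\mathscr{N}$ says precisely $e^{-it\mu_j}f(x)=e^{-it\hamiltonian_\Lambda}f(x)=0$ for a.e.\ $(t,x)\in\mathbb{R}\times\omega$; by Fubini, for a.e.\ $x\in\omega$ the function $t\mapsto e^{-it\mu_j}f(x)$ vanishes for a.e.\ $t$, and since $|e^{-it\mu_j}|=1$ this gives $f(x)=0$ for a.e.\ $x\in\omega$, as desired. The only point requiring a little care is the stability of $\mathscr{N}$ under the spectral projections $P_j$, i.e.\ the elementary fact that a closed subspace invariant under a unitary group with pure point spectrum decomposes along the eigenspaces; everything else is routine. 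Note in particular that, in contrast with the classical compactness argument, one does not need $\mathscr{N}$ to be finite-dimensional here: that role is played by the hypothesis $\dim\mathscr{N}>0$ together with the discreteness of $\operatorname{spec}(\hamiltonian_\Lambda)$.
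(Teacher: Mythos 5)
Your proof is correct, but it follows a genuinely different route from the paper's. The paper runs the classical Bardos--Lebeau--Rauch scheme: it feeds $\mathscr{N}$ (restricted to $L^2_\Gamma$) into the weak observability estimate \eqref{eq::weak-observability} to get $\|\Pi_{B_\Gamma}u_0\|_{L^2_x}\gtrsim\|u_0\|_{L^2_x}$ on $\mathscr{N}_\Gamma$, deduces $\dim\mathscr{N}_\Gamma<\infty$ from the compactness of $\Pi_{B_\Gamma}$ via the Riesz lemma, then shows by a difference-quotient argument that $\hamiltonian_\Lambda$ maps $\mathscr{N}_\Gamma$ into itself and extracts an eigenvector by finite-dimensional linear algebra. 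You instead bypass the finite-dimensionality entirely: after checking that $\mathscr{N}$ is a closed subspace invariant under the unitary group, you use the von Neumann mean ergodic theorem to show that each spectral projection $P_j$ preserves $\mathscr{N}$ (the Cesàro averages of $e^{it\mu_j}e^{-it\hamiltonian_\Lambda}v_0$ stay in the closed subspace), and the discreteness of $\operatorname{spec}(\hamiltonian_\Lambda)$ --- which the paper itself uses in Lemma~\ref{lem::schrodinger-holomorphic} and which follows from $V\in L^q_x$, $q\in I_d$, via the compact embedding $H^1_x\hookrightarrow L^2_x$ --- gives $v_0=\sum_jP_jv_0$, forcing some $P_jv_0\neq0$. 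Your argument is more self-contained (it does not invoke Proposition~\ref{prop::weak-observability} at all) and slightly more general in that it only needs pure point spectrum; the paper's version is the one that survives in settings where the spectrum need not be discrete but a compactness estimate of the form \eqref{eq::weak-observability} is available, and it yields the finite-dimensionality of $\mathscr{N}_\Gamma$ as a byproduct. Your final Fubini step recovering $f=0$ a.e.\ on $\omega$ matches what the paper implicitly uses. Both proofs are valid.
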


\begin{proof}
    By the weak observability \eqref{eq::weak-observability}, we have then, for all $u_0 \in \mathscr{N}$,
    \begin{equation*}
        \|\Pi_{B_\Gamma} u_0\|_{L^2_x} \gtrsim_{d,\chi,V} \|u_0\|_{L^2_x},
    \end{equation*}
    where $B_\Gamma$ is a finite subset of $\Gamma$.
    Since $\Pi_{B_\Gamma}$ is a compact operator on $L^2_x$, the Riesz lemma implies that $\dim \mathscr{N}_\Gamma < \infty$.
    Now that $\mathscr{N}_\Gamma$ is invariant by $e^{-it\hamiltonian_\Lambda}$ for all $t \in \mathbb{R}$, if $u_0 \in \mathscr{N}_\Gamma$ and $\epsilon > 0$, then one has
    \begin{equation*}
        f_\epsilon 
        \coloneqq \frac{(e^{-i\epsilon \hamiltonian_\Lambda} - 1) u_0}{-i \epsilon}
        = \frac{1}{\epsilon} \int_0^\epsilon \hamiltonian_\Lambda e^{-it \hamiltonian_\Lambda} u_0 \diff t \in \mathscr{N}_\Gamma.
    \end{equation*}
    Define the norm $\|f\|_{*} = \| (1+\hamiltonian_\Lambda)^{-1} f\|_{L^2_x}$.
    Then
    \begin{align*}
        \|f_\epsilon - \hamiltonian_\Lambda u_0\|_{*}
        & \le \frac{1}{\epsilon} \int_0^\epsilon \|(1+\hamiltonian_\Lambda)^{-1} \mathcal{H}_\Lambda (e^{-it \hamiltonian_\Lambda}-1) u_0\|_{L^2_x} \diff t \\
        & \le \|(e^{-it \hamiltonian_\Lambda}-1) u_0\|_{L^\infty_t((0,\epsilon);L^2_x)}
        = o(1)_{\epsilon \to 0},
    \end{align*}
    due to the time continuity of the propagator $e^{-it\hamiltonian_\Lambda}$.
    Therefore $(f_\epsilon)_{0<\epsilon<1}$ is a Cauchy sequence  with respect to $\|\cdot\|_{*}$ and its limit is equal to $\hamiltonian_\Lambda u_0$.
    This implies that, if $u_0 \in \mathscr{N}_\Gamma$, then $\hamiltonian_\Lambda u_0 \in \mathscr{N}_\Gamma$.
    Hence $\mathcal{H}_\Lambda$ restricts to a $\mathbb{C}$-linear map on $\mathscr{N}_\Gamma$.
    Since $\mathscr{N}_\Gamma$ is finite dimensional, $\hamiltonian_\Lambda$ must admit an eigenvector in $\mathscr{N}_\Gamma$.
\end{proof}

To finish the proof of Theorem~\ref{thm::main-general}, it remains to establish a unique continuation result which shows that such eigenfunctions do not exist.
This is Theorem~\ref{thm::unique-continuation}.

\appendix

\section{A Unique Continuation Result}

\label{sec::unique-continuation}

We survey and study the unique continuation property of the Schrödinger operator $-\Delta + V$ on $\torus^d$ where $V \in L^q_x$ with $q \in I_d$ (although the analysis works for general elliptic operators on general manifolds, see \cite{FigueiredoGossez1992unique}).
We consider the equation
\begin{equation}
    \label{eq::elliptic}
    (-\Delta + V) u = 0.
\end{equation}

Our purpose is to prove the following theorem.
\begin{theorem}
    \label{thm::unique-continuation}
    If a weak solution $u \in H^1_x$ to the equation \eqref{eq::elliptic} vanishes on a set of positive Lebesgue measure, then $u=0$.
\end{theorem}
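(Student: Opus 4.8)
The plan is to reduce Theorem~\ref{thm::unique-continuation} to the strong unique continuation property (SUCP) for $-\Delta+V$ with $V$ in the critical space $L^{d/2}_\loc$ when $d\ge 3$ and in $L^{1+}_\loc$ when $d=2$ --- equivalently, to the unique continuation theorem of Figueiredo and Gossez~\cite{FigueiredoGossez1992unique}, whose hypotheses on $V$ match ours through the interval $I_d$. The case $d=1$ is elementary: $u''=Vu$ with $V\in L^1_x$ and $u\in H^1_x\hookrightarrow C^0$ gives $u\in C^1$, so at a density point $x_0$ of the zero set $E=u^{-1}(0)$ one has $u(x_0)=0$ and, since $u'=0$ a.e.\ on $E$ and $u'$ is continuous, also $u'(x_0)=0$; writing $u(x)=\int_{x_0}^{x}(x-s)V(s)u(s)\,\diff s$ and applying Gronwall's inequality forces $u\equiv 0$ near $x_0$, and ODE uniqueness continues this to all of $\torus$. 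For $d\ge 2$, a standard chain-of-balls argument shows that local SUCP together with connectedness of $\torus^d$ propagates "$u\equiv 0$ near some point" to "$u\equiv 0$ on $\torus^d$"; so it suffices to work near a single point.

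The key ingredient is the local statement: if $u$ solves $(-\Delta+V)u=0$ near $x_0$, $V\in L^{d/2}_\loc$ with $d\ge 3$, and $u$ vanishes to infinite order at $x_0$ in the $L^2$-averaged sense, $\int_{B_r(x_0)}\abs{u}^2=O_N(r^N)$ for every $N$, then $u\equiv 0$ near $x_0$. I would cite Jerison and Kenig's theorem for this; for completeness one could recall that its proof runs through the Carleman inequalities $\bigl\|\,\abs{x}^{-\sigma}w\,\bigr\|_{L^{p'}_x}\lesssim\bigl\|\,\abs{x}^{-\sigma}\Delta w\,\bigr\|_{L^{p}_x}$ with $(p,p')$ a Stein--Tomas dual pair and $\sigma$ outside a discrete exceptional set, the point being that the potential is absorbed via $\bigl\|\,\abs{x}^{-\sigma}Vu\,\bigr\|_{L^{p}_x}\le\norm{V}_{L^{d/2}_x}\bigl\|\,\abs{x}^{-\sigma}u\,\bigr\|_{L^{p'}_x}$, where the H\"older exponents match \emph{exactly} at the critical index $q=d/2$; for $d=2$ the corresponding two-dimensional estimate works for $V\in L^{1+\epsilon}_\loc$, covering $I_2=(1,\infty]$.

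It remains to deduce infinite-order vanishing at a point from the hypothesis that $u$ vanishes on a set $E$ of positive measure. I would first bootstrap: since $u\in H^1_x$ and $V\in L^q_x$ with $q\in I_d$, H\"older and Sobolev embedding give $Vu\in L^r_x$ for some $r>1$, so $u\in W^{2,r}_\loc$ by Calder\'on--Zygmund and, after one further step, $u\in W^{2,p}_\loc$ for some $p>1$ with $\nabla u$ in a high $L^s_\loc$; iterating the standard fact that the weak gradient of a Sobolev function vanishes a.e.\ on its zero level set then gives $\nabla u=0$ and $D^2 u=0$ a.e.\ on $E$. Choosing $x_0$ to be a density point of $E$ which is also a Lebesgue point of $u$, $\nabla u$ and $D^2u$, all three vanish at $x_0$; the quantitative form of the Carleman estimate yields a doubling inequality $\int_{B_{2r}(x_0)}\abs{u}^2\le 2^{N}\int_{B_{r}(x_0)}\abs{u}^2$ with a \emph{finite} index $N=N(u)$ as long as $u$ is nontrivial near $x_0$, and such an $N$ is incompatible with $\abs{B_r(x_0)\setminus E}=o(r^d)$ combined with the vanishing of $u$ and its first two derivatives on $E$ and the Sobolev bound on $u$; hence $u$ vanishes to infinite order at $x_0$ and the local SUCP applies, completing the proof.

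The step I expect to be the genuine obstacle is this last reduction in the \emph{critical} case $d\ge 3$, $q=d/2$: one needs a doubling (equivalently three-balls) inequality that remains valid uniformly down to the exponent $d/2$, which is precisely the technical heart of the Jerison--Kenig and Figueiredo--Gossez arguments --- for $q>d/2$ the matter is much softer, since De Giorgi--Nash--Moser theory then makes $u$ locally bounded. The cleanest write-up therefore carries out the $d=1$ case and the Sobolev bootstrap explicitly and quotes~\cite{FigueiredoGossez1992unique} for the rest.
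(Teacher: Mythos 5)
Your overall architecture (density point of the zero set $\Rightarrow$ infinite‑order vanishing $\Rightarrow$ strong unique continuation à la Jerison--Kenig / Figueiredo--Gossez) is the same as the paper's, and your elementary ODE treatment of $d=1$ is a perfectly good substitute for the unified argument. But the mechanism you propose for the middle step when $d\ge 2$ does not close. Knowing that $u$, $\nabla u$ and $D^2u$ vanish a.e.\ on $E$ and that $x_0$ is a common Lebesgue point only yields vanishing of a \emph{fixed finite} order at $x_0$ (roughly $\int_{B_r}\abs{u}^2=o(r^{d+4})$), and a fixed finite order cannot contradict a doubling inequality whose index $N(u)$ is an arbitrary finite number; so the claimed ``incompatibility'' is not there. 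What actually makes the density of $E$ bite is a \emph{local} reverse‑Hölder estimate that you never write down: the Caccioppoli‑type bound $r\|\nabla u\|_{L^2(B_r)}\lesssim\|u\|_{L^2(B_{\sqrt 2 r})}$ (this is \eqref{eq::doubling-est-poincare}, essentially Lemma~1 of \cite{FigueiredoGossez1992unique}) combined with Sobolev/Gagliardo--Nirenberg to give $\|u\|_{L^p(B_r)}\lesssim r^{-\alpha}\|u\|_{L^2(B_{2r})}$ for some $p>2$, $\alpha=d(1/2-1/p)$. Then Hölder on $B_r\setminus E$, whose measure is $\le\epsilon\abs{B_r}$, gives $\|u\|_{L^2(B_r)}\le C\epsilon^{\alpha/d}\|u\|_{L^2(B_{2r})}$ with $\epsilon\to 0$ as $r\to 0$; iterating this self‑improving inequality yields $O(r^N)$ for every $N$ directly, with no doubling inequality and no vanishing of derivatives needed. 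A global Sobolev bound on $u$ is useless here because it does not localize the $L^p$ mass of $u$ on $B_r$ back to $\|u\|_{L^2(B_{2r})}$ with the correct power of $r$.

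A second, related gap: you propose to ``quote \cite{FigueiredoGossez1992unique} for the rest,'' but that reference only covers $d\ge 3$, where the critical Sobolev embedding $H^1\hookrightarrow L^{2d/(d-2)}$ supplies the reverse‑Hölder step with $\alpha=1$. For $d=2$ (where $I_2=(1,\infty]$ and the embedding degenerates) the reduction from positive‑measure vanishing to infinite‑order vanishing is exactly what needs a new argument; the paper supplies it by replacing Sobolev embedding with the Gagliardo--Nirenberg interpolation $\|\varphi_r u\|_{L^p}\lesssim\|\nabla(\varphi_r u)\|_{L^2}^{\alpha}\|\varphi_r u\|_{L^2}^{1-\alpha}$, which restores the exact cancellation between the factor $r^{-\alpha}$ and $\abs{B_r\setminus E}^{\alpha/d}=(\epsilon r^d)^{\alpha/d}$. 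Your remark about two‑dimensional Carleman estimates for $V\in L^{1+\epsilon}_x$ addresses the SUCP half of the argument, not this reduction. Fixing the proof therefore amounts to inserting the Caccioppoli plus Gagliardo--Nirenberg iteration in place of the doubling/derivative‑vanishing step, for all $d\ge 2$.
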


When $d \ge 3$, this result was established by De Figueiredo and Gossez \cite{FigueiredoGossez1992unique} via the following observation: 
if $x_0$ is a Lebesgue point of the null set of $u$, the $u$ vanishes of infinite order at $x_0$; precisely, letting  $B_r$ be the ball of radius $r$ centered at $x_0$, then for all $N > 0$, there holds
\begin{equation*}
    \|u\|_{L^2_x(B_r)} \lesssim_{d,V,N} O(r^N)_{r \to 0}.
\end{equation*}

Theorem~\ref{thm::unique-continuation} then follows from a classic unique continuation result (Theorem~\ref{thm::unique-continuation-point} below) which dates back to Carleman \cite{Carleman1939unicite} and Aronszajn \cite{Aronszajn1957unique} for bounded potentials and was extended by Jerison and Kenig \cite{JerisonKenig1985unique} and Koch and Tataru \cite{KochTataru2001} to more general potentials.
See e.g., \cite{KochTataru2001} for a historical review.

\begin{theorem}
\label{thm::unique-continuation-point}
    If a weak solution $u \in H^1_x$ to the equation \eqref{eq::elliptic} vanishes of infinite order at some $x_0 \in \torus^d$, then $u=0$.
\end{theorem}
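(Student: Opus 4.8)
The plan is to deduce the statement from a weighted $L^{p'}\to L^{p}$ Carleman inequality with radial weights, in the tradition of Carleman, Aronszajn, Jerison--Kenig and Koch--Tataru. By the translation invariance of $\torus^d$ we may assume $x_0=0$ and, identifying a small ball $B_{r_0}$ with a ball in $\mathbb{R}^d$, reduce to a local statement. The analytic core, for $d\ge 3$, is the estimate: there exist a constant $C$ and a sequence $\tau_j\to\infty$ (chosen to avoid resonances with spherical harmonics) such that
\[
    \bigl\||x|^{-\tau_j} w\bigr\|_{L^{2d/(d-2)}(\mathbb{R}^d)}
    \le C\,\bigl\||x|^{-\tau_j}\Delta w\bigr\|_{L^{2d/(d+2)}(\mathbb{R}^d)},
    \qquad w\in C^\infty_c(\mathbb{R}^d\setminus 0),
\]
where the exponent pair is scale invariant and is exactly the Stein--Tomas dual pair. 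For $d=1$ the statement reduces to an elementary Gr\"onwall argument for the ODE $-u''=-Vu$ with $V\in L^1$, and for $d=2$ one replaces the displayed inequality by the Jerison--Kenig-type Carleman estimate valid for a pair $(p',p)$ with $1/p'-1/p$ matching the integrability gap $2/q$ in the range $q\in I_2=(1,\infty]$.

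Next I would apply the Carleman inequality with $w=\chi u$, where $\chi\in C^\infty_c(B_{r_0})$ equals $1$ on $B_{r_0/2}$; since $\Delta u=-Vu$ we get $\Delta w=-\chi Vu+[\Delta,\chi]u$, with the commutator supported in the annulus $\{r_0/2<|x|<r_0\}$. On the potential term, H\"older's inequality (together with the inclusion $L^q(B_{r_0})\hookrightarrow L^{d/2}(B_{r_0})$ when $q>d/2$, which even produces an extra power of $r_0$) yields
\[
    \bigl\||x|^{-\tau_j}\chi Vu\bigr\|_{L^{p'}}\le C'\,\|V\|_{L^q(B_{r_0})}\,\bigl\||x|^{-\tau_j}\chi u\bigr\|_{L^{p}},
\]
so after shrinking $r_0$ so that $CC'\|V\|_{L^q(B_{r_0})}\le\tfrac12$ this term is absorbed into the left-hand side. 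The commutator term is bounded by $(r_0/2)^{-\tau_j}$ times a fixed constant depending on $\chi$ and $\|u\|_{H^1_x}$, using $L^2\hookrightarrow L^{p'}$ on the annulus and $u\in H^1_x$. (Strictly, $w=\chi u$ is not supported away from the origin; one first inserts an inner cutoff vanishing near $0$, and the hypothesis that $u$ vanishes to infinite order at $0$ — together with Caccioppoli for $\nabla u$ — lets the inner cutoff tend to zero without penalty.) Restricting the left-hand integral to $B_\rho$ with $\rho<r_0/4$, where $|x|^{-\tau_j}\ge\rho^{-\tau_j}$, gives
\[
    \|u\|_{L^p(B_\rho)}\le C''(\chi)\,(4\rho/r_0)^{\tau_j}\,\|u\|_{H^1_x}
    \xrightarrow[j\to\infty]{} 0,
\]
so $u\equiv 0$ on $B_\rho$.

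Finally I would globalize by connectedness. The set $Z_\infty\subset\torus^d$ of points at which $u$ vanishes to infinite order is closed (it is cut out by countably many closed conditions), nonempty since $x_0\in Z_\infty$, and open by the local statement just proved: if $u$ vanishes to infinite order at a point, then $u$ vanishes identically on a neighborhood of that point, so every nearby point also lies in $Z_\infty$. Since $\torus^d$ is connected, $Z_\infty=\torus^d$, hence $u\equiv 0$. (This is also precisely the step that lets Theorem~\ref{thm::unique-continuation} follow, via the De Figueiredo--Gossez observation that vanishing on a positive-measure set forces infinite-order vanishing at any Lebesgue point of the null set.)

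The main obstacle is the Carleman inequality itself at the endpoint $q=d/2$: the uniformity of $C$ as $\tau_j\to\infty$ is delicate and rests on sharp ($L^{p'}\to L^p$, Stein--Tomas) bounds for the Laplacian conjugated by $|x|^{-\tau}$, equivalently on a spherical-harmonic decomposition with careful control of the resonant frequencies $\tau_j$ one must avoid; the low-dimensional cases $d=1,2$ need their own, non-$L^2$-based, arguments. Everything else — the H\"older absorption of the potential, the commutator estimate, the limiting argument in $\tau$, and the connectedness step — is routine, and is why the paper is content to cite this as a classical input.
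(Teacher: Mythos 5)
The paper offers no proof of this statement: it is quoted verbatim as a classical black box, attributed to Carleman \cite{Carleman1939unicite} and Aronszajn \cite{Aronszajn1957unique} for bounded potentials and to Jerison--Kenig \cite{JerisonKenig1985unique} and Koch--Tataru \cite{KochTataru2001} for $V\in L^q_x$ with $q\in I_d$; the appendix only proves the reduction of Theorem~\ref{thm::unique-continuation} to it via the De Figueiredo--Gossez observation. Your sketch is therefore not competing with an in-paper argument, but it is a faithful reconstruction of the standard Jerison--Kenig scheme: the weighted $L^{2d/(d+2)}\to L^{2d/(d-2)}$ Carleman inequality with weights $|x|^{-\tau_j}$ and $\tau_j$ avoiding a discrete resonant set, H\"older absorption of $Vu$ using the smallness of $\|V\|_{L^{d/2}(B_{r_0})}$ for small $r_0$, the annular commutator term, removal of the inner cutoff via the infinite-order vanishing hypothesis, and the open-closed globalization. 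You also correctly locate the entire analytic difficulty in the uniform-in-$\tau$ Carleman estimate, which you assert rather than prove --- so your write-up ultimately defers to \cite{JerisonKenig1985unique,KochTataru2001} exactly as the paper does. Two minor corrections. First, the H\"older step requires $1/p'-1/p=1/q$ (for $d\ge3$, $q=d/2$ this equals $2/d$ and is realized by the Stein--Tomas pair), not $2/q$ as you wrote for $d=2$; with $q>1$ in dimension two the required gap $1/q<1$ is attainable and the corresponding Carleman estimates are available in the literature. Second, the set $Z_\infty$ of infinite-order vanishing points is not obviously closed by being ``cut out by countably many closed conditions'' (each condition involves a limit as $r\to0$ at a varying center and need not be closed in the center variable); the standard closedness argument instead uses that your local conclusion produces a vanishing ball of locally uniform radius (depending only on $\|V\|_{L^q}$ near the point), so that a limit of points of $Z_\infty$ eventually lies in the interior of one of these balls. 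Neither point affects the validity of the overall scheme.
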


To prove Theorem~\ref{thm::unique-continuation} for $d=1,2$, it suffices to show the above observation still holds.
It suffices to modify the proof in \cite{FigueiredoGossez1992unique} by replacing a Sobolev embedding estimate with a Gagliardo--Nirenberg estimate.

\begin{proof}[Proof of Theorem~\ref{thm::unique-continuation}]
    By Sobolev embedding and a classic elliptic estimate, the following estimate (which is essentially \cite[Lem.~1]{FigueiredoGossez1992unique} with $B_{2r}$ replaced with $B_{\sqrt{2}r}$ for later convenience) holds regardless of the dimension:
    \begin{equation}
        \label{eq::doubling-est-poincare}
        r \|\nabla u\|_{B_r} \lesssim_{d,V} \|u\|_{B_{\sqrt{2}r}}.
    \end{equation}
    
    Let $\varphi \in C_c^\infty(\torus^d)$ be such that $\bm{1}_{|x| \le 1} \le \varphi(x) \le \bm{1}_{|x| \le \sqrt{2}}$, and set $\varphi_r(x) = \varphi(\frac{x-x_0}{r})$.
    Let $p \in (2,p_0]$, where $p_0 = \infty$ for $d=1,2$ and $p_0 \in (2,\frac{2d}{d-2}]$ for $d \ge 3$ and let $\alpha = d(\frac{1}{2} - \frac{1}{p}) \in (0,1]$.
    By the Gagliardo--Nirenberg inequality and \eqref{eq::doubling-est-poincare},
    \begin{align*}
        \|u\|_{L^p_x(B_r)}
        & \le \|\varphi u\|_{L^p_x}
        \lesssim_{d,p} \|\nabla(\varphi_r u)\|_{L^2_x}^\alpha \|\varphi u\|_{L^2_x}^{1-\alpha} \\
        & \lesssim_{d,p,\varphi} \Bigl( \|\nabla u\|_{L^2_x(B_{\sqrt{2}r})} + \frac{1}{r} \|u\|_{L^2_x(B_{\sqrt{2}r})} \Bigr)^\alpha \|u\|_{L^2_x(B_{\sqrt{2}r})}^{1-\alpha} \\
        & \lesssim_{d,p,\varphi} \frac{1}{r^\alpha} \|u\|_{L^2_x(B_{2r})}^\alpha \|u\|_{L^2_x(B_{2r})}^{1-\alpha}.
    \end{align*}

    Since $x_0$ is a Lebesgue point of $u^{-1}(0)$, for all $\epsilon > 0$, there exists $E_\epsilon \subset \torus^d$ and $r_\epsilon > 0$ such that $|B_r \setminus E_\epsilon| < \epsilon |B_r|$ for all $r \in (0,r_\epsilon)$.
    For such $r$, we apply Hölder's inequality and the above estimate.
    Then we obtain that
    \begin{align*}
        \|u\|_{L^2_x(B_r)}
        & = \|u\|_{L^2_x(B_r \setminus E_\epsilon)}
        \lesssim \|u\|_{L^r_x(B_r \setminus E_\epsilon)} |B_r \setminus E_\epsilon|^{1/2-1/r} \\
        & = \|u\|_{L^r_x(B_r)} |B_r \setminus E_\epsilon|^{\alpha / d}
        \lesssim_{d,r} \frac{1}{r^\alpha} \|u\|_{L^2_x(B_{2r})}^\alpha \|u\|_{L^2_x(B_{2r})}^{1-\alpha} (\epsilon r^d)^{\alpha / d} \\
        & = \epsilon^{\alpha / d} \|u\|_{L^2_x(B_{2r})}.
    \end{align*}
    To conclude, we proceed verbatim as in \cite{FigueiredoGossez1992unique}.
\end{proof}

\printbibliography

\end{document}